\documentclass[11pt, a4paper, UKenglish]{article}
\usepackage[UKenglish]{babel}
\usepackage[utf8]{inputenc}
\usepackage{amssymb}
\usepackage{amsmath}
\usepackage{amsthm}
\usepackage{tikz}
\usepackage{tikz-cd}

\newcommand{\nospacepunct}[1]{\makebox[0pt][l]{\,,}}
\usepackage{quiver}
\usepackage{hyperref}
\newtheorem{thm}{Theorem}[section]
\newtheorem{prop}[thm]{Proposition}
\newtheorem{lem}[thm]{Lemma}
\newtheorem{cor}[thm]{Corollary}
\theoremstyle{definition}

\theoremstyle{remark}

\theoremstyle{construction}

\theoremstyle{definition}

\usepackage{atbegshi,picture}
\usepackage{graphicx}
\begin{document}
	\title{
		{Motivic Toda brackets II}\\
	}
	\author{Xiaowen Dong\footnote{\href{mailto:xidong@uni-mainz.de}{xidong@uni-mainz.de}}}
	\maketitle
	\begin{abstract}
		We construct more non-trivial examples for Toda brackets in unstable motivic homotopy theory via the first and second motivic Hopf maps $\eta$ and $\nu$.
	\end{abstract}

		\tableofcontents
	
	\section{Introduction}
	\
	
This paper is a continuation of the author's doctoral thesis \cite{dong2024motivictodabrackets}. We  construct in the present article more examples of unstable motivic Toda brackets via the motivic Hopf maps $\eta$ and $\nu$.\\

First we fix some conventions. Throughout the paper we work entirely over the base $\mathrm{Spec}\mathbb{Z}$. Let $\mathcal{S}\mathrm{m}_{\mathbb{Z}}$ denote the category of smooth schemes of finite type over $\mathbb{Z}$. This category is equipped with the Nisnevich topology in the sense of \cite{Morel1999A1homotopyTO}. The category of motivic spaces $\mathrm{sPre}(\mathbb{Z})$ is the category of simplicial presheaves on $\mathcal{S}\mathrm{m}_{\mathbb{Z}}$. The category of pointed motivic spaces is the category of pointed simplicial presheaves on $\mathcal{S}\mathrm{m}_{\mathbb{Z}}$. It is denoted by $\mathrm{sPre}(\mathbb{Z})_{\ast}$. For the purposes of this paper we will use the $\mathbb{A}^1$-local injective model structure on  $\mathrm{sPre}(\mathbb{Z})_{\ast}$ which is developed in \cite{Morel1999A1homotopyTO}.\\

Since we want to construct examples uisng motivic spheres, we also recall here the definition of motivic spheres. Let $s, w\geq 0$ be integers. We define $S^{s+(w)}$ to be the pointed simplicial presheaf $S^{s}\wedge \mathbb{G}_{m}^w $ where $S^{s}$ is the smash product $\underbrace{S^1\wedge ... \wedge S^1}_{s \  times}$ of the simplicial circle $S^1= \Delta^1 /\partial\Delta^1$. The Tate circle $\mathbb{G}_{m} $ is $\mathrm{Spec}\  \mathbb{Z}[t]_{(t)}$. It is based at 1. We call $s$ the degree and $w$ the weight of $S^{s+(w)}$. Suspension from the right with $\mathbb{G}_{m} $ increases the weight $(w)$ by 1. Suspension from the left by the simplicial circle $S^1$ increases the degree $s$ by 1. Let $\mathcal{E}$ be an arbitrary pointed motivic space in $ \mathrm{sPre}(\mathbb{Z})_{\ast}$. Then we set $\pi_{s+(w)}\mathcal{E}$ to be the group $\mathcal{H}_{\bullet}(\mathbb{Z})(S^{s+(w)}, \mathcal{E}) $ for $s>0$ and $w\geq 0$. In particular, maps from spheres to spheres are always indexed by the bidegree of the target.\\

More precisely we would like to prove some unstable null-Hopf relations. Using these unstable null-Hopf relations we can then construct the corresponding motivic Toda brackets. Stable null-Hopf relations were already studied by Dugger and Isaksen in \cite{motihopf}. In order to state their results we recall the definition of the motivic Hopf elements $\eta$ and $\nu$. For two pointed motivic spaces $\mathcal{X}$ and $\mathcal{Y}$ the reduced join $\mathcal{X}\ast \mathcal{Y}$ is defined to be the quotient of $\Delta^1\times \mathcal{X} \times \mathcal{Y}  $ by the relations $(0,x,y)=(0,x,y'), (1,x,y)=(1,x',y)$ and $(t,x_0,y_0)=(s,x_0, y_0)$ where $x_0$ (resp.$y_0$) is the base point of $\mathcal{X}$ (resp.$\mathcal{Y}$). It is also the homotopy limit of the diagram of pointed spaces 
\[\begin{tikzcd}
	{\mathcal{X}\times \mathcal{Y}} & {\mathcal{X}} \\
	{\mathcal{Y}} & {}
	\arrow[from=1-1, to=1-2]
	\arrow[from=1-1, to=2-1]
\end{tikzcd}\]
Let $C(\mathcal{X})$ denote the cone $\Delta^1\wedge \mathcal{X}$ where $\Delta^1$ is based at 1. Let $C'(\mathcal{Y})$ denote $\Delta^1\wedge \mathcal{X}$ where $\Delta^1$ is based here at 0. There are canonical inclusions from $C(\mathcal{X})$ and $C'(\mathcal{Y})$ into $\mathcal{X}\ast \mathcal{Y}$. The projection $\mathcal{X}\ast \mathcal{Y}\rightarrow \mathcal{X}\ast \mathcal{Y}/(C(\mathcal{X})\vee C'(\mathcal{Y}))=\Sigma(\mathcal{X}\wedge \mathcal{Y}) $ is a motivic weak equivalence where $\Sigma (\mathcal{X}\wedge \mathcal{Y})$ is just the simplicial suspension $S^1\wedge \mathcal{X}\wedge \mathcal{Y}$. Let $f: X\times Y\rightarrow Z$ be a morphism of pointed motivic spaces. Then it induces a morphism from the reduced join $X\ast Y$ to $\Sigma Z$. This canonical weak equivalence can also be written as the following composite of the canonical projections \[\begin{tikzcd}
	{\mathcal{X}\ast \mathcal{Y}} & {\Sigma(\mathcal{X}\times \mathcal{Y})} & {\Sigma(\mathcal{X}\wedge \mathcal{Y})} & \cdot
	\arrow[from=1-1, to=1-2]
	\arrow[from=1-2, to=1-3]
\end{tikzcd}\]\\

The Hopf construction $H(f)$ of $f$ is the following composition in the pointed motivic homotopy category $\mathcal{H}_{\bullet}(\mathbb{Z})$ 
\[\begin{tikzcd}
	{\Sigma(X\wedge Y)} & {X\ast Y} & {\Sigma Z} 
	\arrow["\cong", from=1-1, to=1-2]
	\arrow[from=1-2, to=1-3]
\end{tikzcd}\]
where the first morphism in the composition is the inverse of the projection from $\mathcal{X}\ast \mathcal{Y}$ to $\Sigma(\mathcal{X}\wedge \mathcal{Y})$. The first motivic Hopf map $\eta_{1+(1)}$ is defined to be the Hopf construction of the pointed morphism $\mathbb{G}_{m}\times \mathbb{G}_{m}\rightarrow \mathbb{G}_{m}; (x,y)\mapsto x^{-1}y$.  In $\mathcal{H}_{\bullet}(\mathbb{Z})$ it can be identified with the canonical morphism $\mathbb{A}^2-{0}\rightarrow \mathbb{P}^1; (T_0, T_1)\mapsto [T_0:T_1]$. The second Hopf map $\nu_{2+(2)}$ is the Hopf construction of the multiplication map on $\mathrm{SL}_2$. The special linear group scheme can be identified with the motivic sphere $S^1\wedge \mathbb{G}_{m}^2$.\\

There is a $\mathbb{P}^1$-stabilization functor from the pointed unstable homotopy category $\mathcal{H}_{\bullet}(\mathbb{Z})$ to the stable motivic homotopy category $\mathcal{SH}(\mathbb{Z})$. Recall that maps between motivic spheres are always indexed by the bidegree of the target. Let $\eta$ denote the stablilization of $\eta_{1+(1)}$ and $\nu$ the stabilization of $\nu_{2+(2)}$. We also have the morphism  $\epsilon_{(1)}:\mathbb{G}_{m}\rightarrow \mathbb{G}_{m}; x\mapsto x^{-1}$. Then we define the hyperbolic plane to be $h_{1+(1)}: =1_{1+(1)}-\epsilon_{1+(1)}$ where $1_{1+(1)}$ is the identity map of $S^1\wedge\mathbb{G}_{m} $. Let $\epsilon$ denote the stabilization of $\epsilon_{(1)}$ and $h$ the stabilization of the hyperbolic plane. Morel proved in \cite{morelintro} the  relation $h\eta=\eta h=0$. In \cite{motihopf} Dugger and Isaksen proved the stable null-Hopf relation $\eta\nu=\nu\eta=0$. In the author's doctoral thesis \cite[Proposition 3.4.9]{dong2024motivictodabrackets} the unstable analogue of Morel's relation is proved. The author showed that the two composites $h_{1+(2)}\circ \eta_{1+(2)} $ and $\eta_{1+(2)}\circ h_{1+(3)}$ are $\mathbb{A}^1$-nullhomotopic. In this paper we want to show the unstable analogue of $\eta\nu=\nu\eta=0$. Precisely, we would like to show that the composites $\eta_{3+(2)}\circ \nu_{3+(3)}$ and $\nu_{3+(3)}\circ \eta _{4+(5)}$ are $\mathbb{A}^1$-nullhomotopic. Especially, these are the smallest bidegrees for which the composites are nullhomotopic.\\

Moreover we also prove the relation   $1_{1+(2)}+\eta_{1+(2)}\circ \Delta_{1+(3)}=-\epsilon_{1+(2)}$ which follows from the relation $q_{1+(1)}=1_{1+(1)}-\epsilon_{1+(1)}$. The morphism $q_{(1)}$ is the quadratic map $\mathbb{G}_{m}\rightarrow \mathbb{G}_{m}; x\mapsto x^{-1}$ and $\Delta_{(2)}$ is the diagonal morphism $\mathbb{G}_{m}\rightarrow\mathbb{G}_{m}^2; \ x\mapsto x\wedge x$. The result $q_{1+(1)}=1_{1+(1)}-\epsilon_{1+(1)}$ is again contained in the author's doctoral thesis \cite[Proposition 3.4.3]{dong2024motivictodabrackets}. The relation $1_{1+(2)}+\eta_{1+(2)}\circ \Delta_{1+(3)}=-\epsilon_{1+(2)}$ is the unstable analogue of the classical relation $1+\eta\rho= -\epsilon$ (cf. \cite[Lemma 6.3.4]{morelintro}) where $\rho$ is the stabilization of the diagonal morphism $\Delta_{(2)}$. Dugger and Isaksen also showed in \cite{motihopf} the relation $\epsilon\nu =-\nu$. Using $\epsilon\nu =-\nu$, $1+\eta\rho= -\epsilon$ and $\nu\eta=0$ we get $\nu^2=-\nu^2$. In this paper we show then $\nu_{3+(3)}=-\epsilon_{3+(3)}\circ \nu_{3+(3)}$ and $\nu_{3+(2)}\circ \nu_{4+(4)}= -\nu_{3+(2)}\circ \nu_{4+(4)}$. It follows from these identities that the composite $(1-\epsilon)_{3+(3)}\circ \nu_{3+(3)}\circ \nu_{4+(5)}$ is $\mathbb{A}^1$-nullhomotopic.\\

From the various $\mathbb{A}^1$-nullhomotopic composites we get several Toda brackets. We recall here briefly what Toda bracktes are. In \cite[Section 2.2]{dong2024motivictodabrackets} the author constrcuted Toda brackets in unstable motivic homotopy theory over the base scheme $\mathrm{Spec}\mathbb{Z}$. Suppose we are given a sequence of three composable morphisms of pointed motivic spaces \[\begin{tikzcd}
	{\mathcal{W}} & {\mathcal{X}} & {\mathcal{Y}} & {\mathcal{Z}}
	\arrow["\gamma", from=1-1, to=1-2]
	\arrow["\beta", from=1-2, to=1-3]
	\arrow["\alpha", from=1-3, to=1-4]
\end{tikzcd}\] such that the composites $\alpha\circ \beta$ and $\beta\circ \gamma$ are $\mathbb{A}^1$-nullhomotopic. Then we choose a nullhomotopy $A: C(\mathcal{X})=\Delta^1\wedge\mathcal{X}\rightarrow \mathcal{Z}$ for $\alpha\circ \beta$ and a nullhomotopy $B: C(\mathcal{W})=\Delta^1\wedge\mathcal{W}\rightarrow \mathcal{Y}$ for $\beta\circ \gamma$ where $\Delta^1$ is based at 1. In particular we get also the morphisms $\alpha\circ B$ and $A\circ C(\gamma)$, where $C(\gamma)$ is the morphism between the cones induced by $\gamma$. Hence we obtain a morphism \[\begin{tikzcd}
{\Sigma\mathcal{W}} & {C(\mathcal{W})\sqcup_{\mathcal{W}}C(\mathcal{W})} & {\mathcal{Z}}
\arrow["\sim", from=1-1, to=1-2]
\arrow[from=1-2, to=1-3]
\end{tikzcd}\] in the pointed motivic homotopy category. The identification of $\Sigma \mathcal{W}$ with the pushout $ C(\mathcal{W})\sqcup_{\mathcal{W}}C(\mathcal{W})$ is canonical. The Toda bracket $\{\alpha, \beta, \gamma \}\subset \mathcal{H}_{\bullet}(\mathbb{Z})(\Sigma\mathcal{W}, \mathcal{Z})$ is defined to be the set of all morphisms obtained in this way by choosing all possible nullhomotopies for $\alpha\circ \beta$ and $\beta\circ \gamma$.\\

More precisely we construct the Toda brackets $\{\eta_{3+(2)}, \nu_{3+(3)}, \eta _{4+(5)} \},  \\ \{\nu_{3+(3)}, \eta _{4+(5)}, \nu _{4+(6)} \}$ and $\{\eta_{3+(2)}, (1-\epsilon)_{3+(3)}, \nu_{3+(3)}\circ \nu_{4+(5)}\}$. Especially they are not trivial in the sense that they do not contain the homotopy classes of the constant morphisms.\\

We explain briefly the strategy we use  to prove that $\eta_{3+(2)}\circ \nu_{3+(3)}$ is nullhomotopic as the proof is rather technical. The proof is basically a variation of the methods given by Dugger and Isaksen in \cite{motihopf}. Their proof of $\eta\nu=0$ (cf.\cite[Proposition 5.4]{motihopf}) relies on a careful analysis of the morphisms in a commutative diagram and the fact that in ${\mathcal{SH}}(\mathbb{Z})$ the Hopf construction $H(f)$ of a pointed morphism $f: X\times Y\rightarrow Z$ is equal to
\[\begin{tikzcd}
	{X\wedge Y} & {X\times Y} & Z
	\arrow["\chi", from=1-1, to=1-2]
	\arrow["f", from=1-2, to=1-3]
\end{tikzcd}\]
where $\chi$ is a splitting map for the projection $X\times Y\rightarrow  X\wedge Y$. But in $\mathcal{H}_{\bullet}(\mathbb{Z})$ we have only $\Sigma^2H(f)= \Sigma^3f\circ \Sigma^2\chi$ where $\chi: \Sigma(X\wedge Y)\rightarrow \Sigma(X\times Y)$ is again the splitting map for the corresponding projection. Therefore the arguments by Dugger and Isaksen work only after taking two simplicial suspensions. We cannot apply them to $\eta_{3+(2)}\circ \nu_{3+(3)}$. It turns out that we have to work directly with joins.\\

\subsection*{Acknowledgement}
I would like to thank Tom Bachmann and Oliver R\"{o}ndigs for helpful comments.

\section{Technical lemmas}
\

In this section we prove some technical lemmas. Let $\mathbb{P}^1$ be the projective line over $\mathbb{Z}$. It is based at 1. In $\mathcal{H}_{\bullet}(\mathbb{Z})$ there is an isomorphism from $\mathbb{P}^1$ to $S^1\wedge \mathbb{G}_{m}$ (see \cite[p.408]{levine2010slices}).\\

\begin{lem}[{cf. \cite[Lemma 6.6.1]{morelintro}}] \label{lemma 2.1} In $\mathcal{H}_{\bullet}(\mathbb{Z})$ the permutation $\mathbb{P}^1\wedge \mathbb{P}^1\cong \mathbb{P}^1\wedge \mathbb{P}^1 $ corresponds to $-\epsilon_{1+(1)}\wedge \mathrm{id}_{1+(1)}$ if we identify $\mathbb{P}^1$ with $S^1\wedge \mathbb{G}_{m}$ via the isomorphism given in \cite[p.408]{levine2010slices}.\\
\end{lem}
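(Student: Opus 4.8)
The plan is to transport everything across the isomorphism $\mathbb{P}^1\simeq S^1\wedge\mathbb{G}_{m}$ and to reduce the switch of $\mathbb{P}^1\wedge\mathbb{P}^1=S^1\wedge\mathbb{G}_{m}\wedge S^1\wedge\mathbb{G}_{m}$ to switches of the four elementary smash factors $S^1,\mathbb{G}_{m},S^1,\mathbb{G}_{m}$. Explicitly, let $r$ be the canonical isomorphism $S^1\wedge\mathbb{G}_{m}\wedge S^1\wedge\mathbb{G}_{m}\xrightarrow{\ \cong\ }S^1\wedge S^1\wedge\mathbb{G}_{m}\wedge\mathbb{G}_{m}$ interchanging the two inner factors. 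A direct inspection of the underlying maps shows that the switch $\tau_{\mathbb{P}^1,\mathbb{P}^1}$ equals $r^{-1}\circ(\tau_{S^1,S^1}\wedge\tau_{\mathbb{G}_{m},\mathbb{G}_{m}})\circ r$, and since $r$ is an isomorphism this identity persists in $\mathcal{H}_{\bullet}(\mathbb{Z})$. It then remains to evaluate the two elementary switches and to conjugate back by $r$.

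For the simplicial part, $\tau_{S^1,S^1}\colon S^1\wedge S^1\to S^1\wedge S^1$ is the degree $-1$ self-map of $S^2$, hence $\tau_{S^1,S^1}=-\mathrm{id}$ with respect to the cogroup structure coming from the simplicial suspension; this is the classical computation and it holds verbatim in $\mathcal{H}_{\bullet}(\mathbb{Z})$, being a statement about constant simplicial presheaves. For the Tate part, the real content is the identification $\tau_{\mathbb{G}_{m},\mathbb{G}_{m}}=\epsilon_{(1)}\wedge\mathrm{id}_{(1)}$, which I would obtain by rerunning Morel's computation of the $\mathbb{A}^1$-degree of the coordinate switch on $\mathbb{A}^2\smallsetminus 0$ (equivalently, by writing down an explicit $\mathbb{A}^1$-homotopy of pointed endomorphisms of $\mathbb{G}_{m}\wedge\mathbb{G}_{m}$), noting that the argument uses only linear algebra over the base and hence goes through unchanged over $\mathrm{Spec}\,\mathbb{Z}$. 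Putting these together gives $\tau_{S^1,S^1}\wedge\tau_{\mathbb{G}_{m},\mathbb{G}_{m}}=-\bigl(\mathrm{id}_{S^1}\wedge\mathrm{id}_{S^1}\wedge\epsilon_{(1)}\wedge\mathrm{id}_{\mathbb{G}_{m}}\bigr)$ on $S^1\wedge S^1\wedge\mathbb{G}_{m}\wedge\mathbb{G}_{m}$; conjugating by $r$ transports the factor $\epsilon_{(1)}$ onto the Tate circle of the first copy of $\mathbb{P}^1$ and pulls the sign out, yielding exactly $-\epsilon_{1+(1)}\wedge\mathrm{id}_{1+(1)}$.

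The step I expect to be the main obstacle is bookkeeping rather than conceptual: one has to be sure that the rearranging isomorphism $r$ (which involves a mixed switch of $S^1$ past $\mathbb{G}_{m}$) introduces no extra sign. This is resolved by observing that $r$ is a coherence isomorphism of the symmetric monoidal smash product, so conjugating the scalar $-\mathrm{id}$ and the self-map $\epsilon_{(1)}\wedge\mathrm{id}$ by it merely relabels the smash factors without altering the homotopy class. The only genuinely non-formal ingredient is the computation of $\tau_{\mathbb{G}_{m},\mathbb{G}_{m}}$, which is precisely the part of Morel's Lemma 6.6.1 that is reused here, now over $\mathrm{Spec}\,\mathbb{Z}$.
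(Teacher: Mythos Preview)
Your approach is correct in outline but genuinely different from the paper's. The paper never decomposes into simplicial and Tate factors; instead it works entirely with the geometric model. It identifies $\mathbb{P}^1\wedge\mathbb{P}^1$ with $\mathbb{A}^2/(\mathbb{A}^2-0)$ via the elementary distinguished square, so that the swap of the two $\mathbb{P}^1$'s becomes the automorphism $T_0\leftrightarrow T_1$ of $\mathbb{A}^2/(\mathbb{A}^2-0)$, then writes down three explicit $\mathbb{A}^1$-naive homotopies (elementary-matrix moves, given as ring maps into $\mathbb{Z}[T_0,T_1,T]$) connecting this to the automorphism $T_0\mapsto -T_0,\ T_1\mapsto T_1$. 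The latter pulls back to $\tau\wedge\mathrm{id}_{\mathbb{P}^1}$ for the coordinate involution $\tau$ of $\mathbb{P}^1$, and $\tau$ corresponds to $-\epsilon_{1+(1)}$ under $\mathbb{P}^1\simeq S^1\wedge\mathbb{G}_m$.

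Two remarks on your version. First, the identification of the $\mathbb{G}_m$-switch with $\epsilon$ is, in the paper's logical order, Corollary~2.3 and is \emph{derived from} this lemma; so your argument must supply an independent proof of the Tate switch, and the elementary-matrix computation you invoke on $\mathbb{A}^2\smallsetminus 0$ is essentially the same work the paper does on $\mathbb{A}^2/(\mathbb{A}^2-0)$, just shifted to a different model. Second, your parenthetical that one can give an explicit $\mathbb{A}^1$-homotopy \emph{on $\mathbb{G}_m\wedge\mathbb{G}_m$ itself} is too optimistic: the elementary-matrix trick lives on $\mathbb{A}^2\smallsetminus 0\simeq\Sigma(\mathbb{G}_m\wedge\mathbb{G}_m)$, so you only get the relation after a suspension (which is all you need), and you must then chase the coordinate swap through that equivalence, where the Mayer--Vietoris cover is itself permuted and contributes a sign. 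The paper's route trades your extra conjugation-by-$r$ and Mayer--Vietoris bookkeeping for a single explicit homotopy chain; your route makes the $(-1)$ from $\tau_{S^1,S^1}$ and the $\epsilon$ from $\tau_{\mathbb{G}_m,\mathbb{G}_m}$ visibly separate, at the cost of that bookkeeping.
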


\begin{proof}
From the elementary distinguished square for $\mathbb{P}^1$ we get an $\mathbb{A}^1$-weak equivalence from $\mathbb{P}^1/\mathbb{A}^1$ to $\mathbb{A}^1/\mathbb{G}_{m}$. The canonical projection from $\mathbb{P}^1$ to $\mathbb{P}^1/\mathbb{A}^1$ is also an $\mathbb{A}^1$-weak equivalence. Moreover we have the isomorphism of pointed motivic spaces $\mathbb{A}^1/\mathbb{G}_{m}\wedge \mathbb{A}^1/\mathbb{G}_{m}\cong \mathbb{A}^2/ \mathbb{A}^2-0$. Hence the permutation of the two copies of $\mathbb{P}^1$ corresponds in the homotopy category to the endomorphism $\phi$ of $\mathbb{A}^2/ \mathbb{A}^2-0$ which is induced by the ring homomorphism $\mathbb{Z}[T_0, T_1]\rightarrow \mathbb{Z}[T_0, T_1]; T_0\mapsto T_1, T_1\mapsto T_0$. Now let $\psi$ be the endomorphism of $\mathbb{A}^2/ \mathbb{A}^2-0$ which is induced by the ring homomorphism $\mathbb{Z}[T_0, T_1]\rightarrow \mathbb{Z}[T_0, T_1]; T_0\mapsto -T_0, T_1\mapsto T_1$. There is a sequence of $\mathbb{A}^1$-naive homotopies (see \cite{ASENS_2012_4_45_4_511_0}) from $\phi$ to $\psi$. The $\mathbb{A}^1$-homotopies are induced by the following ring homomorphisms. The first one is given by \begin{align*}
	\mathbb{Z}[T_0,T_1]\rightarrow	\mathbb{Z}[T_0,T_1,T]; \ T_0\mapsto T_1-TT_0,\  T_1\mapsto T_0\ \ \cdot
\end{align*}
The second one is given by \begin{align*}
	\mathbb{Z}[T_0,T_1]\rightarrow	\mathbb{Z}[T_0,T_1,T]; \ T_0\mapsto T_1-T_0,\  T_1\mapsto TT_0+(1-T)T_1\ \ \cdot
\end{align*}
Then the last one is given by \begin{align*}
	\mathbb{Z}[T_0,T_1]\rightarrow	\mathbb{Z}[T_0,T_1,T]; \ T_0\mapsto TT_1-T_0,\  T_1\mapsto T_1\ \ \cdot \\
\end{align*}
Therefore the permutation of $\mathbb{P}^1\wedge \mathbb{P}^1$ can be identified with the endomorphism $\psi$ of $\mathbb{A}^2/ \mathbb{A}^2-0$. On the other hand if we consider the zig-zag of $\mathbb{A}^1$-weak equivalences between $\mathbb{P}^1\wedge \mathbb{P}^1$ and $\mathbb{A}^2/ \mathbb{A}^2-0$ which we described at the beginning of the proof, then $\psi$ corresponds to $\tau\wedge\mathrm{id}_{\mathbb{P}^1}$. Here $\tau: \mathbb{P}^1\rightarrow \mathbb{P}^1$ is the map defined by \begin{align*}
	\mathbb{Z}[T_0,T_1]\rightarrow	\mathbb{Z}[T_0,T_1,T]; \ T_0\mapsto T_1,\  T_1\mapsto T_0\ \ \cdot 
\end{align*}
In particular $\tau $ corresponds to $-\epsilon_{1+(1)}$ under the isomorphism $\mathbb{P}^1\cong S^1\wedge \mathbb{G}_{m}$. Hence the permutation of $\mathbb{P}^1\wedge \mathbb{P}^1$ corresponds to $-\epsilon_{1+(1)}\wedge \mathrm{id}_{1+(1)}$.\\
\end{proof}

\begin{cor} \label{Corollary 2.2}
The morphism \begin{align*}
	S^1\wedge \mathbb{G}_{m}\wedge S^1\wedge \mathbb{G}_{m} \rightarrow		S^1\wedge \mathbb{G}_{m}\wedge S^1\wedge \mathbb{G}_{m}; s\wedge x \wedge t\wedge y\mapsto t\wedge y\wedge s\wedge x  
\end{align*}
is equal to $-\epsilon_{1+(1)}\wedge \mathrm{id}_{1+(1)}$ in $\mathcal{H}_{\bullet}(\mathbb{Z})$.\\
\end{cor}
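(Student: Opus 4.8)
The plan is to read the corollary off Lemma~\ref{lemma 2.1} after recognising the block-transposition in the statement as the symmetry isomorphism of $\mathbb{P}^1\wedge\mathbb{P}^1$, transported along the identification $\mathbb{P}^1\cong S^1\wedge\mathbb{G}_m$.

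First I would fix the isomorphism $\iota\colon\mathbb{P}^1\to S^1\wedge\mathbb{G}_m$ of \cite[p.408]{levine2010slices}. Smashing two copies of $\iota$ and composing with the structural associativity isomorphism of the symmetric monoidal smash product on $\mathrm{sPre}(\mathbb{Z})_{\ast}$ produces an isomorphism $\mathbb{P}^1\wedge\mathbb{P}^1\cong S^1\wedge\mathbb{G}_m\wedge S^1\wedge\mathbb{G}_m$ in $\mathcal{H}_{\bullet}(\mathbb{Z})$. The one point to check is that under this isomorphism the transposition interchanging the two $\mathbb{P}^1$-factors corresponds to the map $s\wedge x\wedge t\wedge y\mapsto t\wedge y\wedge s\wedge x$. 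This is formal: writing $X=S^1$ and $Y=\mathbb{G}_m$, naturality of the symmetry means $\iota\wedge\iota$ carries $\tau_{\mathbb{P}^1,\mathbb{P}^1}$ to $\tau_{X\wedge Y,\,X\wedge Y}$, and by the coherence theorem for symmetric monoidal categories the latter, after the structural reassociation of $(X\wedge Y)\wedge(X\wedge Y)$ as $X\wedge Y\wedge X\wedge Y$, is precisely the block-swap above, the associator contributing no extra sign. Granting this, Lemma~\ref{lemma 2.1} identifies $\tau_{\mathbb{P}^1,\mathbb{P}^1}$ read through $\iota$ with $-\epsilon_{1+(1)}\wedge\mathrm{id}_{1+(1)}$, and the corollary follows.

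So the proof is essentially immediate from Lemma~\ref{lemma 2.1}, and I expect no real obstacle beyond the coherence bookkeeping in the middle step. If one prefers to avoid an appeal to coherence, the block-swap can instead be written as a suitable composite of four transpositions of adjacent smash factors and each factor evaluated by hand --- the two $S^1\leftrightarrow\mathbb{G}_m$ transpositions being the identity in $\mathcal{H}_{\bullet}(\mathbb{Z})$, the $S^1\wedge S^1$-transposition contributing the sign $-1$, and the $\mathbb{G}_m\wedge\mathbb{G}_m$-transposition contributing $\epsilon_{1+(1)}$ (cf.\ \cite{morelintro}) --- which again yields $-\epsilon_{1+(1)}\wedge\mathrm{id}_{1+(1)}$; I would present the coherence argument as the main proof.
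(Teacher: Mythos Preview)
Your main argument is correct and matches the paper: Corollary~\ref{Corollary 2.2} is stated there without proof, as an immediate consequence of Lemma~\ref{lemma 2.1}, and your transport-of-structure/coherence argument is exactly what makes that inference valid.

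One remark on your alternative route via adjacent transpositions: within the paper's logical order this would be backwards, since the identification of the $\mathbb{G}_m\wedge\mathbb{G}_m$ swap with $\epsilon$ is the content of Corollary~\ref{Corollary 2.3}, which the paper \emph{derives from} Corollary~\ref{Corollary 2.2} rather than the other way around. Citing Morel directly avoids circularity, but it imports a result the paper is in effect re-proving. Your coherence argument is the cleaner choice here.
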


\begin{cor} \label{Corollary 2.3}
The morphism \begin{align*}
S^1\wedge S^1\wedge \mathbb{G}_{m}\wedge\mathbb{G}_{m} \rightarrow		S^1\wedge S^1\wedge \mathbb{G}_{m}\wedge \mathbb{G}_{m}; s\wedge t \wedge x\wedge y\mapsto s\wedge t\wedge y\wedge x
\end{align*}
is equal to $\epsilon_{2+(2)}=\mathrm{id}_{	S^1\wedge S^1\wedge \mathbb{G}_{m}}\wedge \epsilon_{(1)}$. The identity $\epsilon_{1+(2)}=\mathrm{id}_{	S^1\wedge \mathbb{G}_{m}}\wedge \epsilon_{(1)}$ is proved in the author's doctoral thesis \cite[Lemma 3.4.7]{dong2024motivictodabrackets}.\\
\end{cor}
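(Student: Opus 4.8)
The plan is to deduce the statement formally from Corollary~\ref{Corollary 2.2} (equivalently from Lemma~\ref{lemma 2.1}), using in addition the classical fact that the transposition $\tau_{S^1,S^1}\colon S^1\wedge S^1\to S^1\wedge S^1$ of the two simplicial circles is the degree $-1$ self-map $-\mathrm{id}_{S^1\wedge S^1}$ (this is already true for simplicial sets). Write $\tau_{\mathbb{G}_m,\mathbb{G}_m}$ for the transposition $x\wedge y\mapsto y\wedge x$ of the two Tate circles, so that the map in the statement is exactly $\mathrm{id}_{S^1\wedge S^1}\wedge\tau_{\mathbb{G}_m,\mathbb{G}_m}$.

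First I would fix the coordinate shuffle $w\colon S^1\wedge S^1\wedge\mathbb{G}_m\wedge\mathbb{G}_m\xrightarrow{\ \sim\ }S^1\wedge\mathbb{G}_m\wedge S^1\wedge\mathbb{G}_m$, $s\wedge t\wedge x\wedge y\mapsto s\wedge x\wedge t\wedge y$, which is built only out of transpositions of the two \emph{distinct} factors $S^1$ and $\mathbb{G}_m$ and hence contributes no sign. A direct computation on smash coordinates shows that $w^{-1}$ composed with the permutation of Corollary~\ref{Corollary 2.2} (the swap of the two $S^1\wedge\mathbb{G}_m$-blocks) composed with $w$ is the self-map of $S^1\wedge S^1\wedge\mathbb{G}_m\wedge\mathbb{G}_m$ which interchanges the two $S^1$-factors and the two $\mathbb{G}_m$-factors simultaneously, that is, it equals $\tau_{S^1,S^1}\wedge\tau_{\mathbb{G}_m,\mathbb{G}_m}$. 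Transporting the right-hand side $-\epsilon_{1+(1)}\wedge\mathrm{id}_{1+(1)}$ of Corollary~\ref{Corollary 2.2} along $w$ in the same way, and using $-\epsilon_{1+(1)}=(-\mathrm{id}_{S^1})\wedge\epsilon_{(1)}$ together with the naturality of $w$, one gets $(-\mathrm{id}_{S^1\wedge S^1})\wedge(\epsilon_{(1)}\wedge\mathrm{id}_{\mathbb{G}_m})$.

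Hence $\tau_{S^1,S^1}\wedge\tau_{\mathbb{G}_m,\mathbb{G}_m}=(-\mathrm{id}_{S^1\wedge S^1})\wedge(\epsilon_{(1)}\wedge\mathrm{id}_{\mathbb{G}_m})$ in $\mathcal{H}_\bullet(\mathbb{Z})$. Since $\tau_{S^1,S^1}=-\mathrm{id}_{S^1\wedge S^1}$ and the self-map $-\mathrm{id}_{S^1\wedge S^1}$ squares to the identity, composing both sides on the left with $(-\mathrm{id}_{S^1\wedge S^1})\wedge\mathrm{id}_{\mathbb{G}_m\wedge\mathbb{G}_m}$ cancels the $S^1\wedge S^1$-factor and leaves $\mathrm{id}_{S^1\wedge S^1}\wedge\tau_{\mathbb{G}_m,\mathbb{G}_m}=\mathrm{id}_{S^1\wedge S^1}\wedge(\epsilon_{(1)}\wedge\mathrm{id}_{\mathbb{G}_m})$. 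The left-hand side is the map in the statement, which therefore equals $\epsilon_{(1)}$ smashed onto one of the two $\mathbb{G}_m$-factors. To put this into the displayed normal form $\epsilon_{2+(2)}=\mathrm{id}_{S^1\wedge S^1\wedge\mathbb{G}_m}\wedge\epsilon_{(1)}$ I would then smash the identity $\epsilon_{1+(2)}=\mathrm{id}_{S^1\wedge\mathbb{G}_m}\wedge\epsilon_{(1)}$ of \cite[Lemma 3.4.7]{dong2024motivictodabrackets} with $\mathrm{id}_{S^1}$ on the left.

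The only real obstacle is bookkeeping: following each elementary smash factor through the shuffle $w$, and keeping the two meanings of ``$-$'' apart --- the degree $-1$ self-map of one simplicial circle on the one hand, and the transposition $\tau_{S^1,S^1}$ of two of them on the other --- so that the final cancellation is manifestly legitimate. No new geometric computation is required beyond Corollary~\ref{Corollary 2.2} and the classical sign of the transposition of $S^1\wedge S^1$.
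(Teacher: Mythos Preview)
Your proposal is correct and follows essentially the same route as the paper: conjugate the block swap of Corollary~\ref{Corollary 2.2} by the shuffle $w$ (the paper's $\theta$) to obtain $\tau_{S^1,S^1}\wedge\tau_{\mathbb{G}_m,\mathbb{G}_m}$ on one side and $-\epsilon$ on the appropriate factor on the other, then cancel the sign via $\tau_{S^1,S^1}=-\mathrm{id}_{S^1\wedge S^1}$ and finally invoke \cite[Lemma 3.4.7]{dong2024motivictodabrackets} to move $\epsilon_{(1)}$ to the last $\mathbb{G}_m$-factor. The paper phrases the sign cancellation as ``this is exactly the inverse of'' the target map, whereas you compose explicitly with $(-\mathrm{id}_{S^1\wedge S^1})\wedge\mathrm{id}$, but the content is identical.
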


\begin{proof}
Let $\theta$ denote the isomorphism \begin{align*}
S^1\wedge S^1\wedge \mathbb{G}_{m}\wedge\mathbb{G}_{m} \rightarrow		S^1\wedge \mathbb{G}_{m}\wedge S^1\wedge \mathbb{G}_{m}; s\wedge t \wedge x\wedge y\mapsto s\wedge x\wedge t\wedge y 
\end{align*}
We denote the morphism in Corollary 2.2 by $\lambda$. Then the composite $\theta^{-1}\circ \lambda\circ \theta$ is the morphism \begin{align*}
	S^1\wedge S^1\wedge \mathbb{G}_{m}\wedge\mathbb{G}_{m} \rightarrow		S^1\wedge S^1\wedge \mathbb{G}_{m}\wedge\mathbb{G}_{m}; s\wedge t \wedge x\wedge y\mapsto t\wedge s\wedge y\wedge x \ \ \cdot
\end{align*}
This is exactly the inverse of \begin{align*}
	S^1\wedge S^1\wedge \mathbb{G}_{m}\wedge\mathbb{G}_{m} \rightarrow		S^1\wedge S^1\wedge \mathbb{G}_{m}\wedge \mathbb{G}_{m}; s\wedge t \wedge x\wedge y\mapsto s\wedge t\wedge y\wedge x \ \ \cdot
\end{align*}
By Corollary 2.3 the composite $\theta^{-1}\circ \lambda\circ \theta$ is also equal to $-(\theta^{-1}\circ \epsilon_{1+(1)}\wedge \mathrm{id}_{1+(1)} \circ \theta )$. Furthermore $\theta^{-1}\circ \epsilon_{1+(1)}\wedge \mathrm{id}_{1+(1)} \circ \theta$ is just $\epsilon_{2+(2)}$. Therefore the morphism \begin{align*}
	S^1\wedge S^1\wedge \mathbb{G}_{m}\wedge\mathbb{G}_{m} \rightarrow		S^1\wedge S^1\wedge \mathbb{G}_{m}\wedge \mathbb{G}_{m}; s\wedge t \wedge x\wedge y\mapsto s\wedge t\wedge y\wedge x \ \ \cdot
\end{align*}
equals to $\epsilon_{2+(2)}$.\\
\end{proof}

\begin{cor}\label{Corollary 2.4} The endomorphism of $S^1\wedge S^1\wedge \mathbb{G}_{m}\wedge\mathbb{G}_{m}\wedge\mathbb{G}_{m} $ which is given by \begin{align*}
		s\wedge t \wedge x'\wedge x\wedge y \mapsto s\wedge t \wedge x'\wedge y\wedge x
	\end{align*}
is equal to the endomorphism $S^1\wedge S^1\wedge \mathbb{G}_{m}\wedge\mathbb{G}_{m}\wedge\mathbb{G}_{m} $ given by \begin{align*}
	s\wedge t \wedge x'\wedge x\wedge y \mapsto s\wedge t \wedge x'\wedge x^{-1}\wedge y \ \ \cdot
\end{align*} 
\end{cor}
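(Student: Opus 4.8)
The plan is to deduce this identity from Corollary~\ref{Corollary 2.3} by a single smashing operation followed by conjugation with a coordinate permutation, so that essentially no new geometric input is needed. First I would record that Corollary~\ref{Corollary 2.3} says that in $\mathcal{H}_{\bullet}(\mathbb{Z})$ the swap $s\wedge t\wedge x\wedge y\mapsto s\wedge t\wedge y\wedge x$ of $S^1\wedge S^1\wedge\mathbb{G}_{m}\wedge\mathbb{G}_{m}$ equals $\mathrm{id}_{S^1\wedge S^1\wedge\mathbb{G}_{m}}\wedge\epsilon_{(1)}$. Smashing both sides of this equality on the right with $\mathrm{id}_{\mathbb{G}_{m}}$ — which descends to $\mathcal{H}_{\bullet}(\mathbb{Z})$ because $\mathbb{G}_{m}$ is cofibrant, so that $-\wedge\mathbb{G}_{m}$ preserves $\mathbb{A}^1$-weak equivalences — I obtain that the endomorphism of $S^1\wedge S^1\wedge\mathbb{G}_{m}\wedge\mathbb{G}_{m}\wedge\mathbb{G}_{m}$ given by $s\wedge t\wedge a\wedge b\wedge c\mapsto s\wedge t\wedge b\wedge a\wedge c$ is equal in $\mathcal{H}_{\bullet}(\mathbb{Z})$ to the one given by $s\wedge t\wedge a\wedge b\wedge c\mapsto s\wedge t\wedge a\wedge b^{-1}\wedge c$; note that here $\epsilon_{(1)}$ has landed on the \emph{middle} of the three $\mathbb{G}_{m}$-factors.

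Next I would introduce the automorphism $\nu$ of $S^1\wedge S^1\wedge\mathbb{G}_{m}\wedge\mathbb{G}_{m}\wedge\mathbb{G}_{m}$ defined by $s\wedge t\wedge p\wedge q\wedge r\mapsto s\wedge t\wedge r\wedge q\wedge p$, i.e.\ the coordinate permutation interchanging the first and the last $\mathbb{G}_{m}$-factor while fixing the middle one. It is an isomorphism of pointed motivic spaces with $\nu=\nu^{-1}$, hence also an isomorphism in $\mathcal{H}_{\bullet}(\mathbb{Z})$, and conjugation by it preserves equalities of morphisms in $\mathcal{H}_{\bullet}(\mathbb{Z})$. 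Conjugating the equality from the previous paragraph by $\nu$, a direct computation gives that $\nu\circ(s\wedge t\wedge a\wedge b\wedge c\mapsto s\wedge t\wedge b\wedge a\wedge c)\circ\nu$ is the map $s\wedge t\wedge x'\wedge x\wedge y\mapsto s\wedge t\wedge x'\wedge y\wedge x$, whereas $\nu\circ(s\wedge t\wedge a\wedge b\wedge c\mapsto s\wedge t\wedge a\wedge b^{-1}\wedge c)\circ\nu$ is the map $s\wedge t\wedge x'\wedge x\wedge y\mapsto s\wedge t\wedge x'\wedge x^{-1}\wedge y$, because $\nu$ fixes the middle $\mathbb{G}_{m}$-factor, which is precisely the one on which $\epsilon_{(1)}$ acts. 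This is exactly the asserted identity.

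I do not expect a genuine obstacle here: all the homotopical content already lies in Lemma~\ref{lemma 2.1} and Corollary~\ref{Corollary 2.3}, and the proof is essentially bookkeeping. The only two points to be a little careful about are that $-\wedge\mathbb{G}_{m}$ sends the relevant homotopy class to the expected one, and the precise tracking of which $\mathbb{G}_{m}$-factor carries the $\epsilon_{(1)}$ after smashing with $\mathrm{id}_{\mathbb{G}_{m}}$ and after conjugating with $\nu$.
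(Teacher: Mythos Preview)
Your proof is correct. The paper does not supply an explicit proof of Corollary~\ref{Corollary 2.4}; it is left as an immediate consequence of Corollary~\ref{Corollary 2.3}, and your deduction---smashing the identity of Corollary~\ref{Corollary 2.3} with $\mathrm{id}_{\mathbb{G}_{m}}$ on the right and then conjugating by the involution exchanging the outer two $\mathbb{G}_{m}$-factors---is exactly the kind of bookkeeping argument intended.
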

\

The first motivic Hopf map $\eta_{1+(1)}$ is defined to be the Hopf construction of the pointed morphism $\mathbb{G}_{m}\times \mathbb{G}_{m}\rightarrow \mathbb{G}_{m}; (x,y)\mapsto x^{-1}y$. In $\mathcal{H}_{\bullet}(\mathbb{Z})$ we have the following relation.\\

\begin{lem}[{cf. \cite[Proposition 4.10]{motihopf}}] \label{Lemma 2.5}In $\mathcal{H}_{\bullet}(\mathbb{Z})$ the map $\eta_{1+(2)}$ is equal to the $\mathbb{G}_{m}$-suspension of the Hopf construction on the pointed morphism $\mu:\mathbb{G}_{m}\times \mathbb{G}_{m}\rightarrow \mathbb{G}_{m}; (x,y)\mapsto xy$.\\
\end{lem}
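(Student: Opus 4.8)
The plan is to reduce $\eta_{1+(2)}$ to the $\mathbb{G}_{m}$-suspension of $H(\mu)$ by exploiting the factorisation $f = \mu\circ(\epsilon_{(1)}\times\mathrm{id}_{\mathbb{G}_{m}})$ of the morphism $f\colon\mathbb{G}_{m}\times\mathbb{G}_{m}\to\mathbb{G}_{m},\ (x,y)\mapsto x^{-1}y$ defining $\eta_{1+(1)}$, together with the naturality of the Hopf construction. The latter is straightforward: the map $\mathcal{X}\ast\mathcal{Y}\to\Sigma\mathcal{Z}$ attached to a pointed morphism $k\colon\mathcal{X}\times\mathcal{Y}\to\mathcal{Z}$ and the projection $\mathcal{X}\ast\mathcal{Y}\to\Sigma(\mathcal{X}\wedge\mathcal{Y})$ are both natural in the evident variables, so that $H\bigl(g\circ k\circ(a\times b)\bigr) = \Sigma g\circ H(k)\circ\Sigma(a\wedge b)$ in $\mathcal{H}_{\bullet}(\mathbb{Z})$ for a product morphism $a\times b$ on the source and any $g$ on the target. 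Applying this with $k=\mu$, $a=\epsilon_{(1)}$, $b=\mathrm{id}_{\mathbb{G}_{m}}$, and then suspending once by $\mathbb{G}_{m}$, gives
\[
	\eta_{1+(2)} \;=\; \bigl(H(\mu)\wedge\mathrm{id}_{\mathbb{G}_{m}}\bigr)\circ\Sigma\bigl(\epsilon_{(1)}\wedge\mathrm{id}_{\mathbb{G}_{m}}\wedge\mathrm{id}_{\mathbb{G}_{m}}\bigr)
\]
in $\mathcal{H}_{\bullet}(\mathbb{Z})$.

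The next step is to migrate the twist $\Sigma(\epsilon_{(1)}\wedge\mathrm{id}_{\mathbb{G}_{m}}\wedge\mathrm{id}_{\mathbb{G}_{m}})$, which is $\epsilon_{(1)}$ applied to the first $\mathbb{G}_{m}$-coordinate of $S^1\wedge\mathbb{G}_{m}\wedge\mathbb{G}_{m}\wedge\mathbb{G}_{m}$, onto the last $\mathbb{G}_{m}$-coordinate — the new suspension coordinate. This rests on \cite[Lemma 3.4.7]{dong2024motivictodabrackets}, which says that in $S^1\wedge\mathbb{G}_{m}\wedge\mathbb{G}_{m}$ the factor $\epsilon_{(1)}$ may be moved from one $\mathbb{G}_{m}$-coordinate to the other, applied twice together with the symmetry of the smash product; this is precisely the kind of bookkeeping packaged in Corollaries \ref{Corollary 2.3} and \ref{Corollary 2.4}. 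So the twist equals $\mathrm{id}_{1+(2)}\wedge\epsilon_{(1)}$. Since $H(\mu)\wedge\mathrm{id}_{\mathbb{G}_{m}}$ is the identity on that last $\mathbb{G}_{m}$-coordinate, it commutes past the twist, turning it into a post-composition; using \cite[Lemma 3.4.7]{dong2024motivictodabrackets} once more on the target one obtains $\eta_{1+(2)} = \epsilon_{1+(2)}\circ\bigl(H(\mu)\wedge\mathrm{id}_{\mathbb{G}_{m}}\bigr)$.

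It remains to show that $H(\mu)\wedge\mathrm{id}_{\mathbb{G}_{m}}$ is fixed by post-composition with $\epsilon_{1+(2)}$. The on-the-nose identity $\mu(x^{-1},y^{-1}) = (xy)^{-1}$ reads $\mu\circ(\epsilon_{(1)}\times\epsilon_{(1)}) = \epsilon_{(1)}\circ\mu$, so by naturality $H(\mu)\circ\Sigma(\epsilon_{(1)}\wedge\epsilon_{(1)}) = \epsilon_{1+(1)}\circ H(\mu)$. But $\Sigma(\epsilon_{(1)}\wedge\epsilon_{(1)}) = \mathrm{id}_{S^1}\wedge\epsilon_{(1)}\wedge\epsilon_{(1)}$ is the identity in $\mathcal{H}_{\bullet}(\mathbb{Z})$: by \cite[Lemma 3.4.7]{dong2024motivictodabrackets} the two copies of $\epsilon_{(1)}$ can be brought onto the same $\mathbb{G}_{m}$-coordinate, where they cancel because $\epsilon_{(1)}\circ\epsilon_{(1)}=\mathrm{id}_{\mathbb{G}_{m}}$ already on the nose. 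Hence $H(\mu) = \epsilon_{1+(1)}\circ H(\mu)$, and $\mathbb{G}_{m}$-suspending gives $H(\mu)\wedge\mathrm{id}_{\mathbb{G}_{m}} = \epsilon_{1+(2)}\circ\bigl(H(\mu)\wedge\mathrm{id}_{\mathbb{G}_{m}}\bigr)$. Combining this with the previous step yields $\eta_{1+(2)} = H(\mu)\wedge\mathrm{id}_{\mathbb{G}_{m}}$, the $\mathbb{G}_{m}$-suspension of $H(\mu)$, as required. The part I expect to be most delicate is the middle paragraph: pinning down the naturality of the Hopf construction for product morphisms precisely enough to identify the induced map of joins with the suspended smash, and then tracking the resulting $\epsilon_{(1)}$-twist through the three $\mathbb{G}_{m}$-factors so that it passes from a pre-composition on the source to a post-composition on the target. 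This is exactly the point at which the stable reasoning of \cite[Proposition 4.10]{motihopf} must be reworked inside $\mathcal{H}_{\bullet}(\mathbb{Z})$, and where Corollaries \ref{Corollary 2.2}--\ref{Corollary 2.4} are brought to bear.
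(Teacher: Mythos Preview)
Your argument is correct, but it takes a different final step from the paper's proof. Both proofs begin identically: naturality of the Hopf construction applied to $f=\mu\circ(\epsilon_{(1)}\times\mathrm{id})$ gives $\eta_{1+(1)}=H(\mu)\circ\epsilon_{1+(2)}$, and after $\mathbb{G}_m$-suspension one has $\eta_{1+(2)}=(H(\mu)\wedge\mathrm{id}_{\mathbb{G}_m})\circ\epsilon_{1+(3)}$. From here the paper simply invokes the pre-established relation $\eta_{1+(2)}\circ h_{1+(3)}=0$ from \cite[Proposition 3.4.9]{dong2024motivictodabrackets}, which immediately yields $\eta_{1+(2)}\circ\epsilon_{1+(3)}=\eta_{1+(2)}$ and hence the result. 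You instead avoid that relation entirely: you shuttle the $\epsilon_{(1)}$-twist from the first $\mathbb{G}_m$-coordinate of the source to the last, pass it through $H(\mu)\wedge\mathrm{id}_{\mathbb{G}_m}$ to the target, and then kill it using the multiplicative identity $\mu\circ(\epsilon_{(1)}\times\epsilon_{(1)})=\epsilon_{(1)}\circ\mu$ together with $\Sigma(\epsilon_{(1)}\wedge\epsilon_{(1)})=\mathrm{id}$. This is more elementary in that it relies only on Lemma~3.4.7 and on-the-nose identities, at the cost of more bookkeeping; the paper's route is shorter but imports a heavier result.

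One small remark: Corollaries~\ref{Corollary 2.3} and~\ref{Corollary 2.4} as stated require two simplicial suspensions, whereas in your middle paragraph you are working in $S^1\wedge\mathbb{G}_m^{\wedge 3}$. What you actually need there---that $\epsilon_{(1)}$ on the first $\mathbb{G}_m$-factor equals $\epsilon_{(1)}$ on the third---follows directly from \cite[Lemma~3.4.7]{dong2024motivictodabrackets} (smashed once with $\mathbb{G}_m$) together with conjugation by the swap of the last two $\mathbb{G}_m$-factors, exactly as your phrase ``applied twice together with the symmetry of the smash product'' suggests. It would be cleaner to cite Lemma~3.4.7 alone here rather than the two corollaries.
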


\begin{proof}
We have the commutative diagram \[\begin{tikzcd}
	{\Sigma(\mathbb{G}_{m}\wedge \mathbb{G}_{m})} & {\mathbb{G}_{m}\ast\mathbb{G}_{m}} & {\Sigma\mathbb{G}_{m}} \\
	{\Sigma(\mathbb{G}_{m}\wedge \mathbb{G}_{m})} & {\mathbb{G}_{m}\ast\mathbb{G}_{m}} & {\Sigma\mathbb{G}_{m}}
	\arrow["\sim", from=1-1, to=1-2]
	\arrow["{\epsilon_{1+(2)}}"', from=1-1, to=2-1]
	\arrow["f", from=1-2, to=1-3]
	\arrow["{\epsilon_{(1)}\ast\mathrm{id}}"', from=1-2, to=2-2]
	\arrow["{\mathrm{id}}"', from=1-3, to=2-3]
	\arrow["\sim", from=2-1, to=2-2]
	\arrow["\tilde{\mu}", from=2-2, to=2-3]
\end{tikzcd}\]
in $\mathcal{H}_{\bullet}(\mathbb{Z})$  where $f$ is induced by $(x,y)\mapsto x^{-1}y$ and $\tilde{\mu}$ induced by $(x,y)\mapsto xy$. The composition along the top of the diagram is the Hopf construction $H(f)$. The composition along the bottom of the diagram is $\eta_{1+(1)}$. Therefore we have $\eta_{1+(1)}\circ \epsilon_{1+(2)}= H(f)$. In the author's doctoral thesis \cite[Proposition 3.4.9 ]{dong2024motivictodabrackets} the relation $\eta_{1+(2)}\circ h_{1+(3)}=0$ is proved where $h_{1+(1)}: =1_{1+(1)}-\epsilon_{1+(1)}$. Therefore $H(f)\wedge \mathrm{id}_{\mathbb{G}_{m}}=\eta_{1+(2)}\circ \epsilon_{1+(3)}= \eta_{1+(2)}$.\\
	\end{proof}
	
Let $\mathrm{SL}_{2}= \mathrm{Spec} \ \mathbb{Z}[T_{11}, T_{12}, T_{21}, T_{22}]/(\mathrm{det}-1)$ be the special linear group scheme. It is equipped with the base point $\begin{pmatrix}
	1&0\\
	0&1
\end{pmatrix}$. Then the projection onto the last column from $\mathrm{SL}_{2}$ to $\mathbb{A}^2-0$ is a pointed $\mathbb{A}^1$-weak equivalence if we equip $\mathbb{A}^2-0$ with the base point $(0,1)$ (see \cite[Example 2.12(3)]{motihopf}). Now there is a pointed morphism $\alpha: \mathrm{SL}_{2}\times \mathbb{G}_{m} \rightarrow \mathrm{SL}_{2} $ given by the ring homomorphism \begin{align*}
T_{11}\mapsto T_{11}, T_{12}\mapsto tT_{12}, T_{21}\mapsto t^{-1}T_{21}, T_{22}\mapsto T_{22} \ \ \cdot
\end{align*}
We abuse here the notation and denote this map in the following by \begin{align*}
(\begin{pmatrix}
		T_{11}&T_{12}\\
		T_{21}&T_{22}
	\end{pmatrix}, t) \mapsto \begin{pmatrix}
	T_{11}&tT_{12}\\
	t^{-1}T_{21}&T_{22}
	\end{pmatrix} \ \ \cdot
\end{align*}
There is a canonical elementary distinguished square \[\begin{tikzcd}
	{\mathbb{G}_{m}\times \mathbb{G}_{m}} & {\mathbb{A}^1\times \mathbb{G}_{m}} \\
	{\mathbb{G}_{m}\times \mathbb{A}^1} & {\mathbb{A}^2-0} & \cdot
	\arrow[hook, from=1-1, to=1-2]
	\arrow[hook, from=1-1, to=2-1]
	\arrow[hook, from=1-2, to=2-2]
	\arrow[hook, from=2-1, to=2-2]
\end{tikzcd}\]
Via this elementary distinguished square we can identify $\mathbb{A}^2-0$ with $S^1\wedge \mathbb{G}_{m}^2$. Hence $\mathrm{SL}_{2}$ is also identified with $S^1\wedge \mathbb{G}_{m}^2$. Via these identifications the Hopf construction $H(\alpha)$ can be considered as a morphism $S^2\wedge\mathbb{G}_{m}^3 \rightarrow S^2\wedge\mathbb{G}_{m}^2$. In \cite[Lemma 5.3]{motihopf} Dugger and Isaksen showed that stably the Hopf construction $H(\alpha)$ is equal to $\eta$. We prove now an unstable analogue of this claim.\\

\begin{prop}\label{Propostion 2.6}
	The Hopf construction $H(\alpha)$ is equal to $\eta_{2+(2)}$ in $\mathcal{H}_{\bullet}(\mathbb{Z})$.\\
\end{prop}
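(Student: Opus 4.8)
The strategy is to prove the unstable analogue of the computation $H(\alpha)=\eta$ of Dugger and Isaksen \cite[Lemma~5.3]{motihopf} by first using the functoriality of the Hopf construction to replace $(\mathrm{SL}_{2},\alpha)$ by an action on $\mathbb{A}^2-0$, and then reducing that action to the multiplication map $\mu$ so that Lemma~\ref{Lemma 2.5} applies. Let $p\colon\mathrm{SL}_{2}\to\mathbb{A}^2-0$ be the projection onto the last column, which is a pointed $\mathbb{A}^1$-weak equivalence. One checks directly that $p\circ\alpha=\bar\alpha\circ(p\times\mathrm{id}_{\mathbb{G}_{m}})$, where $\bar\alpha\colon(\mathbb{A}^2-0)\times\mathbb{G}_{m}\to\mathbb{A}^2-0$ is the pointed morphism $((a,b),t)\mapsto(ta,b)$ scaling the first coordinate: both composites send $(M,t)$ to $(tM_{12},M_{22})$. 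By naturality of the reduced join and of the projection $\mathcal{X}\ast\mathcal{Y}\to\Sigma(\mathcal{X}\wedge\mathcal{Y})$, this identity produces a commuting square in $\mathcal{H}_{\bullet}(\mathbb{Z})$ whose horizontal maps are $H(\alpha)$ and $H(\bar\alpha)$ and whose vertical maps are $\Sigma p$ and $\Sigma(p\wedge\mathrm{id}_{\mathbb{G}_{m}})$; since $\mathbb{G}_{m}$ is cofibrant in the injective model structure, smashing with it preserves $\mathbb{A}^1$-weak equivalences, so the vertical maps are isomorphisms. Hence it suffices to show that $H(\bar\alpha)$, with source and target identified with $S^2\wedge\mathbb{G}_{m}^{\wedge3}$ and $S^2\wedge\mathbb{G}_{m}^{\wedge2}$ via the identification $\mathbb{A}^2-0\cong S^1\wedge\mathbb{G}_{m}^{\wedge2}$ of the elementary distinguished square, equals $\eta_{2+(2)}$.

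Next I would analyse $\bar\alpha$ through the elementary distinguished square. That square exhibits $\mathbb{A}^2-0$ as the homotopy pushout of its two open charts $\mathbb{G}_{m}\times\mathbb{A}^1$ and $\mathbb{A}^1\times\mathbb{G}_{m}$ along $\mathbb{G}_{m}\times\mathbb{G}_{m}$; contracting the two affine lines identifies $\mathbb{A}^2-0$ with the homotopy pushout of $\mathbb{G}_{m}\xleftarrow{\mathrm{pr}_2}\mathbb{G}_{m}\times\mathbb{G}_{m}\xrightarrow{\mathrm{pr}_1}\mathbb{G}_{m}$. The action $\bar\alpha$ preserves both charts, and after contracting the affine lines it restricts to the multiplication $\mu\colon\mathbb{G}_{m}\times\mathbb{G}_{m}\to\mathbb{G}_{m}$, $(a,t)\mapsto ta$, on the chart $\mathbb{G}_{m}\times\mathbb{A}^1$ and to the trivial (projection) action on the chart $\mathbb{A}^1\times\mathbb{G}_{m}$, compatibly on the overlap. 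Thus $\bar\alpha$ is the map of homotopy pushouts induced by the evident map of spans, one leg of which is $\mu$ and the other a projection. Since the reduced join and the simplicial suspension commute with homotopy pushouts in the relevant variable --- which is immediate from the double-mapping-cylinder description of the reduced join recalled in the introduction --- the Hopf construction $H(\bar\alpha)$ is correspondingly the map of homotopy pushouts assembled from $H(\mu)$, the (nullhomotopic) Hopf construction of a projection, and the Hopf construction of $(a,b,t)\mapsto(ta,b)$ on the overlap term.

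To conclude, by Lemma~\ref{Lemma 2.5} the $\mathbb{G}_{m}$-suspension of $H(\mu)$ is $\eta_{1+(2)}$, and one further simplicial suspension gives $\eta_{2+(2)}$. Matching this against the homotopy-pushout description of $H(\bar\alpha)$, the extra smash factor $\Sigma\mathbb{G}_{m}\cong\mathbb{P}^1$ contributed by the chart $\mathbb{A}^1\times\mathbb{G}_{m}$ together with the simplicial suspension plays exactly the role of the suspension coordinate, so that $H(\bar\alpha)$ becomes the $\mathbb{P}^1$-suspension of $H(\mu)$, i.e.\ $\eta_{2+(2)}$. The coordinate transpositions, and the factors of $\epsilon$ they introduce when one compares the two descriptions of the target $S^1\wedge S^1\wedge\mathbb{G}_{m}\wedge\mathbb{G}_{m}$, are absorbed using Lemma~\ref{lemma 2.1}, Corollaries~\ref{Corollary 2.2}--\ref{Corollary 2.4}, and the relation $\eta_{1+(2)}\circ h_{1+(3)}=0$ of \cite[Proposition~3.4.9]{dong2024motivictodabrackets}, which yields $\eta\circ\epsilon=\eta$ in the bidegrees occurring here.

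I expect the genuine difficulty to lie in this last step: one must carry the chain $\mathrm{SL}_{2}\cong\mathbb{A}^2-0\cong S^1\wedge\mathbb{G}_{m}^{\wedge2}$ of identifications through the homotopy pushouts and check that the coordinate permutation on the $\mathbb{G}_{m}^{\wedge2}$-summand of the overlap term --- the leg $(a,b,t)\mapsto(ta,b)$ --- contributes only an $\epsilon$ that is killed by $\eta\circ\epsilon=\eta$, and not a genuine obstruction; this is precisely what Corollary~\ref{Corollary 2.4} is tailored for. A second point that has to be verified rather than asserted is the compatibility of the Hopf construction with homotopy colimits in the source, which comes out of unwinding the definition of the reduced join.
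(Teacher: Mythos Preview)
Your strategy matches the paper's almost exactly: reduce $\mathrm{SL}_2$ to $\mathbb{A}^2-0$ via the last-column projection, pass through the pushout model coming from the elementary distinguished square, and then compare with $\mathrm{id}_{S^1}\wedge H(\mu)\wedge\mathrm{id}_{\mathbb{G}_m}$ using Lemma~\ref{Lemma 2.5}. The paper also shows that the cyclic permutation $\tau'$ on $\mathbb{G}_m^{\wedge3}$ is the identity via Corollary~\ref{Corollary 2.3}, exactly the kind of $\epsilon$-bookkeeping you anticipate.

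Where you and the paper diverge is precisely at the point you flag as the ``genuine difficulty''. You phrase it as a compatibility of the Hopf construction with a homotopy pushout in the source and say it ``comes out of unwinding the definition of the reduced join''. The paper does \emph{not} appeal to such a general principle. Instead it works with an explicit cofibrant model $\mathcal{Y}'$ for $S^1\wedge\mathbb{G}_m^{\wedge2}$, transports $\bar\alpha$ to a map $\alpha'''\colon\mathcal{Y}'\times\mathbb{G}_m\to\mathcal{Y}'$, and then proves the key rectangle
\[
\begin{tikzcd}
\mathcal{Y}'\ast\mathbb{G}_m \ar[r,"\sim"] \ar[d,"H(\alpha''')"'] & \Sigma(\mathcal{Y}'\wedge\mathbb{G}_m) & S^2\wedge\mathbb{G}_m^{\wedge3} \ar[l,"\sim"'] \ar[d,"\mathrm{id}_{S^1}\wedge H(\mu)\wedge\mathrm{id}_{\mathbb{G}_m}"] \\
\Sigma\mathcal{Y}' & & S^2\wedge\mathbb{G}_m^{\wedge2} \ar[ll,"\sim"']
\end{tikzcd}
\]
commutes by passing through the iterated join $(\mathbb{G}_m\ast\mathbb{G}_m)\ast\mathbb{G}_m$ and invoking the \emph{geometric realization functor} of \cite[Section~2]{dong2024motivictodabrackets}. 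Concretely, one writes down a sectionwise weak equivalence
\[
\sigma\colon |(\mathbb{G}_m\ast\mathbb{G}_m)\ast\mathbb{G}_m|\longrightarrow |S^1\wedge(\mathbb{G}_m\ast\mathbb{G}_m)\wedge\mathbb{G}_m|
\]
by an explicit barycentric formula and checks that the induced self-map of $I/\partial I\wedge I/\partial I$ is pointed-homotopic to the identity. This is the actual content that makes the comparison go through; it is not a formal consequence of the reduced join being a homotopy colimit, because the associativity/suspension comparison for iterated joins is only canonical up to a homotopy that must be produced and shown compatible with the maps $\tilde\mu$ and $\mu'$ in play.

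So your outline is correct, but the step you label as routine is exactly where the paper expends its effort, and it does so with a tool (the geometric realization functor) that you have not invoked. If you want to avoid that tool, you would need to supply an alternative argument identifying $(\mathbb{G}_m\ast\mathbb{G}_m)\ast\mathbb{G}_m\simeq S^1\wedge(\mathbb{G}_m\ast\mathbb{G}_m)\wedge\mathbb{G}_m$ compatibly with the maps induced by $\mu$ on both sides; merely citing that joins commute with homotopy colimits does not give you control over \emph{which} equivalence you obtain.
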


\begin{proof}
	We first explain explicitly how to identify $H(\alpha)$ with a morphism of motivic spheres. The morphism $\alpha$ corresponds under the projection onto the last column from $\mathrm{SL}_{2}$ to $\mathbb{A}^2-0$ to a pointed morphism $\alpha': \mathbb{A}^2-0\times \mathbb{G}_{m} \rightarrow \mathbb{A}^2-0 $ which is given by $((a,b),t)\mapsto (ta,b)$.\\
	
	Let $\chi$ denote the pushout of the diagram \[\begin{tikzcd}
		{\mathbb{G}_{m}\times \mathbb{G}_{m}} & {\mathbb{A}^1\times \mathbb{G}_{m}} \\
		{\mathbb{G}_{m}\times \mathbb{A}^1} & \cdot
		\arrow[hook, from=1-1, to=1-2]
		\arrow[hook, from=1-1, to=2-1]
	\end{tikzcd}\]
	The canonical map from $\chi$ to $\mathbb{A}^2-0$ is a pointed $\mathbb{A}^1$-weak equivalence, if we equip $\chi$ with the base point $(0,1)$ coming from $\mathbb{A}^1\times \mathbb{G}_{m}$. In particular, the morphism $\alpha'$ corresponds then to a pointed morphism $\alpha'': \chi \times \mathbb{G}_{m}\rightarrow \chi$ which is given by the same formula as for $\alpha'$. The Hopf construction yields a morphism $H(\alpha''): \Sigma(\chi\wedge \mathbb{G}_{m})\rightarrow \Sigma\chi$.\\
	
	Furthermore we also have the following commutative diagram \[\begin{tikzcd}
		{\mathbb{A}^1\times \mathbb{G}_{m}} & {\mathbb{G}_{m}\times \mathbb{G}_{m}} & {\mathbb{G}_{m}\times \mathbb{A}^1} \\
		{\mathbb{A}^1\wedge \mathbb{A}^1} & {\mathbb{G}_{m}\wedge \mathbb{G}_{m}} & {\mathbb{A}^1\wedge \mathbb{A}^1} \\
		\ast & {\mathbb{G}_{m}\wedge \mathbb{G}_{m}} & {C(\mathbb{A}^1\wedge \mathbb{A}^1)} \\
		\ast & {\mathbb{G}_{m}\wedge \mathbb{G}_{m}} & {C(\mathbb{G}_{m}\wedge \mathbb{G}_{m})}
		\arrow[from=1-1, to=2-1]
		\arrow[hook', from=1-2, to=1-1]
		\arrow[hook, from=1-2, to=1-3]
		\arrow[from=1-2, to=2-2]
		\arrow[from=1-3, to=2-3]
		\arrow[from=2-1, to=3-1]
		\arrow[hook', from=2-2, to=2-1]
		\arrow[hook, from=2-2, to=2-3]
		\arrow["{\mathrm{id}}"', from=2-2, to=3-2]
		\arrow[hook, from=2-3, to=3-3]
		\arrow[from=3-2, to=3-1]
		\arrow[hook, from=3-2, to=3-3]
		\arrow[from=4-1, to=3-1]
		\arrow["{\mathrm{id}}", from=4-2, to=3-2]
		\arrow[from=4-2, to=4-1]
		\arrow[hook, from=4-2, to=4-3]
		\arrow[hook', from=4-3, to=3-3]
	\end{tikzcd}\]
	where $C(\mathbb{A}^1\wedge \mathbb{A}^1)$ is the cone on $\mathbb{A}^1\wedge \mathbb{A}^1$ and $C(\mathbb{G}_{m}\wedge \mathbb{G}_{m})$ the cone on $\mathbb{G}_{m}\wedge \mathbb{G}_{m}$. This commutative diagram induces a zig-zag of pointed $\mathbb{A}^1$-weak equivalences between $\chi$ and $S^1\wedge \mathbb{G}_{m}^2$. Using these weak equivalences we get from $H(\alpha'')$ a morphism $S^2\wedge\mathbb{G}_{m}^3\rightarrow S^2\wedge \mathbb{G}_{m}^2$. Now we show that this morphism is equal to $\eta_{2+(2)}$. By Lemma 2.5 we have $\mathrm{id}_{S^1}\wedge\mu\wedge \mathrm{id}_{\mathbb{G}_{m}}$ is $\eta_{2+(2)}$.\\
	
	Let $\mathcal{Y}$ denote the pushout of \[\begin{tikzcd}
		{\mathbb{G}_{m}\wedge \mathbb{G}_{m}} & {\mathbb{A}^1\wedge \mathbb{A}^1} \\
		{\mathbb{A}^1\wedge \mathbb{A}^1}
		\arrow[hook, from=1-1, to=1-2]
		\arrow[hook, from=1-1, to=2-1]
	\end{tikzcd}\] and $\mathcal{Y'}$ the pushout of \[\begin{tikzcd}
	{\mathbb{G}_{m}\wedge \mathbb{G}_{m}} & {C(\mathbb{A}^1\wedge \mathbb{A}^1)} \\
	\ast & \cdot
	\arrow[hook, from=1-1, to=1-2]
	\arrow[hook, from=1-1, to=2-1]
	\end{tikzcd}\]
Then we only need to show that the following diagram (1) in $\mathcal{H}_{\bullet}(\mathbb{Z})$ is commutative \[\begin{tikzcd}
	{\mathcal{X}\ast \mathbb{G}_{m}} & {\Sigma(\mathcal{X}\wedge \mathbb{G}_{m})} & {\Sigma(\mathcal{Y}\wedge \mathbb{G}_{m})} & {\Sigma(\mathcal{Y}'\wedge \mathbb{G}_{m})} & {S^2\wedge \mathbb{G}_{m}^3} \\
	{\Sigma\mathcal{X}} &&&& {S^2\wedge \mathbb{G}_{m}^3} \\
	{\Sigma\mathcal{Y}} & {\Sigma\mathcal{Y}'} &&& {S^2\wedge \mathbb{G}_{m}^2}
	\arrow["\sim", from=1-1, to=1-2]
	\arrow["{H(\alpha'')}"', from=1-1, to=2-1]
	\arrow["\sim", from=1-2, to=1-3]
	\arrow["\sim", from=1-3, to=1-4]
	\arrow["\sim"', from=1-5, to=1-4]
	\arrow["{\mathrm{id}_{S^1}\wedge \mathrm{id}_{S^1}\wedge \tau'}"', from=1-5, to=2-5]
	\arrow["\sim"', from=2-1, to=3-1]
	\arrow["{\mathrm{id}_{S^1}\wedge H(\mu)\wedge \mathrm{id}_{\mathbb{G}_{m}}}"', from=2-5, to=3-5]
	\arrow["\sim", from=3-1, to=3-2]
	\arrow["\sim"', from=3-5, to=3-2]
\end{tikzcd}\] where $\tau': \mathbb{G}_{m}^3\rightarrow \mathbb{G}_{m}^3: x\wedge y\wedge z\mapsto z\wedge x\wedge y$ and the weak equivalences which are induced by the weak equivalences between the pushouts are indicated by $\sim$.\\ 

We first observe that $\mathrm{id}_{S^1}\wedge \mathrm{id}_{S^1}\wedge \tau'$ is actually equal to the identity. The morphism $\mathrm{id}_{S^1}\wedge \mathrm{id}_{S^1}\wedge \tau'$ can be written as the composite of the following two morphisms
\begin{align*}
g: S^2\wedge \mathbb{G}_{m}^3\rightarrow S^2\wedge \mathbb{G}_{m}^3; 	s\wedge t\wedge x\wedge y\wedge z\mapsto s\wedge t\wedge x\wedge z\wedge y 
\end{align*} and
\begin{align*}
h: S^2\wedge \mathbb{G}_{m}^3\rightarrow S^2\wedge \mathbb{G}_{m}^3; 	s\wedge t\wedge x\wedge y\wedge z\mapsto s\wedge t\wedge y\wedge x\wedge y \ \ \cdot
\end{align*} 
So we have $\mathrm{id}_{S^1}\wedge \mathrm{id}_{S^1}\wedge \tau'=h\circ g$. By Corollary 2.3 the morphism $g$ equals to $\mathrm{id}_{S^1\wedge S^1\wedge \mathbb{G}_{m}}\wedge \epsilon_{(1)}\wedge \mathrm{id}_ {\mathbb{G}_{m}}$. Again by Corollary 2.3 we have $h= \epsilon_{2+(3)}$ where $\epsilon_{2+(3)}$ is given by $	s\wedge t\wedge x\wedge y\wedge z\mapsto s\wedge t\wedge x^{-1}\wedge y\wedge z $. Since $\epsilon_{1+(2)}=\mathrm{id}_{	S^1\wedge \mathbb{G}_{m}}\wedge \epsilon_{(1)}$, we get  $\mathrm{id}_{S^1\wedge S^1\wedge \mathbb{G}_{m}}\wedge \epsilon_{(1)}\wedge \mathrm{id}_ {\mathbb{G}_{m}}= \epsilon_{2+(3)}$. Hence $\mathrm{id}_{S^1}\wedge \mathrm{id}_{S^1}\wedge \tau'$ is equal to the composite $\epsilon_{2+(3)}\circ \epsilon_{2+(3)} $ which is the identity.\\

Recall that the weak equivalence $\mathcal{X}\ast\mathbb{G}_{m}\rightarrow \Sigma(\mathcal{X}\wedge \mathbb{G}_{m})$ can be written as the composite \[\begin{tikzcd}
	{\mathcal{X}\ast \mathbb{G}_{m}} & {\Sigma(\mathcal{X}\times \mathbb{G}_{m})} & {\Sigma(\mathcal{X}\wedge \mathbb{G}_{m})} & \cdot
	\arrow[from=1-1, to=1-2]
	\arrow[from=1-2, to=1-3]
\end{tikzcd}\] In particular there is a pointed morphism $\alpha''': \mathcal{Y'}\times \mathbb{G}_{m} \rightarrow \mathcal{Y'}$ which is also given by $((a,b),t)\mapsto (ta,b)$. Therefore the diagram \[\begin{tikzcd}
{\mathcal{X}\ast \mathbb{G}_{m}} & {\Sigma(\mathcal{X}\wedge \mathbb{G}_{m})} & {\Sigma(\mathcal{Y}\wedge \mathbb{G}_{m})} & {\Sigma(\mathcal{Y}'\wedge \mathbb{G}_{m})} \\
{\Sigma\mathcal{X}} \\
{\Sigma\mathcal{Y}} & {\Sigma\mathcal{Y}'} && {\mathcal{Y}'\ast \mathbb{G}_{m}}
\arrow["\sim", from=1-1, to=1-2]
\arrow["{{H(\alpha'')}}"', from=1-1, to=2-1]
\arrow["\sim"{description}, from=1-1, to=3-4]
\arrow["\sim", from=1-2, to=1-3]
\arrow["\sim", from=1-3, to=1-4]
\arrow["\sim"', from=2-1, to=3-1]
\arrow["\sim", from=3-1, to=3-2]
\arrow["\sim"', from=3-4, to=1-4]
\arrow["{H(\alpha''')}"{description}, from=3-4, to=3-2]
\end{tikzcd}\] commutes where $\mathcal{X}\ast \mathbb{G}_{m}\rightarrow \mathcal{Y}'\ast \mathbb{G}_{m}$ is the weak equivalence induced by the weak equivalence from $\mathcal{X}$ to $\mathcal{Y'}$.\\ 

Especially we get \[\begin{tikzcd}
{\mathcal{X}\ast \mathbb{G}_{m}} & {\Sigma(\mathcal{X}\wedge \mathbb{G}_{m})} & {\Sigma(\mathcal{Y}\wedge \mathbb{G}_{m})} & {\Sigma(\mathcal{Y}'\wedge \mathbb{G}_{m})} & {S^2\wedge \mathbb{G}_{m}^3} \\
{\Sigma\mathcal{X}} && {\mathcal{Y}'\ast \mathbb{G}_{m}} && {S^2\wedge \mathbb{G}_{m}^3} \\
{\Sigma\mathcal{Y}} & {\Sigma\mathcal{Y}'} &&& {S^2\wedge \mathbb{G}_{m}^2} & \cdot
\arrow["\sim", from=1-1, to=1-2]
\arrow["{{H(\alpha'')}}"', from=1-1, to=2-1]
\arrow["\sim"{description}, from=1-1, to=2-3]
\arrow["\sim", from=1-2, to=1-3]
\arrow["\sim", from=1-3, to=1-4]
\arrow["\sim"', from=1-5, to=1-4]
\arrow["{{\mathrm{id}_{S^1}\wedge \mathrm{id}_{S^1}\wedge \tau'}}"', from=1-5, to=2-5]
\arrow["\sim"', from=2-1, to=3-1]
\arrow["\sim"{description}, from=2-3, to=1-4]
\arrow["{H(\alpha''')}"{description}, from=2-3, to=3-2]
\arrow["{{\mathrm{id}_{S^1}\wedge H(\mu)\wedge \mathrm{id}_{\mathbb{G}_{m}}}}"', from=2-5, to=3-5]
\arrow["\sim", from=3-1, to=3-2]
\arrow["\sim"', from=3-5, to=3-2]
\end{tikzcd}\] It follows now that we only have to show that the diagram \[\begin{tikzcd}
{\mathcal{Y}'\ast \mathbb{G}_{m}} && {\Sigma(\mathcal{Y}'\wedge \mathbb{G}_{m})} & {S^2\wedge \mathbb{G}_{m}^3} \\
&&& {S^2\wedge \mathbb{G}_{m}^3} \\
{\Sigma\mathcal{Y}'} &&& {S^2\wedge \mathbb{G}_{m}^2} & \cdot
\arrow["\sim", from=1-1, to=1-3]
\arrow["{H(\alpha''')}"', from=1-1, to=3-1]
\arrow["\sim"', from=1-4, to=1-3]
\arrow["{{\mathrm{id}_{S^1}\wedge \mathrm{id}_{S^1}\wedge \tau'}}"', from=1-4, to=2-4]
\arrow["{{\mathrm{id}_{S^1}\wedge H(\mu)\wedge \mathrm{id}_{\mathbb{G}_{m}}}}"', from=2-4, to=3-4]
\arrow["\sim"', from=3-4, to=3-1]
\end{tikzcd}\] is commutative in $\mathcal{H}_{\bullet}(\mathbb{Z})$.\\

We first analyse the composite $\mathrm{id}_{S^1}\wedge H(\mu)\wedge \mathrm{id}_{\mathbb{G}_{m}}\circ \mathrm{id}_{S^1}\wedge \mathrm{id}_{S^1}\wedge \tau'$. Recall that $H(\mu)$ is the composite \[\begin{tikzcd}
	{\Sigma(\mathbb{G}_{m}\wedge \mathbb{G}_{m})} & {\mathbb{G}_{m}\ast\mathbb{G}_{m}} & {\Sigma\mathbb{G}_{m}}
	\arrow["\sim", from=1-1, to=1-2]
	\arrow["{\tilde{\mu}}", from=1-2, to=1-3]
\end{tikzcd}\] where $\tilde{\mu}$ is the morphism induced by $\mu$. Hence we can consider the following diagram \[\begin{tikzcd}
{S^1\wedge(\mathbb{G}_{m}\ast \mathbb{G}_{m})\wedge \mathbb{G}_{m}} & {S^2\wedge \mathbb{G}_{m}^3} \\
{S^2\wedge \mathbb{G}_{m}^2} & {S^2\wedge \mathbb{G}_{m}^3} \\
{S^1\wedge(\mathbb{G}_{m}\ast \mathbb{G}_{m})} & {(\mathbb{G}_{m}\ast \mathbb{G}_{m})\ast \mathbb{G}_{m}}
\arrow["\sim", from=1-1, to=1-2]
\arrow["{\mathrm{id}_{S^1}\wedge \tilde{\mu}\wedge \mathrm{id}_{\mathbb{G}_{m}}}"', from=1-1, to=2-1]
\arrow["{\mathrm{id}_{S^1}\wedge \mathrm{id}_{S^1}\wedge \tau'}"', from=2-2, to=1-2]
\arrow["\sim", from=3-1, to=2-1]
\arrow["\sim"', from=3-2, to=2-2]
\arrow["{\mu'}", from=3-2, to=3-1]
\end{tikzcd}\]
The morphism $\mu'$ is induced by the pointed morphism $(\mathbb{G}_{m}\ast \mathbb{G}_{m})\times \mathbb{G}_{m}\rightarrow \mathbb{G}_{m}\ast \mathbb{G}_{m}; (\overline{(t,x,y)},z)\mapsto \overline{(t,zx,y)}$. In the next step we have to use the geometric realization functor $|\cdot|$ which the author developed in her doctoral thesis \cite[Section 2]{dong2024motivictodabrackets}. Especially, there is a sectionwise weak equivalence $\sigma: |(\mathbb{G}_{m}\ast \mathbb{G}_{m})\ast \mathbb{G}_{m}|\rightarrow |S^1\wedge(\mathbb{G}_{m}\ast \mathbb{G}_{m})\wedge \mathbb{G}_{m}|$ such that the diagram \[\begin{tikzcd}
	{|S^1\wedge(\mathbb{G}_{m}\ast \mathbb{G}_{m})\wedge \mathbb{G}_{m}|} & {|S^2\wedge \mathbb{G}_{m}^3|} \\
	{|S^2\wedge \mathbb{G}_{m}^2|} & {|S^2\wedge \mathbb{G}_{m}^3|} \\
	{|S^1\wedge(\mathbb{G}_{m}\ast \mathbb{G}_{m})|} & {|(\mathbb{G}_{m}\ast \mathbb{G}_{m})\ast \mathbb{G}_{m}|}
	\arrow["\sim", from=1-1, to=1-2]
	\arrow["{|\mathrm{id}_{S^1}\wedge \tilde{\mu}\wedge \mathrm{id}_{\mathbb{G}_{m}}|}"', from=1-1, to=2-1]
	\arrow["{|\mathrm{id}_{S^1}\wedge \mathrm{id}_{S^1}\wedge \tau'|}"', from=2-2, to=1-2]
	\arrow["\sim", from=3-1, to=2-1]
	\arrow["\sigma"{description}, from=3-2, to=1-1]
	\arrow["\sim"', from=3-2, to=2-2]
	\arrow["{|\mu'|}", from=3-2, to=3-1]
\end{tikzcd}\] is commutative in the corresponding homotopy category. The weak equivalence $\sigma$ is given by $\overline{(c,(\overline{b,x,y}),z)}\mapsto \begin{cases}
(1-c)b\wedge \overline{(\frac{(1-c)(1-b)}{1-(1-c)b},z,x)}\wedge y  & \text{if} \  c\neq 0 \ \text {or}  \ b\neq 1\ \\
\ast & \text{if} \ c= 0 \ \text {and}  \ b=1\
\end{cases}
$ for all $x,y,z\in  \mathbb{G}_{m}$.\\

Now the pointed continuous map \begin{align*}
I/\partial I\wedge I/\partial I \rightarrow I/\partial I\wedge I/\partial I; c\wedge b\mapsto \begin{cases}
		(1-c)b\wedge\frac{(1-c)(1-b)}{1-(1-c)b}  & \text{if} \  c\neq 0 \ \text {or}  \ b\neq 1\ \\
\ast & \text{if} \ c= 0 \ \text {and}  \ b=1\
	\end{cases}
\end{align*}
is pointed homotopic to the identity. Hence the diagram \[\begin{tikzcd}
	{|S^1\wedge(\mathbb{G}_{m}\ast \mathbb{G}_{m})\wedge \mathbb{G}_{m}|} & {|S^2\wedge \mathbb{G}_{m}^3|} \\
	{|S^2\wedge \mathbb{G}_{m}^2|} & {|S^2\wedge \mathbb{G}_{m}^3|} \\
	{|S^1\wedge(\mathbb{G}_{m}\ast \mathbb{G}_{m})|} & {|(\mathbb{G}_{m}\ast \mathbb{G}_{m})\ast \mathbb{G}_{m}|}
	\arrow["\sim", from=1-1, to=1-2]
	\arrow["{|\mathrm{id}_{S^1}\wedge \tilde{\mu}\wedge \mathrm{id}_{\mathbb{G}_{m}}|}"', from=1-1, to=2-1]
	\arrow["{|\mathrm{id}_{S^1}\wedge \mathrm{id}_{S^1}\wedge \tau'|}"', from=2-2, to=1-2]
	\arrow["\sim", from=3-1, to=2-1]
	\arrow["\sigma"{description}, from=3-2, to=1-1]
	\arrow["\sim"', from=3-2, to=2-2]
	\arrow["{|\mu'|}", from=3-2, to=3-1]
\end{tikzcd}\] is commutative in the corresponding homotopy category. It follows that also the diagram \[\begin{tikzcd}
{S^1\wedge(\mathbb{G}_{m}\ast \mathbb{G}_{m})\wedge \mathbb{G}_{m}} & {S^2\wedge \mathbb{G}_{m}^3} \\
{S^2\wedge \mathbb{G}_{m}^2} & {S^2\wedge \mathbb{G}_{m}^3} \\
{S^1\wedge(\mathbb{G}_{m}\ast \mathbb{G}_{m})} & {(\mathbb{G}_{m}\ast \mathbb{G}_{m})\ast \mathbb{G}_{m}}
\arrow["{{\mathrm{id}_{S^1}\wedge \tilde{\mu}\wedge \mathrm{id}_{\mathbb{G}_{m}}}}"', from=1-1, to=2-1]
\arrow["\sim"', from=1-2, to=1-1]
\arrow["{{\mathrm{id}_{S^1}\wedge \mathrm{id}_{S^1}\wedge \tau'}}"', from=2-2, to=1-2]
\arrow["\sim", from=2-2, to=3-2]
\arrow["\sim", from=3-1, to=2-1]
\arrow["{{\mu'}}", from=3-2, to=3-1]
\end{tikzcd}\] commutes in $\mathcal{H}_{\bullet}(\mathbb{Z})$. One concludes that the composite $\mathrm{id}_{S^1}\wedge H(\mu)\wedge \mathrm{id}_{\mathbb{G}_{m}}\circ \mathrm{id}_{S^1}\wedge \mathrm{id}_{S^1}\wedge \tau'$ can be written as the composite \[\begin{tikzcd}
{S^2\wedge \mathbb{G}_{m}^3} & {(\mathbb{G}_{m}\ast \mathbb{G}_{m})\ast \mathbb{G}_{m}} & {S^1\wedge(\mathbb{G}_{m}\ast \mathbb{G}_{m})} & {S^2\wedge \mathbb{G}_{m}^2} & \cdot
\arrow["\sim", from=1-1, to=1-2]
\arrow["{\mu'}", from=1-2, to=1-3]
\arrow["\sim", from=1-3, to=1-4]
\end{tikzcd}\]
Altogether we get the diagram \[\begin{tikzcd}
	{\mathcal{Y}'\ast \mathbb{G}_{m}} && {\Sigma(\mathcal{Y}'\wedge \mathbb{G}_{m})} & {S^2\wedge \mathbb{G}_{m}^3} \\
	&&& {(\mathbb{G}_{m}\ast \mathbb{G}_{m})\ast \mathbb{G}_{m}} \\
	&&& {S^1\wedge(\mathbb{G}_{m}\ast \mathbb{G}_{m})} \\
	{\Sigma\mathcal{Y}'} &&& {S^2\wedge \mathbb{G}_{m}^2} & \cdot
	\arrow["\sim", from=1-1, to=1-3]
	\arrow["{H(\alpha''')}"', from=1-1, to=4-1]
	\arrow["\sim"', from=1-4, to=1-3]
	\arrow["\sim"', from=2-4, to=1-4]
	\arrow["{\mu'}", from=2-4, to=3-4]
	\arrow[from=3-4, to=4-4]
	\arrow["\sim", from=4-4, to=4-1]
\end{tikzcd}\]
Note that there is now also a weak equivalence $(S^1\wedge\mathbb{G}_{m}^2)\ast \mathbb{G}_{m}\rightarrow \mathcal{Y}'\ast \mathbb{G}_{m} $ which is induced by the weak equivalnce $S^1\wedge\mathbb{G}_{m}^2\rightarrow \mathcal{Y}'$. Hence we can consider the following diagram \[\begin{tikzcd}
	{\mathcal{Y}'\ast \mathbb{G}_{m}} && {\Sigma(\mathcal{Y}'\wedge \mathbb{G}_{m})} & {S^2\wedge \mathbb{G}_{m}^3} \\
	& {(S^1\wedge\mathbb{G}_{m}^2)\ast \mathbb{G}_{m}} && {(\mathbb{G}_{m}\ast \mathbb{G}_{m})\ast \mathbb{G}_{m}} \\
	&&& {S^1\wedge(\mathbb{G}_{m}\ast \mathbb{G}_{m})} \\
	{\Sigma\mathcal{Y}'} &&& {S^2\wedge \mathbb{G}_{m}^2} & \cdot
	\arrow["\sim", from=1-1, to=1-3]
	\arrow["{H(\alpha''')}"', from=1-1, to=4-1]
	\arrow["\sim"', from=1-4, to=1-3]
	\arrow["\sim", from=2-2, to=1-1]
	\arrow["\sim"', from=2-4, to=1-4]
	\arrow["\sim", from=2-4, to=2-2]
	\arrow["{\mu'}", from=2-4, to=3-4]
	\arrow[from=3-4, to=4-4]
	\arrow["\sim", from=4-4, to=4-1]
\end{tikzcd}\]
By inspection we see directly that the two inner diagrams are commutative. Thus the whole diagram commutes and the claim follows.\\
\end{proof}

Let $\Delta_{(2)}:\mathbb{G}_{m}\rightarrow \mathbb{G}_{m}\wedge \mathbb{G}_{m}; x\mapsto x\wedge x $ be the diagonal morphism. Let $\pi_1: \mathbb{G}_{m}\times\mathbb{G}_{m}\rightarrow \mathbb{G}_{m}$ be the projection onto the first factor and $\pi_2: \mathbb{G}_{m}\times\mathbb{G}_{m}\rightarrow \mathbb{G}_{m}$ the projection onto the second factor. Let $p:\mathbb{G}_{m}\times\mathbb{G}_{m}\rightarrow \mathbb{G}_{m}\wedge\mathbb{G}_{m} $ be the canonical projection.\\

Let $i_1:\Sigma \mathbb{G}_{m}\rightarrow \Sigma\mathbb{G}_{m}\vee \Sigma\mathbb{G}_{m}\vee\Sigma(\mathbb{G}_{m}\wedge\mathbb{G}_{m}) $ be the inclusion into the first wedge summand and $i_2:\Sigma \mathbb{G}_{m}\rightarrow \Sigma\mathbb{G}_{m}\vee \Sigma\mathbb{G}_{m}\vee\Sigma(\mathbb{G}_{m}\wedge\mathbb{G}_{m}) $ the inclusion into the second wedge summand. Let $i:\Sigma(\mathbb{G}_{m}\wedge \mathbb{G}_{m})\rightarrow \Sigma\mathbb{G}_{m}\vee \Sigma\mathbb{G}_{m}\vee\Sigma(\mathbb{G}_{m}\wedge\mathbb{G}_{m}) $ be the canonical inclusion. Then in $\mathcal{H}_{\bullet}(\mathbb{Z})$ the morphism $j:=i_1\circ\Sigma\pi_1 +i_2\circ\Sigma\pi_2+i\circ\Sigma p$ is an isomorphism. Especially, the composite 
\[\begin{tikzcd}
	{\Sigma(\mathbb{G}_{m}\wedge\mathbb{G}_{m})} & {\Sigma\mathbb{G}_{m}\vee \Sigma\mathbb{G}_{m}\vee\Sigma(\mathbb{G}_{m}\wedge\mathbb{G}_{m})} & {\Sigma(\mathbb{G}_{m}\times\mathbb{G}_{m})} & {\Sigma\mathbb{G}_{m}}
	\arrow["i", hook, from=1-1, to=1-2]
	\arrow["{{{j^{-1}}}}", from=1-2, to=1-3]
	\arrow["{\Sigma\mu}", from=1-3, to=1-4]
\end{tikzcd}\]
is also equal to the Hopf construction on $\mu$.\\

\begin{lem}\label{Lemma 2.7}
	The relation $1_{1+(2)}+\eta_{1+(2)}\circ \Delta_{1+(3)}=-\epsilon_{1+(2)}$ holds.\\
\end{lem}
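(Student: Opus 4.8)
The plan is to imitate the stable derivation of $1+\eta\rho=-\epsilon$, exploiting the James-type splitting of $\Sigma(\mathbb{G}_{m}\times\mathbb{G}_{m})$ recorded just above the statement. Write the squaring map as $q_{(1)}=\mu\circ\delta$, where $\delta\colon\mathbb{G}_{m}\to\mathbb{G}_{m}\times\mathbb{G}_{m}$ is the diagonal $x\mapsto(x,x)$; then $\pi_{1}\circ\delta=\pi_{2}\circ\delta=\mathrm{id}_{\mathbb{G}_{m}}$ and $p\circ\delta=\Delta_{(2)}$. Suspending once simplicially, $q_{1+(1)}=\Sigma\mu\circ\Sigma\delta$ in $\mathcal{H}_{\bullet}(\mathbb{Z})(S^{1+(1)},S^{1+(1)})$, and the goal is to evaluate the right-hand side through the splitting. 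Throughout I will use the standard additivity properties of composition on homotopy classes of maps out of a suspension: post-composition with any map is additive, and pre-composition with a suspension is additive.

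First I would identify $\Sigma\mu$ in terms of $j$. Precomposing $j=i_{1}\circ\Sigma\pi_{1}+i_{2}\circ\Sigma\pi_{2}+i\circ\Sigma p$ with the suspension of the inclusion $\iota_{1}\colon x\mapsto(x,1)$ (resp.\ $\iota_{2}\colon x\mapsto(1,x)$), using $\pi_{1}\circ\iota_{1}=\mathrm{id}$ while the other two of $\pi_{1},\pi_{2},p$ collapse $\iota_{1}$ (likewise for $\iota_{2}$), one gets $j\circ\Sigma(\iota_{k})=i_{k}$, hence $j^{-1}\circ i_{k}=\Sigma(\iota_{k})$ and $\Sigma\mu\circ j^{-1}\circ i_{k}=\Sigma(\mu\circ\iota_{k})=1_{1+(1)}$. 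Since also $\Sigma\mu\circ j^{-1}\circ i=H(\mu)$ by the remark immediately preceding the statement, writing $\Sigma\mu=(\Sigma\mu\circ j^{-1})\circ j$ and distributing post-composition over the sum yields
\[
\Sigma\mu=\Sigma\pi_{1}+\Sigma\pi_{2}+H(\mu)\circ\Sigma p .
\]

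Now I would precompose this identity with the suspension $\Sigma\delta$ and substitute $\pi_{k}\circ\delta=\mathrm{id}$, $p\circ\delta=\Delta_{(2)}$, obtaining
\[
q_{1+(1)}=\Sigma\mu\circ\Sigma\delta=1_{1+(1)}+1_{1+(1)}+H(\mu)\circ\Sigma\Delta_{(2)} .
\]
Feeding in $q_{1+(1)}=1_{1+(1)}-\epsilon_{1+(1)}$ from \cite[Proposition 3.4.3]{dong2024motivictodabrackets} gives $H(\mu)\circ\Sigma\Delta_{(2)}=-1_{1+(1)}-\epsilon_{1+(1)}$. Applying $(-)\wedge\mathrm{id}_{\mathbb{G}_{m}}$ to this identity, the left-hand side becomes $(H(\mu)\wedge\mathrm{id}_{\mathbb{G}_{m}})\circ(\Sigma\Delta_{(2)}\wedge\mathrm{id}_{\mathbb{G}_{m}})=\eta_{1+(2)}\circ\Delta_{1+(3)}$, using that $H(\mu)\wedge\mathrm{id}_{\mathbb{G}_{m}}=\eta_{1+(2)}$ is precisely Lemma~\ref{Lemma 2.5} (this is the reason a single $\mathbb{G}_{m}$-suspension is necessary: $H(\mu)$ itself is not $\eta_{1+(1)}$); the right-hand side becomes $-1_{1+(2)}-\epsilon_{1+(2)}$, using $\epsilon_{1+(1)}\wedge\mathrm{id}_{\mathbb{G}_{m}}=\epsilon_{1+(2)}$, which one reads off from Corollary~\ref{Corollary 2.3} together with $\epsilon_{1+(2)}=\mathrm{id}_{S^{1}\wedge\mathbb{G}_{m}}\wedge\epsilon_{(1)}$ (\cite[Lemma 3.4.7]{dong2024motivictodabrackets}). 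This is exactly the asserted relation $1_{1+(2)}+\eta_{1+(2)}\circ\Delta_{1+(3)}=-\epsilon_{1+(2)}$.

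I expect the main obstacle to be the final step, namely making the two identifications $\Sigma\Delta_{(2)}\wedge\mathrm{id}_{\mathbb{G}_{m}}=\Delta_{1+(3)}$ and $\epsilon_{1+(1)}\wedge\mathrm{id}_{\mathbb{G}_{m}}=\epsilon_{1+(2)}$ precise: this requires tracking the permutations of the $\mathbb{G}_{m}$-coordinates carefully and absorbing any stray transposition of two $\mathbb{G}_{m}$-factors by means of Corollaries~\ref{Corollary 2.3} and \ref{Corollary 2.4} (and, should it arise, the relation $\eta_{1+(2)}\circ h_{1+(3)}=0$ of \cite[Proposition 3.4.9]{dong2024motivictodabrackets}, which kills the $\epsilon$-discrepancy). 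A secondary point to watch is that pre-composition distributes over sums only against a suspension; this hypothesis is plainly met at each place it is used ($\Sigma(\iota_{k})$ and $\Sigma\delta$), but it should be recorded.
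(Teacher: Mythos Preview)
Your proposal is correct and follows essentially the same approach as the paper: factor $q_{1+(1)}=\Sigma\mu\circ\Sigma\tilde{\Delta}$, push it through the splitting $j$ of $\Sigma(\mathbb{G}_{m}\times\mathbb{G}_{m})$ to obtain $q_{1+(1)}=2\cdot 1_{1+(1)}+H(\mu)\circ\Delta_{1+(2)}$, invoke $q_{1+(1)}=1_{1+(1)}-\epsilon_{1+(1)}$, and then $\mathbb{G}_{m}$-suspend using Lemma~\ref{Lemma 2.5}. The only cosmetic difference is that you first decompose $\Sigma\mu$ and then precompose with $\Sigma\delta$, whereas the paper first computes $j\circ\Sigma\tilde{\Delta}$ and then postcomposes with $\Sigma\mu\circ j^{-1}$; the paper also leaves implicit the bookkeeping you flag in your final paragraph.
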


\begin{proof}
	We first consider the composite \[\begin{tikzcd}
		{\Sigma\mathbb{G}_{m}} & {\Sigma(\mathbb{G}_{m}\times \mathbb{G}_{m})} & {\Sigma\mathbb{G}_{m}}
		\arrow["{\Sigma\tilde{\Delta}}", from=1-1, to=1-2]
		\arrow["{\Sigma\mu}", from=1-2, to=1-3]
	\end{tikzcd}\] where $\tilde{\Delta}: \mathbb{G}_{m}\rightarrow \mathbb{G}_{m}\times \mathbb{G}_{m}$ is the corresponding diagonal morphism. Moreover this composite is just $q_{1+(1)}$. Recall that $q_{(1)}:\mathbb{G}_{m}\rightarrow \mathbb{G}_{m}; x\mapsto x^2$ is the quadratic map. On the other hand we can take the splitting and see that the composite is also equal to the following composite\[\begin{tikzcd}
	{\Sigma\mathbb{G}_{m}} && {\Sigma\mathbb{G}_{m}\vee \Sigma\mathbb{G}_{m}\vee\Sigma(\mathbb{G}_{m}\wedge\mathbb{G}_{m})} && {\Sigma\mathbb{G}_{m}} & \cdot
	\arrow["{{j\circ \Sigma\tilde{\Delta}}}", from=1-1, to=1-3]
	\arrow["{\Sigma\mu\circ j^{-1}}", from=1-3, to=1-5]
	\end{tikzcd}\]
	Now we have \begin{align*}
	j\circ \Sigma\tilde{\Delta}= (i_1\circ\Sigma\pi_1 +i_2\circ\Sigma\pi_2+i\circ\Sigma p)\circ \Sigma\tilde{\Delta}\\=
	i_1+i_2+i\circ \Delta_{1+(2)} 
		\end{align*} Thus we get \begin{align*}
		\Sigma\mu\circ j^{-1}\circ(i_1+i_2+i\circ \Delta_{1+(2)})=1_{1+(1)}+ 1_{1+(1)}+H(\mu)\circ \Delta_{1+(2)} 
		\end{align*}  
where $1_{1+(1)}$ denotes the identity on $\Sigma\mathbb{G}_{m}$. It follows that $2_{1+(1)}+H(\mu)\circ \Delta_{1+(2)}=q_{1+(1)}$. In the author's doctoral thesis \cite[3.4.3]{dong2024motivictodabrackets} the relation $q_{1+(1)}=1_{1+(1)}-\epsilon_{1+(1)}=:h_{1+(1)}$ is proved. As a consequence we get $2_{1+(1)}+H(\mu)\circ \Delta_{1+(2)}=1_{1+(1)}-\epsilon_{1+(1)}$. Therefore the relation $1_{1+(1)}+H(\mu)\circ \Delta_{1+(2)}=-\epsilon_{1+(1)}$ holds. By Lemma 2.5 we obtain the relation $1_{1+(2)}+\eta_{1+(2)}\circ \Delta_{1+(3)}=-\epsilon_{1+(2)}$.\\
\end{proof}

\section{Unstable null-Hopf relations}
\

The second motivic Hopf map is defined to be the Hopf construction on the multiplication map $\xi: \mathrm{SL}_{2}\times \mathrm{SL}_{2}\rightarrow \mathrm{SL}_{2} $. The Hopf construction $H(\xi)$ is a morphism from $\Sigma(\mathrm{SL}_{2}\wedge \mathrm{SL}_{2})\rightarrow \Sigma \mathrm{SL}_{2}$. Since $\mathrm{SL}_{2}$ can be identified as in Section 1 with $S^1\wedge\mathbb{G}_{m}^2 $, the Hopf construction $H(\xi)$ can be considered as a morphism from $S^1\wedge S^1\wedge\mathbb{G}_{m}^2\wedge S^1\wedge\mathbb{G}_{m}^2$ to $S^2\wedge\mathbb{G}_{m}^2$. Now we take the isomorphism $\lambda: S^3\wedge\mathbb{G}_{m}^4\rightarrow S^1\wedge S^1\wedge\mathbb{G}_{m}^2\wedge S^1\wedge\mathbb{G}_{m}^2$ which is given by $ t_1\wedge t_2\wedge t_3\wedge x_1\wedge x_2\wedge x_3\wedge x_4\mapsto t_1\wedge t_2\wedge  x_1\wedge x_2\wedge t_3\wedge  x_3\wedge x_4$. Then we define $\nu_{2+(2)}$ to be $H(\xi)\circ \lambda$. Thus the morphism $\nu_{2+(2)}$ is a morphism from $ S^3\wedge\mathbb{G}_{m}^4$ to $S^2\wedge\mathbb{G}_{m}^2$.\\

In the following we would like to show that $\eta_{3+(2)}\circ \nu_{3+(3)}$ is $\mathbb{A}^1$-nullhomotopic. Using the weak equivalence between $\mathrm{SL}_{2}$ and $S^1\wedge\mathbb{G}_{m}^2$ given in the proof of Proposition~\ref{Propostion 2.6} and Proposition 2.6 it is enough to show that the composite 
\[\begin{tikzcd}
	{S^2\wedge\mathrm{SL}_{2}\wedge \mathrm{SL}_{2}\wedge\mathbb{G}_{m}} && {S^2\wedge\mathrm{SL}_{2}\wedge \mathbb{G}_{m}} && {S^2\wedge\mathrm{SL}_{2}}
	\arrow["{\mathrm{id}_{S^1}\wedge H(\xi) \wedge \mathrm{id}_{\mathbb{G}_{m}}}", from=1-1, to=1-3]
	\arrow["{\mathrm{id}_{S^2}\wedge H(\alpha)}", from=1-3, to=1-5]
\end{tikzcd}\]
is nullhomotopic. The proof is not difficult, but technical; therefore we divide it into several propositions. Let $\tilde{\xi}: \mathrm{SL}_{2}\ast \mathrm{SL}_{2}\rightarrow \Sigma\mathrm{SL}_{2}$ denote the morphism induced by $\xi$ and $\tilde{\alpha}: \mathrm{SL}_{2}\ast \mathbb{G}_{m}\rightarrow \Sigma\mathrm{SL}_{2}$ the morphism induced by $\alpha$. The above composite can be written as the composite \[\begin{tikzcd}
	{S^2\wedge\mathrm{SL}_{2}\wedge \mathrm{SL}_{2}\wedge\mathbb{G}_{m}} & {S^1\wedge(\mathrm{SL}_{2}\ast \mathrm{SL}_{2})\wedge\mathbb{G}_{m}} && {S^2\wedge\mathrm{SL}_{2}\wedge \mathbb{G}_{m}} \\
	&&& {S^1\wedge(\mathrm{SL}_{2}\ast \mathbb{G}_{m})} \\
	&&& {S^2\wedge\mathrm{SL}_{2}}
	\arrow["\sim", from=1-1, to=1-2]
	\arrow["{\mathrm{id}_{S^1}\wedge \tilde\xi \wedge \mathrm{id}_{\mathbb{G}_{m}}}", from=1-2, to=1-4]
	\arrow["\sim", from=1-4, to=2-4]
	\arrow["{\mathrm{id}_{S^1}\wedge \tilde\alpha }", from=2-4, to=3-4]
\end{tikzcd}\]
Thus we only need to show that \[\begin{tikzcd}
	{S^1\wedge(\mathrm{SL}_{2}\ast \mathrm{SL}_{2})\wedge\mathbb{G}_{m}} && {S^2\wedge\mathrm{SL}_{2}\wedge \mathbb{G}_{m}} \\
	&& {S^1\wedge(\mathrm{SL}_{2}\ast \mathbb{G}_{m})} \\
	&& {S^2\wedge\mathrm{SL}_{2}}
	\arrow["{\mathrm{id}_{S^1}\wedge \tilde\xi \wedge \mathrm{id}_{\mathbb{G}_{m}}}", from=1-1, to=1-3]
	\arrow["\sim", from=1-3, to=2-3]
	\arrow["{\mathrm{id}_{S^1}\wedge \tilde\alpha }", from=2-3, to=3-3]
\end{tikzcd}\] is nullhomotopic.\\

\begin{prop} \label{Proposition 3.1}
	The diagram \[\begin{tikzcd}
		{S^1\wedge(\mathrm{SL}_{2}\ast \mathrm{SL}_{2})\wedge\mathbb{G}_{m}} & {(\mathrm{SL}_{2}\ast \mathrm{SL}_{2})\ast\mathbb{G}_{m}} & {S^1\wedge((\mathrm{SL}_{2}\ast \mathrm{SL}_{2})\times\mathbb{G}_{m}\times\mathbb{G}_{m} )} \\
		&& {S^1\wedge(\mathrm{SL}_{2}\ast \mathrm{SL}_{2})} \\
		{S^2\wedge\mathrm{SL}_{2}\wedge \mathbb{G}_{m}} \\
		{S^1\wedge(\mathrm{SL}_{2}\ast \mathbb{G}_{m})} & {S^2\wedge\mathrm{SL}_{2}}
		\arrow["{\mathrm{id}_{S^1}\wedge \tilde\xi \wedge \mathrm{id}_{\mathbb{G}_{m}}}"', from=1-1, to=3-1]
		\arrow["\sim"', from=1-2, to=1-1]
		\arrow["f", from=1-2, to=1-3]
		\arrow["g", from=1-3, to=2-3]
		\arrow["{\mathrm{id}_{S^1}\wedge \tilde\xi}", from=2-3, to=4-2]
		\arrow["\sim", from=4-1, to=3-1]
		\arrow["{\mathrm{id}_{S^1}\wedge \tilde\alpha }"', from=4-1, to=4-2]
	\end{tikzcd}\] commutes in $\mathcal{H}_{\bullet}(\mathbb{Z})$. The morphism $f$ is the composite \[\begin{tikzcd}
		{(\mathrm{SL}_{2}\ast \mathrm{SL}_{2})\ast\mathbb{G}_{m}} & {S^1\wedge((\mathrm{SL}_{2}\ast \mathrm{SL}_{2})\times\mathbb{G}_{m})} && {S^1\wedge((\mathrm{SL}_{2}\ast \mathrm{SL}_{2})\times\mathbb{G}_{m}\times\mathbb{G}_{m})}
		\arrow[from=1-1, to=1-2]
		\arrow["{\mathrm{id}_{S^1}\wedge(\mathrm{id}_{\mathrm{SL}_{2}\ast \mathrm{SL}_{2}}\times\tilde\Delta)}", from=1-2, to=1-4]
	\end{tikzcd}\] where $\tilde\Delta$ is the diagonal morphism. There is a pointed morphism $(\mathrm{SL}_{2}\ast \mathrm{SL}_{2})\times\mathbb{G}_{m}\times\mathbb{G}_{m} \rightarrow \mathrm{SL}_{2}\ast \mathrm{SL}_{2}$ given by $(\overline{(s,A,B)}, x, y)\mapsto \overline{(s,\alpha(A,x),\alpha(B,y))}$. The morphism $g$ in the diagram is induced by it.\\
\end{prop}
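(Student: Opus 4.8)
The plan is to reduce the commutativity to one geometric fact — that conjugation by a diagonal matrix is a group automorphism of $\mathrm{SL}_2$ — together with purely formal properties of joins and Hopf constructions. First I would note that $\alpha$ is ``conjugation by $\mathrm{diag}(1,t^{-1})$'', so that the identity $\alpha(\xi(A,B),t)=\xi(\alpha(A,t),\alpha(B,t))$ holds on the nose. Writing $\gamma_{\Delta}=\gamma\circ(\mathrm{id}_{\mathrm{SL}_2\ast\mathrm{SL}_2}\times\tilde\Delta)\colon(\mathrm{SL}_2\ast\mathrm{SL}_2)\times\mathbb{G}_m\to\mathrm{SL}_2\ast\mathrm{SL}_2$ (so $(\overline{(s,A,B)},t)\mapsto\overline{(s,\alpha(A,t),\alpha(B,t))}$) and $\alpha^{\Sigma}\colon(\Sigma\mathrm{SL}_2)\times\mathbb{G}_m\to\Sigma\mathrm{SL}_2$, $(s\wedge M,t)\mapsto s\wedge\alpha(M,t)$, this identity is precisely the \emph{strict} commutativity of the square of pointed motivic spaces
\[\begin{tikzcd}
	{(\mathrm{SL}_2\ast\mathrm{SL}_2)\times\mathbb{G}_m} & {\mathrm{SL}_2\ast\mathrm{SL}_2} \\
	{(\Sigma\mathrm{SL}_2)\times\mathbb{G}_m} & {\Sigma\mathrm{SL}_2}
	\arrow["{\gamma_{\Delta}}", from=1-1, to=1-2]
	\arrow["{\tilde{\xi}\times\mathrm{id}_{\mathbb{G}_m}}"', from=1-1, to=2-1]
	\arrow["{\tilde{\xi}}", from=1-2, to=2-2]
	\arrow["{\alpha^{\Sigma}}"', from=2-1, to=2-2]
\end{tikzcd}\]

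Next I would unwind all the maps in the diagram. Let $p$ be the first canonical projection $(\mathrm{SL}_2\ast\mathrm{SL}_2)\ast\mathbb{G}_m\to\Sigma((\mathrm{SL}_2\ast\mathrm{SL}_2)\times\mathbb{G}_m)$, let $q$ be the smash projection of $(\mathrm{SL}_2\ast\mathrm{SL}_2)\times\mathbb{G}_m$ and $q'$ that of $(\Sigma\mathrm{SL}_2)\times\mathbb{G}_m$, so that the weak equivalence labelled $\sim$ from $(\mathrm{SL}_2\ast\mathrm{SL}_2)\ast\mathbb{G}_m$ to $S^1\wedge(\mathrm{SL}_2\ast\mathrm{SL}_2)\wedge\mathbb{G}_m$ is $(\mathrm{id}_{S^1}\wedge q)\circ p$. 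From the definitions of $f$ and $g$ one reads off $g\circ f=(\mathrm{id}_{S^1}\wedge\gamma_{\Delta})\circ p$, so the composite along the edges $C,D,G$ of the diagram is $\Sigma(\tilde{\xi}\circ\gamma_{\Delta})\circ p$. For the composite along $A,E,F,G$, naturality of the smash projection gives $(\tilde{\xi}\wedge\mathrm{id}_{\mathbb{G}_m})\circ q=q'\circ(\tilde{\xi}\times\mathrm{id}_{\mathbb{G}_m})$, while the defining equation $H(\alpha)=\tilde{\alpha}\circ(\mathrm{collapse})^{-1}$ of the Hopf construction — rather, the equality $(\mathrm{id}_{S^1}\wedge\tilde{\alpha})\circ(\text{the weak equivalence }S^1\wedge(\mathrm{SL}_2\ast\mathbb{G}_m)\xrightarrow{\sim}S^2\wedge\mathrm{SL}_2\wedge\mathbb{G}_m)^{-1}=\mathrm{id}_{S^1}\wedge H(\alpha)$ it implies — absorbs the awkward inverse; together these identify the left-hand composite with $\Sigma\bigl(H(\alpha)\circ q'\circ(\tilde{\xi}\times\mathrm{id}_{\mathbb{G}_m})\bigr)\circ p$. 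So everything comes down to proving the single equality $\Sigma(\tilde{\xi}\circ\gamma_{\Delta})\circ p=\Sigma\bigl(H(\alpha)\circ q'\circ(\tilde{\xi}\times\mathrm{id}_{\mathbb{G}_m})\bigr)\circ p$ of two morphisms $(\mathrm{SL}_2\ast\mathrm{SL}_2)\ast\mathbb{G}_m\to S^2\wedge\mathrm{SL}_2$.

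Both are of the form $\Sigma\phi\circ p$ for a morphism $\phi\colon(\mathrm{SL}_2\ast\mathrm{SL}_2)\times\mathbb{G}_m\to\Sigma\mathrm{SL}_2$ of $\mathcal{H}_{\bullet}(\mathbb{Z})$, and $\Sigma\phi\circ p=H(\phi)\circ c_1$, where $c_1$ is the canonical weak equivalence $(\mathrm{SL}_2\ast\mathrm{SL}_2)\ast\mathbb{G}_m\xrightarrow{\sim}\Sigma((\mathrm{SL}_2\ast\mathrm{SL}_2)\wedge\mathbb{G}_m)$ and $H(\phi):=\Sigma\phi\circ p\circ c_1^{-1}$ is the Hopf construction (extended to morphisms in $\mathcal{H}_{\bullet}(\mathbb{Z})$ by that same formula); hence it is enough to prove $H(\tilde{\xi}\circ\gamma_{\Delta})=H\bigl(H(\alpha)\circ q'\circ(\tilde{\xi}\times\mathrm{id}_{\mathbb{G}_m})\bigr)$. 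By the strictly commuting square above, $\tilde{\xi}\circ\gamma_{\Delta}=\alpha^{\Sigma}\circ(\tilde{\xi}\times\mathrm{id}_{\mathbb{G}_m})$; so by naturality of the Hopf construction in its source variable both sides are obtained from $H(\alpha^{\Sigma})$, resp. $H\bigl(H(\alpha)\circ q'\bigr)$, by precomposition with $\Sigma(\tilde{\xi}\wedge\mathrm{id}_{\mathbb{G}_m})$. Finally $H\bigl(H(\alpha)\circ q'\bigr)=\Sigma H(\alpha)$, since the Hopf construction of a map factoring through a smash projection is its suspension, and $H(\alpha^{\Sigma})=\Sigma H(\alpha)$, since the Hopf construction commutes with suspending the first variable; this yields $H(\tilde{\xi}\circ\gamma_{\Delta})=H\bigl(H(\alpha)\circ q'\circ(\tilde{\xi}\times\mathrm{id}_{\mathbb{G}_m})\bigr)$ and the proposition follows.

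The main obstacle is not any single hard step but the bookkeeping, and above all the careful treatment of the formal inverses of the collapse maps. One must never write a point-set section of $\mathrm{SL}_2\ast\mathbb{G}_m\to\Sigma(\mathrm{SL}_2\wedge\mathbb{G}_m)$ — the obvious formula $r\wedge M\wedge u\mapsto\overline{(r,M,u)}$ is not well defined, as it fails the join relations over the collapsed cone — but always keep that inverse packaged inside the identity $(\mathrm{id}_{S^1}\wedge\tilde{\alpha})\circ(\mathrm{collapse})^{-1}=\mathrm{id}_{S^1}\wedge H(\alpha)$; and one must carry out all the reductions above before forming any difference of maps, so that the hom-sets in play — those with a suspension as source — are actually groups. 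The three formal facts about the Hopf construction invoked at the end (naturality in the source; that the Hopf construction of a map through a smash projection is a suspension; and that it commutes with suspending the first variable) are elementary, and if one prefers each of them can be verified by the geometric realization technique used in the proof of Proposition~\ref{Propostion 2.6}, where it becomes an explicit homotopy of pointed continuous maps.
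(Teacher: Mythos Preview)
Your argument is correct and structurally parallel to the paper's: both reduce the commutativity to the group-automorphism identity $\alpha(\xi(A,B),t)=\xi(\alpha(A,t),\alpha(B,t))$ together with a compatibility between the Hopf construction and suspension in the first variable. The paper makes this compatibility explicit by introducing the intermediate object $(S^1\wedge\mathrm{SL}_2)\ast\mathbb{G}_m$ and the map $e$ induced by your $\alpha^{\Sigma}$, then proving the resulting square commutes via the cofiber $(S^1\wedge\mathrm{SL}_2)\ast\mathbb{G}_m/C'\mathbb{G}_m$ and a cancellation of two $(-1)$'s coming from its particular choice of comparison maps $\textcircled{1},\textcircled{2}$; you package exactly the same statement as the abstract claim $H(\alpha^{\Sigma})=\Sigma H(\alpha)$ and defer its verification to the geometric-realization technique. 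Your presentation is cleaner and isolates three reusable formal lemmas about Hopf constructions (naturality in the source, behaviour on maps through a smash projection, and compatibility with suspension of the first variable); the paper's is more hands-on but introduces auxiliary signs that then have to be tracked and cancelled.
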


\begin{proof}
	From the original diagram we get \[\begin{tikzcd}
		{S^1\wedge(\mathrm{SL}_{2}\ast \mathrm{SL}_{2})\wedge\mathbb{G}_{m}} & {(\mathrm{SL}_{2}\ast \mathrm{SL}_{2})\ast\mathbb{G}_{m}} & {S^1\wedge((\mathrm{SL}_{2}\ast \mathrm{SL}_{2})\times\mathbb{G}_{m}\times\mathbb{G}_{m} )} \\
		& {(S^1\wedge \mathrm{SL}_{2})\ast\mathbb{G}_{m}} \\
		{S^2\wedge\mathrm{SL}_{2}\wedge \mathbb{G}_{m}} \\
		{S^1\wedge(\mathrm{SL}_{2}\ast \mathbb{G}_{m})} & {S^2\wedge\mathrm{SL}_{2}} & {S^1\wedge(\mathrm{SL}_{2}\ast \mathrm{SL}_{2})}
		\arrow["{{\mathrm{id}_{S^1}\wedge \tilde\xi \wedge \mathrm{id}_{\mathbb{G}_{m}}}}"', from=1-1, to=3-1]
		\arrow["\sim"', from=1-2, to=1-1]
		\arrow["f", from=1-2, to=1-3]
		\arrow["{{ \tilde\xi\ast\mathrm{id}_{\mathbb{G}_{m}}}}", from=1-2, to=2-2]
		\arrow["g", from=1-3, to=4-3]
		\arrow["\sim", from=2-2, to=3-1]
		\arrow["e", from=2-2, to=4-2]
		\arrow["\sim", from=4-1, to=3-1]
		\arrow["{{\mathrm{id}_{S^1}\wedge \tilde\alpha }}"', from=4-1, to=4-2]
		\arrow["{{\mathrm{id}_{S^1}\wedge \tilde\xi}}", from=4-3, to=4-2]
	\end{tikzcd}\]
	The morphism $e$ is induced by the pointed morphism $(S^1\wedge \mathrm{SL}_{2})\times \mathbb{G}_{m}\rightarrow S^1\wedge \mathrm{SL}_{2}; (s\wedge A, x)\mapsto s\wedge \alpha(A,x)$. By direct inspection the diagrams \[\begin{tikzcd}
		{S^1\wedge(\mathrm{SL}_{2}\ast \mathrm{SL}_{2})\wedge\mathbb{G}_{m}} & {(\mathrm{SL}_{2}\ast \mathrm{SL}_{2})\ast\mathbb{G}_{m}} \\
		\\
		{S^2\wedge\mathrm{SL}_{2}\wedge \mathbb{G}_{m}} & {(S^1\wedge \mathrm{SL}_{2})\ast\mathbb{G}_{m}}
		\arrow["{\mathrm{id}_{S^1}\wedge \tilde\xi \wedge \mathrm{id}_{\mathbb{G}_{m}}}"', from=1-1, to=3-1]
		\arrow["\sim"', from=1-2, to=1-1]
		\arrow["{\tilde\xi\ast\mathrm{id}_{\mathbb{G}_{m}}}", from=1-2, to=3-2]
		\arrow["\sim"', from=3-2, to=3-1]
	\end{tikzcd}\] and \[\begin{tikzcd}
		{(\mathrm{SL}_{2}\ast \mathrm{SL}_{2})\ast\mathbb{G}_{m}} & {S^1\wedge((\mathrm{SL}_{2}\ast \mathrm{SL}_{2})\times\mathbb{G}_{m}\times\mathbb{G}_{m} )} \\
		{(S^1\wedge \mathrm{SL}_{2})\ast\mathbb{G}_{m}} \\
		\\
		{S^2\wedge\mathrm{SL}_{2}} & {S^1\wedge(\mathrm{SL}_{2}\ast \mathrm{SL}_{2})}
		\arrow["f", from=1-1, to=1-2]
		\arrow["{{ \tilde\xi\ast\mathrm{id}_{\mathbb{G}_{m}}}}", from=1-1, to=2-1]
		\arrow["g", from=1-2, to=4-2]
		\arrow["e", from=2-1, to=4-1]
		\arrow["{{\mathrm{id}_{S^1}\wedge \tilde\xi}}", from=4-2, to=4-1]
	\end{tikzcd}\] commute.\\
	
	Thus we need to show that \[\begin{tikzcd}
		{S^2\wedge\mathrm{SL}_{2}\wedge \mathbb{G}_{m}} & {(S^1\wedge \mathrm{SL}_{2})\ast\mathbb{G}_{m}} \\
		\\
		{S^1\wedge(\mathrm{SL}_{2}\ast \mathbb{G}_{m})} & {S^2\wedge\mathrm{SL}_{2}}
		\arrow["\sim"', from=1-2, to=1-1]
		\arrow["e", from=1-2, to=3-2]
		\arrow["\sim", from=3-1, to=1-1]
		\arrow["{{{\mathrm{id}_{S^1}\wedge \tilde\alpha }}}"', from=3-1, to=3-2]
	\end{tikzcd}\] commutes. We first consider the join $(S^1\wedge \mathrm{SL}_{2})\ast\mathbb{G}_{m}$. Let $C'\mathbb{G}_{m}$ be the cone $\Delta^1\wedge \mathbb{G}_{m}$, where $\Delta^1$ is based at 0. Then there is a canonical inclusion $C'\mathbb{G}_{m}\rightarrow (S^1\wedge \mathrm{SL}_{2})\ast\mathbb{G}_{m}$ given by $s\wedge x\mapsto \overline{(s,\ast, x)} $, where $\ast$ denotes the base point of $S^1\wedge \mathrm{SL}_{2}$. In particular, we can take the cofiber $(S^1\wedge \mathrm{SL}_{2})\ast\mathbb{G}_{m}/ C'\mathbb{G}_{m}$. The canonical projection $(S^1\wedge \mathrm{SL}_{2})\ast\mathbb{G}_{m}\rightarrow  (S^1\wedge \mathrm{SL}_{2})\ast\mathbb{G}_{m}/ C'\mathbb{G}_{m}$ is a weak equivalence. Similarly, the canonical projection $(S^1\wedge \mathrm{SL}_{2})\ast\mathbb{G}_{m}/ C'\mathbb{G}_{m}\rightarrow S^2\wedge\mathrm{SL}_{2}\wedge \mathbb{G}_{m} $ is a weak equivalence, too.\\
	
	Moreover, we can define the morphism $
		\textcircled{1}: (S^1\wedge \mathrm{SL}_{2})\ast\mathbb{G}_{m}/ C'\mathbb{G}_{m}\rightarrow S^2\wedge\mathrm{SL}_{2}
$; it is induced by \begin{align*}
(S^1\wedge \mathrm{SL}_{2})\ast\mathbb{G}_{m}\rightarrow S^2\wedge\mathrm{SL}_{2}; \  \overline{(s, r\wedge A, x)}\mapsto s\wedge r\wedge \alpha(A,x) \ \ \cdot
\end{align*}
There is also a morphism $	\textcircled{2}:S^1\wedge(\mathrm{SL}_{2}\ast \mathbb{G}_{m}) \rightarrow (S^1\wedge \mathrm{SL}_{2})\ast\mathbb{G}_{m}/ C'\mathbb{G}_{m} $ given by $s\wedge \overline{(r, A,x)}\mapsto \overline{(\overline{r,s\wedge A,x})}$. Using these morphisms we obtain the following diagram \[\begin{tikzcd}
	&& {(S^1\wedge \mathrm{SL}_{2})\ast\mathbb{G}_{m}} \\
	\\
	{S^2\wedge\mathrm{SL}_{2}\wedge \mathbb{G}_{m}} && {(S^1\wedge \mathrm{SL}_{2})\ast\mathbb{G}_{m}/ C'\mathbb{G}_{m}} & {S^2\wedge\mathrm{SL}_{2}} \\
	\\
	&& {S^1\wedge(\mathrm{SL}_{2}\ast \mathbb{G}_{m})}
	\arrow["\sim"', from=1-3, to=3-1]
	\arrow["\sim"', from=1-3, to=3-3]
	\arrow["e", from=1-3, to=3-4]
	\arrow["\sim"', from=3-3, to=3-1]
	\arrow["{\textcircled{1}}", from=3-3, to=3-4]
	\arrow["\sim", from=5-3, to=3-1]
	\arrow["{\textcircled{2}}"{description}, from=5-3, to=3-3]
	\arrow["{\mathrm{id}_{S^1}\wedge \tilde\alpha}"', from=5-3, to=3-4]
\end{tikzcd}\]
It follows from the definition of $e$ and $\textcircled{1}$ that the diagram \[\begin{tikzcd}
	{(S^1\wedge \mathrm{SL}_{2})\ast\mathbb{G}_{m}} \\
	\\
	{(S^1\wedge \mathrm{SL}_{2})\ast\mathbb{G}_{m}/ C'\mathbb{G}_{m}} & {S^2\wedge\mathrm{SL}_{2}}
	\arrow["\sim"', from=1-1, to=3-1]
	\arrow["e", from=1-1, to=3-2]
	\arrow["{\textcircled{1}}", from=3-1, to=3-2]
\end{tikzcd}\] commutes. By inspection the diagram \[\begin{tikzcd}
&& {(S^1\wedge \mathrm{SL}_{2})\ast\mathbb{G}_{m}} \\
\\
{S^2\wedge\mathrm{SL}_{2}\wedge \mathbb{G}_{m}} && {(S^1\wedge \mathrm{SL}_{2})\ast\mathbb{G}_{m}/ C'\mathbb{G}_{m}}
\arrow["\sim"', from=1-3, to=3-1]
\arrow["\sim"', from=1-3, to=3-3]
\arrow["\sim"', from=3-3, to=3-1]
\end{tikzcd}\] commutes, too.\\

Now the diagram \[\begin{tikzcd}
	{(S^1\wedge \mathrm{SL}_{2})\ast\mathbb{G}_{m}/ C'\mathbb{G}_{m}} & {S^2\wedge\mathrm{SL}_{2}} \\
	\\
	{S^1\wedge(\mathrm{SL}_{2}\ast \mathbb{G}_{m})}
	\arrow["{{\textcircled{1}}}", from=1-1, to=1-2]
	\arrow["{{\textcircled{2}}}"{description}, from=3-1, to=1-1]
	\arrow["{{\mathrm{id}_{S^1}\wedge \tilde\alpha}}"', from=3-1, to=1-2]
\end{tikzcd}\] commutes by sign, as the morphism $S^2\rightarrow S^2; s\wedge r\mapsto r\wedge s$ is $-\mathrm{id}_{S^2}$. It follows from the same reason that the diagram \[\begin{tikzcd}
{S^2\wedge\mathrm{SL}_{2}\wedge \mathbb{G}_{m}} && {(S^1\wedge \mathrm{SL}_{2})\ast\mathbb{G}_{m}/ C'\mathbb{G}_{m}} \\
\\
&& {S^1\wedge(\mathrm{SL}_{2}\ast \mathbb{G}_{m})}
\arrow["\sim"', from=1-3, to=1-1]
\arrow["\sim", from=3-3, to=1-1]
\arrow["{{\textcircled{2}}}"{description}, from=3-3, to=1-3]
\end{tikzcd}\] commutes also by sign. Since these two diagrams commute by sign, it follows that the whole diagram 
\[\begin{tikzcd}
	{S^2\wedge\mathrm{SL}_{2}\wedge \mathbb{G}_{m}} & {(S^1\wedge \mathrm{SL}_{2})\ast\mathbb{G}_{m}} \\
	\\
	{S^1\wedge(\mathrm{SL}_{2}\ast \mathbb{G}_{m})} & {S^2\wedge\mathrm{SL}_{2}}
	\arrow["\sim"', from=1-2, to=1-1]
	\arrow["e", from=1-2, to=3-2]
	\arrow["\sim", from=3-1, to=1-1]
	\arrow["{{{\mathrm{id}_{S^1}\wedge \tilde\alpha }}}"', from=3-1, to=3-2]
\end{tikzcd}\] is commutative.\\
	\end{proof}

By Proposition~\ref{Proposition 3.1} it suffices to show that the composite
\[\begin{tikzcd}
	{(\mathrm{SL}_{2}\ast \mathrm{SL}_{2})\ast\mathbb{G}_{m}} & {S^1\wedge((\mathrm{SL}_{2}\ast \mathrm{SL}_{2})\times\mathbb{G}_{m}\times\mathbb{G}_{m} )} & {S^1\wedge(\mathrm{SL}_{2}\ast \mathrm{SL}_{2})}
	\arrow["f", from=1-1, to=1-2]
	\arrow["g", from=1-2, to=1-3]
\end{tikzcd}\] is $\mathbb{A}^1$-nullhomotopic. We observe that this composite fits into another commutative diagram \[\begin{tikzcd}
{(\mathrm{SL}_{2}\ast \mathrm{SL}_{2})\ast\mathbb{G}_{m}} & {S^1\wedge((\mathrm{SL}_{2}\ast \mathrm{SL}_{2})\times\mathbb{G}_{m}\times\mathbb{G}_{m} )} & {S^1\wedge(\mathrm{SL}_{2}\ast \mathrm{SL}_{2})} \\
{(S^1\wedge\mathrm{SL}_{2}\wedge \mathrm{SL}_{2})\ast\mathbb{G}_{m}} & {(S^1\wedge\mathrm{SL}_{2}\wedge \mathrm{SL}_{2})\ast(\mathbb{G}_{m}\times\mathbb{G}_{m})} & {S^2\wedge\mathrm{SL}_{2}\wedge\mathrm{SL}_{2}}
\arrow["f", from=1-1, to=1-2]
\arrow["\sim"', from=1-1, to=2-1]
\arrow["g", from=1-2, to=1-3]
\arrow["\sim", from=1-3, to=2-3]
\arrow["{\mathrm{id}\ast\tilde\Delta}", from=2-1, to=2-2]
\arrow["{\hat{g}}", from=2-2, to=2-3]
\end{tikzcd}\] where the morphism $\hat{g}$ is induced by $(S^1\wedge\mathrm{SL}_{2}\wedge \mathrm{SL}_{2})\times(\mathbb{G}_{m}\times\mathbb{G}_{m})\rightarrow S^1\wedge\mathrm{SL}_{2}\wedge \mathrm{SL}_{2};\  (t\wedge A\wedge B,(x,y))\mapsto t\wedge \alpha(A,x)\wedge \alpha(B,y)$.\\

\begin{prop} \label{Proposition 3.2}
	The composite $\hat{g}\circ (\mathrm{id}\ast\tilde{\Delta})$ is $\mathbb{A}^1$-nullhomotopic.\\
\end{prop}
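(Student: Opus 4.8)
First I would recognise the composite as a single Hopf construction. By the functoriality of the Hopf construction under precomposition in the second variable, $\hat{g}\circ(\mathrm{id}\ast\tilde{\Delta})$ equals $H(\phi)$, where $\phi\colon(S^1\wedge\mathrm{SL}_2\wedge\mathrm{SL}_2)\times\mathbb{G}_m\to S^1\wedge\mathrm{SL}_2\wedge\mathrm{SL}_2$ is the pointed morphism $(t\wedge A\wedge B,\,x)\mapsto t\wedge\alpha(A,x)\wedge\alpha(B,x)$; that is, $\phi$ conjugates \emph{both} $\mathrm{SL}_2$-factors simultaneously by $\mathrm{diag}(x,1)$. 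Since the Hopf construction depends only on the pointed $\mathbb{A}^1$-homotopy class of the underlying morphism, it suffices to show that $H(\phi)$ is $\mathbb{A}^1$-nullhomotopic.

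Next I would break $H(\phi)$ into three summands. Recall from the discussion preceding Lemma~\ref{Lemma 2.7} the $\mathcal{H}_\bullet(\mathbb{Z})$-isomorphism $j$ of $\Sigma(\mathbb{G}_m\times\mathbb{G}_m)$ with $\Sigma\mathbb{G}_m\vee\Sigma\mathbb{G}_m\vee\Sigma(\mathbb{G}_m\wedge\mathbb{G}_m)$, together with the identity $j\circ\Sigma\tilde{\Delta}=i_1+i_2+i\circ\Delta_{1+(2)}$ established there. Smashing $j$ on the left with $S^1\wedge\mathrm{SL}_2\wedge\mathrm{SL}_2$ and using $(S^1\wedge\mathrm{SL}_2\wedge\mathrm{SL}_2)\ast(\mathbb{G}_m\times\mathbb{G}_m)\cong(S^1\wedge\mathrm{SL}_2\wedge\mathrm{SL}_2)\wedge\Sigma(\mathbb{G}_m\times\mathbb{G}_m)$ produces a wedge decomposition of $(S^1\wedge\mathrm{SL}_2\wedge\mathrm{SL}_2)\ast(\mathbb{G}_m\times\mathbb{G}_m)$ into two copies of $(S^1\wedge\mathrm{SL}_2\wedge\mathrm{SL}_2)\ast\mathbb{G}_m$ and one copy of $(S^1\wedge\mathrm{SL}_2\wedge\mathrm{SL}_2)\ast(\mathbb{G}_m\wedge\mathbb{G}_m)$, under which $\hat{g}$ becomes a triple $(\hat{g}_1,\hat{g}_2,\hat{g}_{12})$. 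Since $(S^1\wedge\mathrm{SL}_2\wedge\mathrm{SL}_2)\ast\mathbb{G}_m$ is a co-$H$-space and $\mathrm{id}\ast\tilde{\Delta}$ corresponds under the decomposition to $\mathrm{id}+\mathrm{id}+(\mathrm{id}\ast\Delta_{(2)})$, this gives in $\mathcal{H}_\bullet(\mathbb{Z})$ the equation $\hat{g}\circ(\mathrm{id}\ast\tilde{\Delta})=\hat{g}_1+\hat{g}_2+\hat{g}_{12}\circ(\mathrm{id}\ast\Delta_{(2)})$. Unravelling the definition of $\hat{g}$, the maps $\hat{g}_1$ and $\hat{g}_2$ are the Hopf constructions of the morphisms conjugating only the first, resp. only the second, $\mathrm{SL}_2$-factor, while $\hat{g}_{12}$ is the Hopf construction of the ``bilinear'' conjugation $(t\wedge A\wedge B,\,x\wedge y)\mapsto t\wedge\alpha(A,x)\wedge\alpha(B,y)$.

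The third step identifies the three summands using the results of Section~2. By Proposition~\ref{Propostion 2.6} the Hopf construction of $\alpha$ is $\eta_{2+(2)}$; hence $\hat{g}_1$ is obtained from $\eta_{2+(2)}$ by smashing with the untouched $S^1$ on the left and the untouched $\mathrm{SL}_2$ on the right and then sliding the new suspension coordinate into place, the last step contributing a sign coming from the interchange $S^2\cong S^2$ exactly as in the proof of Proposition~\ref{Proposition 3.1}; thus $\hat{g}_1$ is a sign-twisted suspension of $\eta$. The summand $\hat{g}_2$ is the analogous suspension of $\eta$ acting on the other $\mathrm{SL}_2$-factor, and comparing $\hat{g}_1$ with $\hat{g}_2$ amounts to interchanging the two copies of $\mathrm{SL}_2\cong S^1\wedge\mathbb{G}_m^2$ in source and target; by Corollaries~\ref{Corollary 2.2}, \ref{Corollary 2.3} and \ref{Corollary 2.4} together with Morel's relation $h\eta=\eta h=0$ (equivalently $\epsilon_{(1)}\eta=\eta\epsilon_{(1)}=\eta$) this yields $\hat{g}_1+\hat{g}_2=0$. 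Finally, applying Proposition~\ref{Propostion 2.6} to each factor exhibits $\hat{g}_{12}$ as essentially a smash of two copies of $\eta$, so that $\hat{g}_{12}\circ(\mathrm{id}\ast\Delta_{(2)})$ is a suspension of a composite of $\eta$-maps with $\Delta_{(2)}$; the computation of $\eta\circ\Delta$ from Lemma~\ref{Lemma 2.7} together with $h\eta=0$ then forces this composite to be $\mathbb{A}^1$-nullhomotopic. Summing the three contributions gives the claim.

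The step I expect to be the main obstacle is the sign and shuffle bookkeeping. Because $\mathrm{SL}_2\cong S^1\wedge\mathbb{G}_m^2$, interchanging the two $\mathrm{SL}_2$-factors and transporting the various join and suspension coordinates into position produces simplicial signs together with powers of $\epsilon_{(1)}$, and these have to be tracked precisely through Corollaries~\ref{Corollary 2.2}--\ref{Corollary 2.4} in order to see that $\hat{g}_1$ and $\hat{g}_2$ genuinely cancel and that the bilinear term $\hat{g}_{12}\circ(\mathrm{id}\ast\Delta_{(2)})$ dies. A secondary point needing care is to make the co-$H$-space addition of morphisms out of the join, and the compatibility of the two wedge decompositions (of $\hat{g}$ and of $\mathrm{id}\ast\tilde{\Delta}$), rigorous; here one works with the explicit join models and, where necessary, transports topological homotopies along the geometric realization functor as in the proof of Proposition~\ref{Propostion 2.6}.
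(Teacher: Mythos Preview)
Your overall architecture---use the splitting of $\Sigma(\mathbb{G}_m\times\mathbb{G}_m)$ to write $\hat g\circ(\mathrm{id}\ast\tilde\Delta)$ as a sum of three terms and identify each with an $\eta$-expression via Proposition~\ref{Propostion 2.6}---is exactly the paper's approach. The gap is in the evaluation of the three summands.

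Your claim that $\hat g_1+\hat g_2=0$ is incorrect. Interchanging the two copies of $\mathrm{SL}_2\simeq S^1\wedge\mathbb{G}_m^2$ acts on \emph{both} source and target; under the identifications $(\ast)$ and $(\ast\ast)$ this swap contributes a factor $-1$ from the transposition of the two simplicial circles and a factor $\epsilon^4=1$ from permuting the four $\mathbb{G}_m$'s, hence $-1$ on each side. The two signs cancel, so in fact $\hat g_2=\hat g_1$, not $-\hat g_1$. The paper's Proposition~\ref{Proposition 3.3} carries out precisely this bookkeeping and finds that \emph{both} summands equal $\eta_{4+(4)}$, so their sum is $2\eta_{4+(4)}$, which is not null.

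Likewise the bilinear term is not individually null: Proposition~\ref{Proposition 3.4} identifies $\hat g_{12}\circ(\mathrm{id}\ast\Delta_{(2)})$ with $\eta_{4+(4)}\circ\eta_{4+(5)}\circ\Delta_{4+(6)}$. Using Lemma~\ref{Lemma 2.7} this is $\eta_{4+(4)}\circ(-1_{4+(5)}-\epsilon_{4+(5)})$, which under $\eta\epsilon=\eta$ is $-2\eta_{4+(4)}$, again not null on its own. The point is that none of the three pieces vanishes separately; only the total
\[
2\eta_{4+(4)}+\eta_{4+(4)}\circ\eta_{4+(5)}\circ\Delta_{4+(6)}
=\eta_{4+(4)}\circ\bigl(2_{4+(5)}+\eta_{4+(5)}\circ\Delta_{4+(6)}\bigr)
=\eta_{4+(4)}\circ h_{4+(5)}
\]
is zero, by the unstable relation $\eta_{1+(2)}\circ h_{1+(3)}=0$. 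So the correct conclusion uses $\eta h=0$ applied once to the \emph{sum}, not twice to the pieces. Your anticipated ``main obstacle'' of sign bookkeeping is indeed where the argument goes wrong.
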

\begin{proof}
We first note that $(S^1\wedge\mathrm{SL}_{2}\wedge \mathrm{SL}_{2})\ast\mathbb{G}_{m}$ is canonically weakly equivalent to $S^2\wedge \mathrm{SL}_{2}\wedge \mathrm{SL}_{2}\wedge\mathbb{G}_{m} $. We denote the weak equivalence by $p$. Therefore we form the composite \[\begin{tikzcd}
	{(S^1\wedge\mathrm{SL}_{2}\wedge \mathrm{SL}_{2})\ast\mathbb{G}_{m}} & {(S^1\wedge\mathrm{SL}_{2}\wedge \mathrm{SL}_{2})\ast(\mathbb{G}_{m}\times\mathbb{G}_{m})} & {S^2\wedge\mathrm{SL}_{2}\wedge\mathrm{SL}_{2}} \\
	{S^2\wedge \mathrm{SL}_{2}\wedge \mathrm{SL}_{2}\wedge\mathbb{G}_{m}}
	\arrow["{{{\mathrm{id}\ast\tilde\Delta}}}", from=1-1, to=1-2]
	\arrow["{{{\hat{g}}}}", from=1-2, to=1-3]
	\arrow["{p^{-1}}", from=2-1, to=1-1]
\end{tikzcd}\] Moreover, we observe that $(S^1\wedge\mathrm{SL}_{2}\wedge \mathrm{SL}_{2})\ast(\mathbb{G}_{m}\times\mathbb{G}_{m})$ is canonically weakly equivalent to $S^2\wedge\mathrm{SL}_{2}\wedge \mathrm{SL}_{2}\wedge (\mathbb{G}_{m}\times \mathbb{G}_{m})$. We denote this weak equivalence by $\tilde p$. Via the stable splitting for $S^1\wedge(\mathbb{G}_{m}\times \mathbb{G}_{m}) $ (see p. 12) we get the following weak equivalence \[\begin{tikzcd}
{S^2\wedge(\mathrm{SL}_{2})^2\wedge (\mathbb{G}_{m}\times \mathbb{G}_{m})} \\
{(S^2\wedge(\mathrm{SL}_{2})^2\wedge \mathbb{G}_{m})\vee (S^2\wedge(\mathrm{SL}_{2})^2\wedge \mathbb{G}_{m})\vee(S^2\wedge(\mathrm{SL}_{2})^2 \wedge\mathbb{G}_{m}\wedge \mathbb{G}_{m} )}
\arrow["{\hat{j}}", from=1-1, to=2-1]
\end{tikzcd}\] where $(\mathrm{SL}_{2})^2$ is just $\mathrm{SL}_{2}\wedge \mathrm{SL}_{2}$. In the following we denote this wedge sum by $\mathcal{W}$.\\

In particular, we have the following commutative diagram \[\begin{tikzcd}
	{(S^1\wedge\mathrm{SL}_{2}\wedge \mathrm{SL}_{2})\ast\mathbb{G}_{m}} & {(S^1\wedge\mathrm{SL}_{2}\wedge \mathrm{SL}_{2})\ast(\mathbb{G}_{m}\times\mathbb{G}_{m})} & {S^2\wedge\mathrm{SL}_{2}\wedge\mathrm{SL}_{2}} \\
	& {S^2\wedge\mathrm{SL}_{2}\wedge \mathrm{SL}_{2}\wedge(\mathbb{G}_{m}\times\mathbb{G}_{m})} \\
	\\
	{S^2\wedge \mathrm{SL}_{2}\wedge \mathrm{SL}_{2}\wedge\mathbb{G}_{m}} & {\mathcal{W}}
	\arrow["{{{{\mathrm{id}\ast\tilde\Delta}}}}", from=1-1, to=1-2]
	\arrow["{{{{\hat{g}}}}}", from=1-2, to=1-3]
	\arrow["{\tilde{p}^{-1}}", from=2-2, to=1-2]
	\arrow["{\hat{j}}"', from=2-2, to=4-2]
	\arrow["{p^{-1}}", from=4-1, to=1-1]
	\arrow["{\mathrm{id}_{S^2\wedge \mathrm{SL}_{2}\wedge \mathrm{SL}_{2}\wedge\tilde\Delta}}"{description}, from=4-1, to=2-2]
	\arrow["{\hat{g}\circ \tilde{p}^{-1}\circ \hat{j}^{-1}}"', from=4-2, to=1-3]
\end{tikzcd}\]
Thus it suffices to show that the composite $\hat{g}\circ \tilde{p}^{-1}\circ \hat{j}^{-1}\circ\hat{j}\circ \mathrm{id}_{S^2\wedge \mathrm{SL}_{2}\wedge \mathrm{SL}_{2}}\wedge\tilde\Delta$ is $0$ in $\mathcal{H}_{\bullet}(\mathbb{Z})$.\\

In the next step we would like to write this composite as a sum of three  morphisms. For this we look at the composite \[\begin{tikzcd}
	{S^2\wedge\mathrm{SL}_{2}\wedge \mathrm{SL}_{2}\wedge(\mathbb{G}_{m}\times\mathbb{G}_{m})} & {\mathcal{W}} && {S^2\wedge\mathrm{SL}_{2}\wedge\mathrm{SL}_{2}}
	\arrow["{\hat j }"', from=1-1, to=1-2]
	\arrow["{\hat{g}\circ \tilde{p}^{-1}\circ \hat{j}^{-1}}"', from=1-2, to=1-4]
\end{tikzcd}\] Let $\pi_1$ be the projection $S^2\wedge\mathrm{SL}_{2}\wedge \mathrm{SL}_{2}\wedge(\mathbb{G}_{m}\times\mathbb{G}_{m}) \rightarrow S^2\wedge\mathrm{SL}_{2}\wedge \mathrm{SL}_{2}\wedge \mathbb{G}_{m}$ which is induced by the projection $\mathbb{G}_{m}\times\mathbb{G}_{m}\rightarrow \mathbb{G}_{m}$ onto the first factor. Let $\pi_2$ be the projection $S^2\wedge\mathrm{SL}_{2}\wedge \mathrm{SL}_{2}\wedge(\mathbb{G}_{m}\times\mathbb{G}_{m}) \rightarrow S^2\wedge\mathrm{SL}_{2}\wedge \mathrm{SL}_{2}\wedge \mathbb{G}_{m}$ which is induced by the projection $\mathbb{G}_{m}\times\mathbb{G}_{m}\rightarrow \mathbb{G}_{m}$ onto the second factor. Let $\pi_3$ be the canonical projection $S^2\wedge\mathrm{SL}_{2}\wedge \mathrm{SL}_{2}\wedge(\mathbb{G}_{m}\times\mathbb{G}_{m}) \rightarrow S^2\wedge\mathrm{SL}_{2}\wedge \mathrm{SL}_{2}\wedge \mathbb{G}_{m}\wedge \mathbb{G}_{m}$. Let $i_1: S^2\wedge\mathrm{SL}_{2}\wedge \mathrm{SL}_{2}\wedge \mathbb{G}_{m}\rightarrow \mathcal{W} $ be the inclusion into the first wedge summand and $i_2: S^2\wedge\mathrm{SL}_{2}\wedge \mathrm{SL}_{2}\wedge \mathbb{G}_{m}\rightarrow \mathcal{W} $ the inclusion into the second summand. Let $i_3: S^2\wedge\mathrm{SL}_{2}\wedge \mathrm{SL}_{2}\wedge \mathbb{G}_{m}\wedge\mathbb{G}_{m} \rightarrow \mathcal{W} $ be the inclusion into the third summand.\\

Then the composite \[\begin{tikzcd}
	{S^2\wedge\mathrm{SL}_{2}\wedge \mathrm{SL}_{2}\wedge(\mathbb{G}_{m}\times\mathbb{G}_{m})} & {\mathcal{W}} && {S^2\wedge\mathrm{SL}_{2}\wedge\mathrm{SL}_{2}}
	\arrow["{\hat j }"', from=1-1, to=1-2]
	\arrow["{\hat{g}\circ \tilde{p}^{-1}\circ \hat{j}^{-1}}"', from=1-2, to=1-4]
\end{tikzcd}\] can be written as $\hat{g}\circ \tilde{p}^{-1}\circ \hat{j}^{-1}\circ i_1\circ \pi_1+ \hat{g}\circ \tilde{p}^{-1}\circ \hat{j}^{-1}\circ i_2\circ \pi_2+ \hat{g}\circ \tilde{p}^{-1}\circ \hat{j}^{-1}\circ i_3\circ \pi_3$. Hence we would like to analyze the following three morphisms in the next two propositions, namely \begin{align*}
	\hat{g}\circ \tilde{p}^{-1}\circ \hat{j}^{-1}\circ i_1\circ \pi_1\circ \mathrm{id}_{S^2\wedge \mathrm{SL}_{2}\wedge \mathrm{SL}_{2}}\wedge\tilde\Delta, \\
	\hat{g}\circ \tilde{p}^{-1}\circ \hat{j}^{-1}\circ i_2\circ \pi_2 \circ\mathrm{id}_{S^2\wedge \mathrm{SL}_{2}\wedge \mathrm{SL}_{2}}\wedge\tilde\Delta, \\
	\hat{g}\circ \tilde{p}^{-1}\circ \hat{j}^{-1}\circ i_3\circ \pi_3\circ \mathrm{id}_{S^2\wedge \mathrm{SL}_{2}\wedge \mathrm{SL}_{2}}\wedge\tilde\Delta
\end{align*}
In the proof of Proposition~\ref{Propostion 2.6} we give an explicit weak equivalence between $\mathrm{SL}_2$ and $S^1\wedge \mathbb{G}_{m}^2$. Via this weak equivalence $S^2\wedge\mathrm{SL}_{2}\wedge \mathrm{SL}_{2}\wedge \mathbb{G}_{m}$ can be identified with $S^2\wedge S^1\wedge \mathbb{G}_{m}^2\wedge S^1\wedge \mathbb{G}_{m}^2\wedge \mathbb{G}_{m}$. Throughout the paper we identify $S^4\wedge \mathbb{G}_{m}^5 $ with $S^2\wedge S^1\wedge \mathbb{G}_{m}^2\wedge S^1\wedge \mathbb{G}_{m}^2\wedge \mathbb{G}_{m}$ via the isomorphism $(\ast)$ \begin{align*}
	t_1\wedge t_2\wedge t_3\wedge t_4\wedge x_1\wedge x_2\wedge x_3\wedge x_4\wedge x_5\mapsto 	t_1\wedge t_2\wedge t_3\wedge  x_1\wedge x_2 \wedge t_4\wedge x_3\wedge x_4\wedge x_5
\end{align*}
Similarly, the motivic space $S^2\wedge\mathrm{SL}_{2}\wedge \mathrm{SL}_{2}$ can be identified with $S^2\wedge S^1\wedge \mathbb{G}_{m}^2\wedge S^1\wedge \mathbb{G}_{m}^2$. We identify $S^4\wedge \mathbb{G}_{m}^4$ with $S^2\wedge S^1\wedge \mathbb{G}_{m}^2\wedge S^1\wedge \mathbb{G}_{m}^2$ via the isomorphism $(\ast \ast)$ \begin{align*}
	t_1\wedge t_2\wedge t_3\wedge t_4\wedge x_1\wedge x_2\wedge x_3\wedge x_4\mapsto
	t_1\wedge t_2\wedge t_3\wedge  x_1\wedge x_2\wedge t_4\wedge x_3\wedge x_4 \ \ \cdot 
\end{align*}
\end{proof}
\

\begin{prop}\label{Proposition 3.3}
Via the isomorphisms above the morphisms $\hat{g}\circ \tilde{p}^{-1}\circ \hat{j}^{-1}\circ i_1\circ \pi_1\circ \mathrm{id}_{S^2\wedge \mathrm{SL}_{2}\wedge \mathrm{SL}_{2}}\wedge\tilde\Delta$ and $\hat{g}\circ \tilde{p}^{-1}\circ \hat{j}^{-1}\circ i_2\circ \pi_2\circ \mathrm{id}_{S^2\wedge \mathrm{SL}_{2}\wedge \mathrm{SL}_{2}}\wedge\tilde\Delta$ can be identified with $\eta_{4+(4)}$.\\
\end{prop}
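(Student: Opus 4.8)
The plan is to strip the two composites of their formally trivial parts, recognise what survives as a Hopf construction, and then invoke Proposition~\ref{Propostion 2.6}. First, $\pi_1\circ(\mathrm{id}_{S^2\wedge\mathrm{SL}_{2}\wedge\mathrm{SL}_{2}}\wedge\tilde\Delta)$ and $\pi_2\circ(\mathrm{id}_{S^2\wedge\mathrm{SL}_{2}\wedge\mathrm{SL}_{2}}\wedge\tilde\Delta)$ are both the identity, since projecting the diagonal $\mathbb{G}_{m}\to\mathbb{G}_{m}\times\mathbb{G}_{m}$ onto either factor is the identity; so the two morphisms in question are $\hat{g}\circ\tilde{p}^{-1}\circ\hat{j}^{-1}\circ i_1$ and $\hat{g}\circ\tilde{p}^{-1}\circ\hat{j}^{-1}\circ i_2$. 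Since $\tilde{p}$ is the canonical projection from the join to the suspension of the smash, $\hat{g}\circ\tilde{p}^{-1}$ is by construction the Hopf construction $H(\phi)$ of the pointed morphism $\phi\colon(S^1\wedge\mathrm{SL}_{2}\wedge\mathrm{SL}_{2})\times(\mathbb{G}_{m}\times\mathbb{G}_{m})\to S^1\wedge\mathrm{SL}_{2}\wedge\mathrm{SL}_{2}$, $(t\wedge A\wedge B,(x,y))\mapsto t\wedge\alpha(A,x)\wedge\alpha(B,y)$. Writing $\iota_1,\iota_2\colon\mathbb{G}_{m}\to\mathbb{G}_{m}\times\mathbb{G}_{m}$ for the factor inclusions $x\mapsto(x,1)$ and $x\mapsto(1,x)$, the explicit formula $j=i_1\circ\Sigma\pi_1+i_2\circ\Sigma\pi_2+i\circ\Sigma p$ for the stable splitting gives $j\circ\Sigma\iota_k=i_k$, hence $\hat{j}^{-1}\circ i_k=\mathrm{id}_{S^2\wedge\mathrm{SL}_{2}\wedge\mathrm{SL}_{2}}\wedge\iota_k$.

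Next I would use the naturality of the canonical projection from a reduced join to the suspension of the smash, which gives $\tilde{p}^{-1}\circ(\mathrm{id}\wedge\iota_k)=(\mathrm{id}\ast\iota_k)\circ(p')^{-1}$, where $p'\colon(S^1\wedge\mathrm{SL}_{2}\wedge\mathrm{SL}_{2})\ast\mathbb{G}_{m}\to\Sigma((S^1\wedge\mathrm{SL}_{2}\wedge\mathrm{SL}_{2})\wedge\mathbb{G}_{m})$ is the analogous projection. Since $\hat{g}\circ(\mathrm{id}\ast\iota_k)$ is the morphism on joins induced by $\phi\circ(\mathrm{id}\times\iota_k)$, and $\alpha(-,1)=\mathrm{id}_{\mathrm{SL}_{2}}$, the $k$-th composite is the Hopf construction $H(\psi_k)$, where $\psi_1(t\wedge A\wedge B,x)=t\wedge\alpha(A,x)\wedge B$ and $\psi_2(t\wedge A\wedge B,x)=t\wedge A\wedge\alpha(B,x)$. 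Thus the problem reduces to showing $H(\psi_1)=H(\psi_2)=\eta_{4+(4)}$.

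To finish, I would establish an exterior-smash compatibility for Hopf constructions: if $\varphi\colon X\times Y\to X$ is pointed with $\varphi(x_0,-)=x_0$ and $\varphi(-,y_0)=\mathrm{id}_X$, and $V$ is a pointed motivic space, then the Hopf construction of $(v\wedge x,y)\mapsto v\wedge\varphi(x,y)$ is $\mathrm{id}_V\wedge H(\varphi)$, and symmetrically for a spectator smashed on the right; this follows because $(V\wedge X)\ast Y$ is obtained from $\Delta^1\times X\times Y/{\sim}$ by smashing the $X$-slot with $V$, together with naturality of the join-to-suspension projection. Both $\psi_1$ and $\psi_2$ arise from $\alpha\colon\mathrm{SL}_{2}\times\mathbb{G}_{m}\to\mathrm{SL}_{2}$ by smashing with identity spectators — for $\psi_2$ with $S^1\wedge\mathrm{SL}_{2}$ on the left, for $\psi_1$ with $S^1$ on the left and $\mathrm{SL}_{2}$ on the right — so by Proposition~\ref{Propostion 2.6}, which gives $H(\alpha)=\eta_{2+(2)}$, each $H(\psi_k)$ becomes, up to a permutation of the simplicial and Tate sphere coordinates, the smash of $\eta_{2+(2)}$ with $S^2\wedge\mathbb{G}_{m}^2$, which is $\eta_{4+(4)}$.

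The main obstacle is precisely this last reordering: one has to transport $\eta_{2+(2)}$ through the permutation determined by the isomorphisms $(\ast)$, $(\ast\ast)$ and by the position of the $\mathrm{SL}_{2}$-factor on which $\alpha$ acts, and this is exactly the bookkeeping that Corollaries~\ref{Corollary 2.2}, \ref{Corollary 2.3} and \ref{Corollary 2.4} were set up to handle. A transposition of two simplicial circles is $-\mathrm{id}$, one of a simplicial and a Tate circle is the identity, one of two Tate circles is governed by $\epsilon_{(1)}$, so a transposition of two $\mathrm{SL}_{2}$-blocks (each of Tate weight two) contributes only a sign; since each such sign occurs in a matching pair on source and target, and any single residual $\epsilon$ is absorbed using the relations $h_{1+(2)}\circ\eta_{1+(2)}=0$ and $\eta_{1+(2)}\circ h_{1+(3)}=0$ of \cite{dong2024motivictodabrackets} (equivalently $\epsilon_{1+(2)}\circ\eta_{1+(2)}=\eta_{1+(2)}$ and $\eta_{1+(2)}\circ\epsilon_{1+(3)}=\eta_{1+(2)}$), the permutation acts trivially on $\eta_{2+(2)}$ and one lands on $\eta_{4+(4)}$ on the nose. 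For the $i_1$-composite there is in addition a transposition of the two $\mathrm{SL}_{2}$-factors to absorb, which is where Corollary~\ref{Corollary 2.4} enters.
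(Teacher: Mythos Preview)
Your outline follows the same strategy as the paper: reduce to $\hat{g}\circ\tilde{p}^{-1}\circ\hat{j}^{-1}\circ i_k$, recognise the result as a Hopf construction built from $\alpha$ with smash spectators, invoke Proposition~\ref{Propostion 2.6}, and then absorb the coordinate permutations. The paper makes the same moves, so your plan is sound.

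Where your write-up is thinner than the paper is precisely the ``exterior-smash compatibility for Hopf constructions''. Your justification --- that $(V\wedge X)\ast Y$ is obtained from $\Delta^1\times X\times Y/{\sim}$ by smashing the $X$-slot with $V$ --- is not literally correct: the na\"ive map $\overline{(t,v\wedge x,y)}\mapsto v\wedge\overline{(t,x,y)}$ is \emph{not} well defined on the reduced join, because $\overline{(t,x_0,y)}$ need not be the basepoint of $X\ast Y$ when $t\neq 0$ and $y\neq y_0$. The paper handles this by passing to the cofibre $(S^1\wedge\mathrm{SL}_2\wedge\mathrm{SL}_2)\ast\mathbb{G}_m/C'\mathbb{G}_m$ (which kills exactly the offending locus), introducing the intermediate space $(S^1\wedge\mathrm{SL}_2)\wedge(\mathrm{SL}_2\ast\mathbb{G}_m)$, and comparing via explicit maps $\theta_1,\theta_2,q,\bar g$. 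That cofibre step is the technical core here, and you would need to supply it to make your compatibility principle rigorous.

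A minor point: the paper does not use Corollary~\ref{Corollary 2.4} in this proof; it writes out the two reordering isomorphisms explicitly, observes that the $S^4$-part is a $3$-cycle (hence $+1$) and that the $\mathbb{G}_m$-part is an even permutation (hence the identity by repeated use of Corollary~\ref{Corollary 2.3}), and concludes that both reorderings are the identity on the nose. Your heuristic about signs on $\mathrm{SL}_2$-blocks and absorbing a residual $\epsilon$ via $\eta\circ h=0$ is in the right spirit, but the paper's argument is cleaner: no residual $\epsilon$ appears at all.
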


\begin{proof}
We first start with $\hat{g}\circ \tilde{p}^{-1}\circ \hat{j}^{-1}\circ i_2\circ \pi_2\circ \mathrm{id}_{S^2\wedge \mathrm{SL}_{2}\wedge \mathrm{SL}_{2}}\wedge\tilde\Delta$. Let $\tilde i_2:S^2\wedge\mathrm{SL}_{2}\wedge \mathrm{SL}_{2}\wedge \mathbb{G}_{m}\rightarrow S^2\wedge\mathrm{SL}_{2}\wedge \mathrm{SL}_{2}\wedge(\mathbb{G}_{m}\times\mathbb{G}_{m}) $ be the morphism which is induced by the projection $\mathbb{G}_{m}\times\mathbb{G}_{m}\rightarrow \mathbb{G}_{m}; (x,y)\mapsto y$. Then we get $\hat{g}\circ \tilde{p}^{-1}\circ \hat{j}^{-1}\circ i_2\circ \pi_2\circ \mathrm{id}_{S^2\wedge \mathrm{SL}_{2}\wedge \mathrm{SL}_{2}}\wedge\tilde\Delta=\hat{g}\circ \tilde{p}^{-1}\circ \hat{j}^{-1}\circ i_2= \hat{g}\circ \tilde{p}^{-1}\circ \tilde i_2$. Let $\hat{i}_2: (S^1\wedge\mathrm{SL}_{2}\wedge \mathrm{SL}_{2})\ast\mathbb{G}_{m}\rightarrow (S^1\wedge\mathrm{SL}_{2}\wedge \mathrm{SL}_{2})\ast(\mathbb{G}_{m}\times \mathbb{G}_{m})$ be the corresponding morphism induced by the projection $\mathbb{G}_{m}\times\mathbb{G}_{m}\rightarrow \mathbb{G}_{m}; (x,y)\mapsto y$. Then we get the following commutative diagram
\[\begin{tikzcd}
	{(S^1\wedge\mathrm{SL}_{2}\wedge \mathrm{SL}_{2})\ast\mathbb{G}_{m}} & {(S^1\wedge\mathrm{SL}_{2}\wedge \mathrm{SL}_{2})\ast(\mathbb{G}_{m}\times \mathbb{G}_{m})} & {S^2\wedge\mathrm{SL}_{2}\wedge \mathrm{SL}_{2}} \\
	{S^2\wedge\mathrm{SL}_{2}\wedge \mathrm{SL}_{2}\wedge \mathbb{G}_{m}} & {S^2\wedge\mathrm{SL}_{2}\wedge \mathrm{SL}_{2}\wedge(\mathbb{G}_{m}\times\mathbb{G}_{m})}
	\arrow["{\hat{i}_2}", from=1-1, to=1-2]
	\arrow["{\hat{g}}", from=1-2, to=1-3]
	\arrow["{p^{-1}}", from=2-1, to=1-1]
	\arrow["{\tilde i_2}"', from=2-1, to=2-2]
	\arrow["{\tilde{p}^{-1}}"', from=2-2, to=1-2]
\end{tikzcd}\] It follows that $\hat{g}\circ \tilde{p}^{-1}\circ \tilde i_2=\hat g\circ \hat i_2\circ p^{-1}$.\\

Next we consider the motivic space $(S^1\wedge \mathrm{SL}_{2})\wedge (\mathrm{SL}_{2}\ast \mathbb{G}_{m})$. There is a natural weak equivalence $q: (S^1\wedge \mathrm{SL}_{2})\wedge (\mathrm{SL}_{2}\ast \mathbb{G}_{m})\rightarrow S^2\wedge\mathrm{SL}_{2}\wedge \mathrm{SL}_{2}\wedge \mathbb{G}_{m}$ which is the composite \[\begin{tikzcd}
	{(S^1\wedge \mathrm{SL}_{2})\wedge (\mathrm{SL}_{2}\ast \mathbb{G}_{m})} & {S^1\wedge \mathrm{SL}_{2}\wedge S^1\wedge\mathrm{SL}_{2}\wedge \mathbb{G}_{m}} \\
	& {S^2\wedge\mathrm{SL}_{2}\wedge \mathrm{SL}_{2}\wedge \mathbb{G}_{m}}
	\arrow["\sim", from=1-1, to=1-2]
	\arrow["\sim", from=1-2, to=2-2]
\end{tikzcd}\] where the second weak equivalence is given by $t\wedge A\wedge s \wedge B\wedge x\mapsto s\wedge t\wedge A\wedge B\wedge x$. Moreover, we can also define a morphism $\bar g: (S^1\wedge \mathrm{SL}_{2})\wedge (\mathrm{SL}_{2}\ast \mathbb{G}_{m})\rightarrow S^2\wedge\mathrm{SL}_{2}\wedge \mathrm{SL}_{2}; (t\wedge A)\wedge \overline{(s,B,x)}\mapsto s\wedge t\wedge A \wedge \alpha(B,x)$. Let $C'\mathbb{G}_{m}$ be the cone $\Delta^1\wedge \mathbb{G}_{m}$ for $\Delta^1$ based at 0. Then there is a canonical inclusion $C'\mathbb{G}_{m}\rightarrow (S^1\wedge\mathrm{SL}_{2}\wedge \mathrm{SL}_{2})\ast\mathbb{G}_{m}$. In particular, we consider now the cofiber $(S^1\wedge\mathrm{SL}_{2}\wedge \mathrm{SL}_{2})\ast\mathbb{G}_{m}/C'\mathbb{G}_{m} $. We can also define a morphism $\theta_1: (S^1\wedge \mathrm{SL}_{2})\wedge (\mathrm{SL}_{2}\ast \mathbb{G}_{m})\rightarrow (S^1\wedge\mathrm{SL}_{2}\wedge \mathrm{SL}_{2})\ast\mathbb{G}_{m}/C'\mathbb{G}_{m}; (t\wedge A)\wedge \overline{(s,B,x)}\mapsto \overline{(\overline {s, t\wedge A\wedge B, x})}$. Let $\theta_2:(S^1\wedge\mathrm{SL}_{2}\wedge \mathrm{SL}_{2})\ast\mathbb{G}_{m}\rightarrow (S^1\wedge\mathrm{SL}_{2}\wedge \mathrm{SL}_{2})\ast\mathbb{G}_{m}/C'\mathbb{G}_{m}$ be the canonical projection. Altogether we get a commutative diagram \[\begin{tikzcd}
	& {(S^1\wedge\mathrm{SL}_{2}\wedge \mathrm{SL}_{2})\ast\mathbb{G}_{m}} \\
	{S^2\wedge\mathrm{SL}_{2}\wedge \mathrm{SL}_{2}\wedge \mathbb{G}_{m}} & {(S^1\wedge\mathrm{SL}_{2}\wedge \mathrm{SL}_{2})\ast\mathbb{G}_{m}/C'\mathbb{G}_{m} } & {S^2\wedge\mathrm{SL}_{2}\wedge \mathrm{SL}_{2}} \\
	& {(S^1\wedge \mathrm{SL}_{2})\wedge (\mathrm{SL}_{2}\ast \mathbb{G}_{m})} & \cdot
	\arrow["p"', from=1-2, to=2-1]
	\arrow["{\theta_2}", from=1-2, to=2-2]
	\arrow["{\hat g\circ \hat i_2}", from=1-2, to=2-3]
	\arrow[from=2-2, to=2-1]
	\arrow[from=2-2, to=2-3]
	\arrow["q", from=3-2, to=2-1]
	\arrow["{\theta_1}"', from=3-2, to=2-2]
	\arrow["{\bar g}"', from=3-2, to=2-3]
\end{tikzcd}\]
In the above diagram the morphism $(S^1\wedge\mathrm{SL}_{2}\wedge \mathrm{SL}_{2})\ast\mathbb{G}_{m}/C'\mathbb{G}_{m} \rightarrow S^2\wedge\mathrm{SL}_{2}\wedge \mathrm{SL}_{2}\wedge \mathbb{G}_{m}$ is induced by $p$. Similarly, the morphism $(S^1\wedge\mathrm{SL}_{2}\wedge \mathrm{SL}_{2})\ast\mathbb{G}_{m}/C'\mathbb{G}_{m}\rightarrow S^2\wedge\mathrm{SL}_{2}\wedge \mathrm{SL}_{2}$ is induced by $\hat g\circ \hat i_2$. From the commutativity of the diagram we conclude that $\hat g\circ \hat i_2\circ p^{-1}= \bar g\circ q^{-1}$. So we look at $\bar g\circ q^{-1}$.\\

By unraveling the definitions of the morphisms we see that $\bar g\circ q^{-1}$ is equal to the composite\[\begin{tikzcd}
	{S^2\wedge\mathrm{SL}_{2}\wedge\mathrm{SL}_{2}\wedge \mathbb{G}_{m}} & {S^1\wedge\mathrm{SL}_{2}\wedge S^1\wedge \mathrm{SL}_{2}\wedge \mathbb{G}_{m}} \\
	& {S^1\wedge\mathrm{SL}_{2}\wedge S^1\wedge \mathrm{SL}_{2}} \\
	& {S^2\wedge\mathrm{SL}_{2}\wedge\mathrm{SL}_{2}}
	\arrow["\sim", from=1-1, to=1-2]
	\arrow["{\mathrm{id}_{S^1\wedge\mathrm{SL}_{2}}\wedge H(\alpha)}", from=1-2, to=2-2]
	\arrow["\sim", from=2-2, to=3-2]
\end{tikzcd}\] where the first isomorphism is given by $s\wedge t\wedge A\wedge B\wedge x\mapsto t\wedge A\wedge s \wedge B\wedge x$. The second isomorphism is defined by $t\wedge A\wedge s\wedge B\mapsto s\wedge t\wedge A\wedge B$.

Via the identifications $(\ast)$ and $(\ast \ast)$ given on page 20 this composite can be written as the following composite \[\begin{tikzcd}
	{S^4\wedge\mathbb{G}_{m}^5} & {S^4\wedge\mathbb{G}_{m}^5} \\
	& {S^4\wedge\mathbb{G}_{m}^4} \\
	& {S^4\wedge\mathbb{G}_{m}^4}
	\arrow["{\textcircled{1}}", from=1-1, to=1-2]
	\arrow["{\mathrm{id}_{S^2}\wedge H(\alpha)\wedge\mathrm{id}_{\mathbb{G}_m^2}}", from=1-2, to=2-2]
	\arrow["{\textcircled{2}}", from=2-2, to=3-2]
\end{tikzcd}\] where the isomorphism $\textcircled{1}$ is given by $	t_1\wedge t_2\wedge t_3\wedge t_4\wedge x_1\wedge x_2\wedge x_3\wedge x_4\wedge x_5\mapsto t_2\wedge t_3\wedge t_1\wedge t_4\wedge x_3\wedge x_4\wedge x_5\wedge x_1\wedge x_2 $. The isomorphism $\textcircled{2}$ is defined by $t_1\wedge t_2\wedge t_3\wedge t_4\wedge x_1\wedge x_2\wedge x_3\wedge x_4\mapsto t_3\wedge t_1\wedge t_2\wedge t_4\wedge x_3\wedge x_4\wedge x_1\wedge x_2$.\\

We note that the permutation $S^4\rightarrow S^4; t_1\wedge t_2\wedge t_3\wedge t_4\mapsto t_2\wedge t_3\wedge t_1\wedge t_4$ is $\mathrm{id}_{S^3}$. Therefore the morphism $\textcircled{1}$ is equal to the morphism given by $t_1\wedge t_2\wedge t_3\wedge t_4\wedge x_1\wedge x_2\wedge x_3\wedge x_4\wedge x_5\mapsto t_1\wedge t_2\wedge t_3\wedge t_4\wedge x_3\wedge x_4\wedge x_5\wedge x_1\wedge x_2 $. By Corollary~\ref{Corollary 2.3} this morphism equals to the identity. Via the same arguments we can show that the morphism $\textcircled{2}$ is also the identity. By Proposition~\ref{Propostion 2.6} the morphism $\mathrm{id}_{S^2}\wedge H(\alpha)\wedge\mathrm{id}_{\mathbb{G}_m^2}$ is equal to $\eta_{4+(4)}$. Using similar arguments we can further show that $\hat{g}\circ \tilde{p}^{-1}\circ \hat{j}^{-1}\circ i_1\circ \pi_1\circ \mathrm{id}_{S^2\wedge \mathrm{SL}_{2}\wedge \mathrm{SL}_{2}}\wedge\tilde\Delta$ can be identified with $\eta_{4+(4)}$, too.\\
\end{proof}

\begin{prop}\label{Proposition 3.4}
	Via the identifications $(\ast)$ and $(\ast \ast)$ on page 20 the morphism $\hat{g}\circ \tilde{p}^{-1}\circ \hat{j}^{-1}\circ i_3\circ \pi_3\circ \mathrm{id}_{S^2\wedge \mathrm{SL}_{2}\wedge \mathrm{SL}_{2}}\wedge\tilde\Delta$ is equal to $\eta_{4+(4)}\circ \eta_{4+(5)}\circ \Delta_{4+(6)}$.\\
	\end{prop}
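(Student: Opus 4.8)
The plan is to imitate the proof of Proposition~\ref{Proposition 3.3}: first peel off a diagonal, then recognise what is left as an iterated Hopf construction on $\alpha$. To begin with, $\pi_3$ is the canonical projection induced on the last two coordinates by $\mathbb{G}_{m}\times\mathbb{G}_{m}\to\mathbb{G}_{m}\wedge\mathbb{G}_{m}$, and $\tilde\Delta:\mathbb{G}_{m}\to\mathbb{G}_{m}\times\mathbb{G}_{m}$ is $x\mapsto(x,x)$; hence $\pi_3\circ(\mathrm{id}_{S^2\wedge\mathrm{SL}_{2}\wedge\mathrm{SL}_{2}}\wedge\tilde\Delta)$ is $\mathrm{id}_{S^2\wedge\mathrm{SL}_{2}\wedge\mathrm{SL}_{2}}\wedge\Delta_{(2)}$, where $\Delta_{(2)}:\mathbb{G}_{m}\to\mathbb{G}_{m}\wedge\mathbb{G}_{m}$ is the diagonal, and under the identifications $(\ast)$ and $(\ast\ast)$ this is exactly $\Delta_{4+(6)}$. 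So it suffices to identify $\hat{g}\circ\tilde{p}^{-1}\circ\hat{j}^{-1}\circ i_3$ with $\eta_{4+(4)}\circ\eta_{4+(5)}$.

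Next I would recognise this morphism as the ``smash part'' of a Hopf construction. Write $C:=S^1\wedge\mathrm{SL}_{2}\wedge\mathrm{SL}_{2}$. Since $\tilde p$ is the canonical projection $C\ast(\mathbb{G}_{m}\times\mathbb{G}_{m})\to\Sigma(C\wedge(\mathbb{G}_{m}\times\mathbb{G}_{m}))$ and $\hat g$ is the morphism induced by the pointed map $G:C\times(\mathbb{G}_{m}\times\mathbb{G}_{m})\to C$, $(t\wedge A\wedge B,(x,y))\mapsto t\wedge\alpha(A,x)\wedge\alpha(B,y)$, we have $\hat g\circ\tilde p^{-1}=H(G)$. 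Moreover $\hat j$ is $\mathrm{id}_{S^1\wedge\mathrm{SL}_{2}\wedge\mathrm{SL}_{2}}$ smashed with the stable splitting of $S^1\wedge(\mathbb{G}_{m}\times\mathbb{G}_{m})$, so $\hat j^{-1}\circ i_3$ is $\mathrm{id}_{C}$ smashed with the section $\Sigma(\mathbb{G}_{m}\wedge\mathbb{G}_{m})\to\Sigma(\mathbb{G}_{m}\times\mathbb{G}_{m})$ of that splitting, i.e.\ the map $j^{-1}\circ i$ introduced before Lemma~\ref{Lemma 2.7}. Thus $\hat{g}\circ\tilde{p}^{-1}\circ\hat{j}^{-1}\circ i_3$ is the part of $H(G)$ that survives after restricting the $(\mathbb{G}_{m}\times\mathbb{G}_{m})$-input to its smash factor.

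Now I would factor $G$ into the two commuting single-factor operations $G_1:C\times\mathbb{G}_{m}\to C$, $(t\wedge A\wedge B,x)\mapsto t\wedge\alpha(A,x)\wedge B$, and $G_2:C\times\mathbb{G}_{m}\to C$, $(t\wedge A\wedge B,y)\mapsto t\wedge A\wedge\alpha(B,y)$, so that $G(c,(x,y))=G_2(G_1(c,x),y)$. Feeding this factorisation through the stable splitting of $\Sigma(\mathbb{G}_{m}\times\mathbb{G}_{m})$, exactly as in the discussion preceding Lemma~\ref{Lemma 2.7}, one identifies the smash part of $H(G)$ with the composite $H(G_2)\circ(H(G_1)\wedge\mathrm{id}_{\mathbb{G}_{m}})$, up to permutations of simplicial and Tate circles which are neutralised by Corollaries~\ref{Corollary 2.3} and~\ref{Corollary 2.4}. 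Each of $H(G_1)$ and $H(G_2)$ applies one copy of $\alpha$ to a single $\mathrm{SL}_{2}$-factor, with the extra $S^1$ and the other $\mathrm{SL}_{2}$-factor carried along, so — after killing the intervening permutations by Corollary~\ref{Corollary 2.3}, which is verbatim the computation done for $\bar g\circ q^{-1}$ in the proof of Proposition~\ref{Proposition 3.3} — each becomes $\mathrm{id}_{S^1\wedge\mathrm{SL}_{2}}\wedge H(\alpha)$ and hence equals $\eta_{4+(4)}$ by Proposition~\ref{Propostion 2.6}. Therefore $H(G_1)\wedge\mathrm{id}_{\mathbb{G}_{m}}=\eta_{4+(5)}$ and $\hat{g}\circ\tilde{p}^{-1}\circ\hat{j}^{-1}\circ i_3=\eta_{4+(4)}\circ\eta_{4+(5)}$, which together with the first step proves the proposition.

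The main obstacle is precisely the identification, in the third step, of the smash part of the Hopf construction of the iterated map $G$ with $H(G_2)\circ(H(G_1)\wedge\mathrm{id}_{\mathbb{G}_{m}})$. Unstably one has only $\Sigma^2H(f)=\Sigma^3 f\circ\Sigma^2\chi$, so the stable manipulation of splittings cannot be transported from $\mathcal{SH}(\mathbb{Z})$ directly; instead one argues with joins, as in Propositions~\ref{Proposition 3.1} and~\ref{Proposition 3.2} — concretely with the iterated join obtained by forming $\mathrm{SL}_{2}\ast\mathbb{G}_{m}$, smashing it into $C$, and then joining with a second copy of $\mathbb{G}_{m}$ — and, whenever a genuinely topological homotopy is needed, passes through the geometric realization functor of \cite[Section~2]{dong2024motivictodabrackets}, just as in the proof of Proposition~\ref{Propostion 2.6}.
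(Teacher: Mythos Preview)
Your opening step agrees with the paper: $\pi_3\circ(\mathrm{id}\wedge\tilde\Delta)=\mathrm{id}\wedge\Delta_{(2)}$, and this supplies the outer $\Delta_{4+(6)}$. From there, however, the paper does \emph{not} try to factor $G$ as $G_2\circ(G_1\times\mathrm{id})$ and compare the smash part of $H(G)$ with an iterated Hopf construction. Instead it introduces the model $(\mathrm{SL}_{2}\ast\mathbb{G}_{m})\wedge(\mathrm{SL}_{2}\ast\mathbb{G}_{m})$ together with the two maps
\[
\gamma:\ \overline{(s,A,x)}\wedge\overline{(t,B,y)}\mapsto s\wedge t\wedge A\wedge B\wedge(x,y),\qquad
\gamma':\ \overline{(s,A,x)}\wedge\overline{(t,B,y)}\mapsto s\wedge t\wedge A\wedge B\wedge x\wedge y,
\]
and shows $\hat j\circ\gamma=i_3\circ\gamma'$ by observing that the $i_1$- and $i_2$-components of $\hat j\circ\gamma$ factor through the collapse map $\mathrm{SL}_{2}\ast\mathbb{G}_{m}\to S^1\wedge\mathrm{SL}_{2}$, which is nullhomotopic by \cite[Lemma~C.2]{motihopf}. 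This gives $\hat j^{-1}\circ i_3=\gamma\circ\gamma'^{-1}$ without ever touching the splitting section $j^{-1}\circ i$. A short cone/cofiber argument then identifies $\hat g\circ\tilde p^{-1}\circ\gamma$ with the explicit map $g'$ sending $\overline{(s,A,x)}\wedge\overline{(t,B,y)}$ to $s\wedge t\wedge\alpha(A,x)\wedge\alpha(B,y)$, which visibly unwinds under $(\ast)$ and $(\ast\ast)$ to $\eta_{4+(4)}\circ\eta_{4+(5)}$ composed with three permutation isomorphisms that are computed (via Corollary~\ref{Corollary 2.3}) to be $-\mathrm{id}$, $\mathrm{id}$, $-\mathrm{id}$.

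Your route---factoring $G$ and matching the smash part of $H(G)$ with $H(G_2)\circ(H(G_1)\wedge\mathrm{id})$---is plausible, and you correctly flag that this identification is the whole difficulty unstably. But your sketch of how to resolve it (an iterated join built from $\mathrm{SL}_{2}\ast\mathbb{G}_{m}$ smashed into $C$ and then joined again) is not carried out, and it is not the paper's mechanism. What makes the paper's argument go through cleanly is precisely the nullhomotopy of the collapse map from the join, which kills the cross terms in $\hat j\circ\gamma$ and reduces everything to the explicit $g'$; no geometric-realization step is needed in this proof. If you pursue your factorisation approach you will likely end up re-deriving an equivalent statement, but the smash-of-two-joins model together with \cite[Lemma~C.2]{motihopf} is the missing idea that turns the sketch into a proof.
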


\begin{proof}
Recall that $\Delta$ denotes the diagonal morphism $\mathbb{G}_{m}\rightarrow \mathbb{G}_{m}\wedge \mathbb{G}_{m}$. The morphism $\hat{g}\circ \tilde{p}^{-1}\circ \hat{j}^{-1}\circ i_3\circ \pi_3\circ \mathrm{id}_{S^2\wedge \mathrm{SL}_{2}\wedge \mathrm{SL}_{2}}\wedge\tilde\Delta$ is then equal to $\hat{g}\circ \tilde{p}^{-1}\circ \hat{j}^{-1}\circ i_3 \circ \mathrm{id}_{S^2\wedge \mathrm{SL}_{2}\wedge \mathrm{SL}_{2}}\wedge \Delta$. We consider now the motivic space $(\mathrm{SL}_{2}\ast \mathbb{G}_{m}) \wedge (\mathrm{SL}_{2}\ast \mathbb{G}_{m}) $. We define a morphism $\gamma:(\mathrm{SL}_{2}\ast \mathbb{G}_{m}) \wedge (\mathrm{SL}_{2}\ast \mathbb{G}_{m})\rightarrow S^2\wedge\mathrm{SL}_{2}\wedge \mathrm{SL}_{2}\wedge (\mathbb{G}_{m}\times \mathbb{G}_{m}); \overline{(s,A,x)}\wedge \overline{(t,B,y)}\mapsto s\wedge t\wedge A\wedge B\wedge (x,y) $. Moreover, there is also a natural weak equivalence $\gamma':(\mathrm{SL}_{2}\ast \mathbb{G}_{m}) \wedge (\mathrm{SL}_{2}\ast \mathbb{G}_{m})\rightarrow S^2\wedge\mathrm{SL}_{2}\wedge \mathrm{SL}_{2}\wedge \mathbb{G}_{m}\wedge \mathbb{G}_{m}; \overline{(s,A,x)}\wedge \overline{(t,B,y)}\mapsto s\wedge t\wedge A\wedge B\wedge x\wedge y$. We would like to show that the diagram \[\begin{tikzcd}
	{(\mathrm{SL}_{2}\ast \mathbb{G}_{m}) \wedge (\mathrm{SL}_{2}\ast \mathbb{G}_{m})} & {S^2\wedge\mathrm{SL}_{2}\wedge \mathrm{SL}_{2}\wedge (\mathbb{G}_{m}\times \mathbb{G}_{m})} \\
	{S^2\wedge\mathrm{SL}_{2}\wedge \mathrm{SL}_{2}\wedge \mathbb{G}_{m}\wedge \mathbb{G}_{m}} & {\mathcal{W}}
	\arrow["\gamma", from=1-1, to=1-2]
	\arrow["{\gamma'}"', from=1-1, to=2-1]
	\arrow["{\hat j}", from=1-2, to=2-2]
	\arrow["{i_3}", from=2-1, to=2-2]
\end{tikzcd}\]
commutes.\\

The smash product $(\mathrm{SL}_{2}\ast \mathbb{G}_{m}) \wedge (\mathrm{SL}_{2}\ast \mathbb{G}_{m})$ inherits a coproduct structure from $\mathrm{SL}_{2}\ast \mathbb{G}_{m}\simeq S^1\wedge \mathrm{SL}_{2}\wedge \mathbb{G}_{m} $; so there is a morphism \[\begin{tikzcd}
	{(\mathrm{SL}_{2}\ast \mathbb{G}_{m}) \wedge (\mathrm{SL}_{2}\ast \mathbb{G}_{m})} & {((\mathrm{SL}_{2}\ast \mathbb{G}_{m}) \wedge (\mathrm{SL}_{2}\ast \mathbb{G}_{m}))\vee ((\mathrm{SL}_{2}\ast \mathbb{G}_{m}) \wedge (\mathrm{SL}_{2}\ast \mathbb{G}_{m}))}
	\arrow[from=1-1, to=1-2]
\end{tikzcd}\] which is a part of the coproduct structure on $(\mathrm{SL}_{2}\ast \mathbb{G}_{m}) \wedge (\mathrm{SL}_{2}\ast \mathbb{G}_{m})$. Furthermore there is also such a morphism for $S^2\wedge\mathrm{SL}_{2}\wedge \mathrm{SL}_{2}\wedge (\mathbb{G}_{m}\times \mathbb{G}_{m})$, since this motivic space is a coproduct, too. Now by definition $\gamma$ is compatible with these two morphisms which are parts of the corresponding coproduct structures. It follows that the composite $\hat j\circ \gamma$ can be written as the sum $i_1\circ \pi_1\circ \gamma+ i_2\circ \pi_2\circ \gamma+ i_3\circ \pi_3\circ \gamma $.\\

The morphism $ \pi_1\circ \gamma$ is given by $\overline{(s,A,x)}\wedge \overline{(t,B,y)}\mapsto s\wedge t\wedge A\wedge B\wedge x$. By \cite[Lemma C.2]{motihopf} the morphism $\mathrm{SL}_{2}\ast \mathbb{G}_{m}\rightarrow S^1\wedge\mathrm{SL}_{2}; \overline{(t,B,y)}\mapsto t\wedge B $ is nullhomotopic. Thus the summand $i_1\circ \pi_1\circ \gamma$ is nullhomotopic. Similarly, the summand $i_2\circ \pi_2\circ \gamma$ is also nullhomotopic. Hence the composite  $\hat j\circ \gamma$ is equal to  $i_3\circ \pi_3\circ \gamma $. We see immediately that $i_3\circ \pi_3\circ \gamma $ equals to $i_3\circ \gamma'$. We conclude that $\hat{g}\circ \tilde{p}^{-1}\circ \hat{j}^{-1}\circ i_3 \circ \mathrm{id}_{S^2\wedge \mathrm{SL}_{2}\wedge \mathrm{SL}_{2}}\wedge \Delta$ is equal to $\hat{g}\circ \tilde{p}^{-1}\circ \gamma \circ \gamma'^{-1} \circ \mathrm{id}_{S^2\wedge \mathrm{SL}_{2}\wedge \mathrm{SL}_{2}}\wedge \Delta$.\\

In the next step we define a morphism $g': (\mathrm{SL}_{2}\ast \mathbb{G}_{m}) \wedge (\mathrm{SL}_{2}\ast \mathbb{G}_{m})\rightarrow S^2\wedge\mathrm{SL}_{2}\wedge \mathrm{SL}_{2}$ by sending $\overline{(s,A,x)}\wedge \overline{(t,B,y)}$ to $s\wedge t\wedge \alpha(A,x)\wedge \alpha(B,y)$. We want to show that the diagram \[\begin{tikzcd}
	& {S^2\wedge\mathrm{SL}_{2}\wedge \mathrm{SL}_{2}} \\
	\\
	& {(S^1\wedge\mathrm{SL}_{2}\wedge \mathrm{SL}_{2})\ast (\mathbb{G}_{m}\times \mathbb{G}_{m})} \\
	{(\mathrm{SL}_{2}\ast \mathbb{G}_{m}) \wedge (\mathrm{SL}_{2}\ast \mathbb{G}_{m})} & {S^2\wedge\mathrm{SL}_{2}\wedge \mathrm{SL}_{2}\wedge (\mathbb{G}_{m}\times \mathbb{G}_{m})}
	\arrow["{\hat g}"', from=3-2, to=1-2]
	\arrow["{\tilde p}", from=3-2, to=4-2]
	\arrow["{g'}"{description}, from=4-1, to=1-2]
	\arrow["\gamma", from=4-1, to=4-2]
\end{tikzcd}\] commutes in $\mathcal{H}_{\bullet}(\mathbb{Z})$. Let $C'(\mathbb{G}_{m}\times \mathbb{G}_{m})$ be the cone $\Delta^1\wedge (\mathbb{G}_{m}\times \mathbb{G}_{m})$ for $\Delta^1$ based at 0. There is a canonical inclusion $C'(\mathbb{G}_{m}\times \mathbb{G}_{m})\rightarrow (S^1\wedge\mathrm{SL}_{2}\wedge \mathrm{SL}_{2})\ast (\mathbb{G}_{m}\times \mathbb{G}_{m})$; thus we can take the cofiber $(S^1\wedge\mathrm{SL}_{2}\wedge \mathrm{SL}_{2})\ast (\mathbb{G}_{m}\times \mathbb{G}_{m})/ C'(\mathbb{G}_{m}\times \mathbb{G}_{m})$. The projection $(S^1\wedge\mathrm{SL}_{2}\wedge \mathrm{SL}_{2})\ast (\mathbb{G}_{m}\times \mathbb{G}_{m})\rightarrow (S^1\wedge\mathrm{SL}_{2}\wedge \mathrm{SL}_{2})\ast (\mathbb{G}_{m}\times \mathbb{G}_{m})/ C'(\mathbb{G}_{m}\times \mathbb{G}_{m}) $ is a weak equivalence. The morphism $\tilde p$ factors as follows \[\begin{tikzcd}
{(S^1\wedge\mathrm{SL}_{2}\wedge \mathrm{SL}_{2})\ast (\mathbb{G}_{m}\times \mathbb{G}_{m})} \\
{(S^1\wedge\mathrm{SL}_{2}\wedge \mathrm{SL}_{2})\ast (\mathbb{G}_{m}\times \mathbb{G}_{m})/ C'(\mathbb{G}_{m}\times \mathbb{G}_{m})} \\
{S^2\wedge\mathrm{SL}_{2}\wedge \mathrm{SL}_{2}\wedge (\mathbb{G}_{m}\times \mathbb{G}_{m})} & \cdot
\arrow["\sim", from=1-1, to=2-1]
\arrow[from=2-1, to=3-1]
\end{tikzcd}\]
Furthermore there is also a morphism $\delta: (\mathrm{SL}_{2}\ast \mathbb{G}_{m}) \wedge (\mathrm{SL}_{2}\ast \mathbb{G}_{m})\rightarrow (S^1\wedge\mathrm{SL}_{2}\wedge \mathrm{SL}_{2})\ast (\mathbb{G}_{m}\times \mathbb{G}_{m})/ C'(\mathbb{G}_{m}\times \mathbb{G}_{m})$
given by $\overline{(s,A,x)}\wedge \overline{(t,B,y)}\mapsto \overline{(\overline{s,t\wedge A\wedge B, (x,y) })}$. We consider the following diagram \[\begin{tikzcd}
	& {S^2\wedge\mathrm{SL}_{2}\wedge \mathrm{SL}_{2}} \\
	& {(S^1\wedge\mathrm{SL}_{2}\wedge \mathrm{SL}_{2})\ast (\mathbb{G}_{m}\times \mathbb{G}_{m})} \\
	& {(S^1\wedge\mathrm{SL}_{2}\wedge \mathrm{SL}_{2})\ast (\mathbb{G}_{m}\times \mathbb{G}_{m})/ C'(\mathbb{G}_{m}\times \mathbb{G}_{m})} \\
	{(\mathrm{SL}_{2}\ast \mathbb{G}_{m}) \wedge (\mathrm{SL}_{2}\ast \mathbb{G}_{m})} & {S^2\wedge\mathrm{SL}_{2}\wedge \mathrm{SL}_{2}\wedge (\mathbb{G}_{m}\times \mathbb{G}_{m})}
	\arrow["{\hat g}"', from=2-2, to=1-2]
	\arrow["\sim", from=2-2, to=3-2]
	\arrow[from=3-2, to=4-2]
	\arrow["{g'}", curve={height=-18pt}, from=4-1, to=1-2]
	\arrow["\delta"{description}, from=4-1, to=3-2]
	\arrow["\gamma", from=4-1, to=4-2]
\end{tikzcd}\] 
By direct inspection we see that \[\begin{tikzcd}
& {(S^1\wedge\mathrm{SL}_{2}\wedge \mathrm{SL}_{2})\ast (\mathbb{G}_{m}\times \mathbb{G}_{m})/ C'(\mathbb{G}_{m}\times \mathbb{G}_{m})} \\
{(\mathrm{SL}_{2}\ast \mathbb{G}_{m}) \wedge (\mathrm{SL}_{2}\ast \mathbb{G}_{m})} & {S^2\wedge\mathrm{SL}_{2}\wedge \mathrm{SL}_{2}\wedge (\mathbb{G}_{m}\times \mathbb{G}_{m})}
\arrow[from=1-2, to=2-2]
\arrow["\delta"{description}, from=2-1, to=1-2]
\arrow["\gamma", from=2-1, to=2-2]
\end{tikzcd}\] commutes. Next we look at \[\begin{tikzcd}
{(\mathrm{SL}_{2}\ast \mathbb{G}_{m}) \wedge (\mathrm{SL}_{2}\ast \mathbb{G}_{m})} & {S^2\wedge\mathrm{SL}_{2}\wedge \mathrm{SL}_{2}} \\
{(S^1\wedge\mathrm{SL}_{2}\wedge \mathrm{SL}_{2})\ast (\mathbb{G}_{m}\times \mathbb{G}_{m})/ C'(\mathbb{G}_{m}\times \mathbb{G}_{m})} & {(S^1\wedge\mathrm{SL}_{2}\wedge \mathrm{SL}_{2})\ast (\mathbb{G}_{m}\times \mathbb{G}_{m})}
\arrow["{g'}", from=1-1, to=1-2]
\arrow["\delta"', from=1-1, to=2-1]
\arrow["{\hat g}"', from=2-2, to=1-2]
\arrow["\sim", from=2-2, to=2-1]
\end{tikzcd}\]
We observe that $\hat g$ factors through $(S^1\wedge\mathrm{SL}_{2}\wedge \mathrm{SL}_{2})\ast (\mathbb{G}_{m}\times \mathbb{G}_{m})/ C'(\mathbb{G}_{m}\times \mathbb{G}_{m})$; hence there is a morphism $g'': (S^1\wedge\mathrm{SL}_{2}\wedge \mathrm{SL}_{2})\ast (\mathbb{G}_{m}\times \mathbb{G}_{m})/ C'(\mathbb{G}_{m}\times \mathbb{G}_{m})\rightarrow S^2\wedge\mathrm{SL}_{2}\wedge \mathrm{SL}_{2}$ such that \[\begin{tikzcd}
	& {S^2\wedge\mathrm{SL}_{2}\wedge \mathrm{SL}_{2}} \\
	{(S^1\wedge\mathrm{SL}_{2}\wedge \mathrm{SL}_{2})\ast (\mathbb{G}_{m}\times \mathbb{G}_{m})/ C'(\mathbb{G}_{m}\times \mathbb{G}_{m})} & {(S^1\wedge\mathrm{SL}_{2}\wedge \mathrm{SL}_{2})\ast (\mathbb{G}_{m}\times \mathbb{G}_{m})}
	\arrow["{g''}"{description}, from=2-1, to=1-2]
	\arrow["{\hat g}"', from=2-2, to=1-2]
	\arrow["\sim", from=2-2, to=2-1]
\end{tikzcd}\] commutes. Again by inspection we see that \[\begin{tikzcd}
{(\mathrm{SL}_{2}\ast \mathbb{G}_{m}) \wedge (\mathrm{SL}_{2}\ast \mathbb{G}_{m})} & {S^2\wedge\mathrm{SL}_{2}\wedge \mathrm{SL}_{2}} \\
{(S^1\wedge\mathrm{SL}_{2}\wedge \mathrm{SL}_{2})\ast (\mathbb{G}_{m}\times \mathbb{G}_{m})/ C'(\mathbb{G}_{m}\times \mathbb{G}_{m})}
\arrow["{g'}", from=1-1, to=1-2]
\arrow["\delta"', from=1-1, to=2-1]
\arrow["{g''}"{description}, from=2-1, to=1-2]
\end{tikzcd}\] commutes, too. It follows that the diagram \[\begin{tikzcd}
{(\mathrm{SL}_{2}\ast \mathbb{G}_{m}) \wedge (\mathrm{SL}_{2}\ast \mathbb{G}_{m})} & {S^2\wedge\mathrm{SL}_{2}\wedge \mathrm{SL}_{2}} \\
{(S^1\wedge\mathrm{SL}_{2}\wedge \mathrm{SL}_{2})\ast (\mathbb{G}_{m}\times \mathbb{G}_{m})/ C'(\mathbb{G}_{m}\times \mathbb{G}_{m})} & {(S^1\wedge\mathrm{SL}_{2}\wedge \mathrm{SL}_{2})\ast (\mathbb{G}_{m}\times \mathbb{G}_{m})}
\arrow["{g'}", from=1-1, to=1-2]
\arrow["\delta"', from=1-1, to=2-1]
\arrow["{\hat g}"', from=2-2, to=1-2]
\arrow["\sim", from=2-2, to=2-1]
\end{tikzcd}\] is commutative. Altogether we get a commutative diagram\[\begin{tikzcd}
& {S^2\wedge\mathrm{SL}_{2}\wedge \mathrm{SL}_{2}} \\
\\
& {(S^1\wedge\mathrm{SL}_{2}\wedge \mathrm{SL}_{2})\ast (\mathbb{G}_{m}\times \mathbb{G}_{m})} \\
{(\mathrm{SL}_{2}\ast \mathbb{G}_{m}) \wedge (\mathrm{SL}_{2}\ast \mathbb{G}_{m})} & {S^2\wedge\mathrm{SL}_{2}\wedge \mathrm{SL}_{2}\wedge (\mathbb{G}_{m}\times \mathbb{G}_{m})}
\arrow["{{\hat g}}"', from=3-2, to=1-2]
\arrow["{{g'}}"{description}, from=4-1, to=1-2]
\arrow["\gamma", from=4-1, to=4-2]
\arrow["{\tilde{p}^{-1}}"', from=4-2, to=3-2]
\end{tikzcd}\]
From this we can conclude that the morphism $\hat{g}\circ \tilde{p}^{-1}\circ \gamma \circ \gamma'^{-1} \circ \mathrm{id}_{S^2\wedge \mathrm{SL}_{2}\wedge \mathrm{SL}_{2}}\wedge \Delta$ is equal to $g' \circ \gamma'^{-1} \circ \mathrm{id}_{S^2\wedge \mathrm{SL}_{2}\wedge \mathrm{SL}_{2}}\wedge \Delta$. Via the identifications $(\ast)$ and $(\ast \ast)$ on page 20 the morphism $g' \circ \gamma'^{-1} \circ \mathrm{id}_{S^2\wedge \mathrm{SL}_{2}\wedge \mathrm{SL}_{2}}\wedge \Delta$ can be identified with the following composition \[\begin{tikzcd}
	{S^4\wedge\mathbb{G}_{m}^5} & {S^4\wedge\mathbb{G}_{m}^6} & {S^4\wedge\mathbb{G}_{m}^6} & {S^4\wedge\mathbb{G}_{m}^5} & {S^4\wedge\mathbb{G}_{m}^5} \\
	&&&& {S^4\wedge\mathbb{G}_{m}^4} \\
	&&&& {S^4\wedge\mathbb{G}_{m}^4}
	\arrow["{{\Delta_{4+(6)}}}", from=1-1, to=1-2]
	\arrow["{{\textcircled{1}}}", from=1-2, to=1-3]
	\arrow["{{\eta_{4+(5)}}}", from=1-3, to=1-4]
	\arrow["{{\textcircled{2}}}", from=1-4, to=1-5]
	\arrow["{{\eta_{4+(4)}}}", from=1-5, to=2-5]
	\arrow["{{\textcircled{3}}}", from=2-5, to=3-5]
\end{tikzcd}\]
The morphism $\textcircled{1}$ is given by $t_1\wedge t_2\wedge t_3\wedge t_4\wedge x_1\wedge x_2\wedge x_3\wedge x_4\wedge x_5\wedge x_6\mapsto t_2\wedge t_4\wedge t_1\wedge t_3\wedge x_1\wedge x_2\wedge x_5\wedge x_3\wedge x_4\wedge x_6 $. The permutation of $S^4$ given by $t_1\wedge t_2\wedge t_3\wedge t_4\mapsto t_2\wedge t_4\wedge t_1\wedge t_3$ is equal to $-\mathrm{id}_{S^4}$. Together with Corollary~\ref{Corollary 2.3} we see that $\textcircled{1}$ is just $-\mathrm{id}_{4+(6)}$. The morphism $\textcircled{2}$ is defined by $t_1\wedge t_2\wedge t_3\wedge t_4\wedge x_1\wedge x_2\wedge x_3\wedge x_4\wedge x_5\mapsto t_3\wedge t_4\wedge t_1\wedge t_2\wedge  x_3\wedge x_4\wedge x_5\wedge x_1\wedge x_2$. It is just $\mathrm{id}_{4+(5)}$. At the end the morphism $\textcircled{3}$ is given by $t_1\wedge t_2\wedge t_3\wedge t_4\wedge x_1\wedge x_2\wedge x_3\wedge x_4\mapsto t_1\wedge t_3\wedge t_2\wedge t_4\wedge x_3\wedge x_4\wedge x_1\wedge x_2$ which is -$\mathrm{id}_{4+(4)}$. It follows that the morphism $g' \circ \gamma'^{-1} \circ \mathrm{id}_{S^2\wedge \mathrm{SL}_{2}\wedge \mathrm{SL}_{2}}\wedge \Delta$ can be identified with $\eta_{4+(4)}\circ \eta_{4+(5)}\circ \Delta_{4+(6)}$.\\
\end{proof} 

Finally, we can prove Proposition~\ref{Proposition 3.2}. Via the commutative diagram on page 19 we show that it suffies to prove that the composite $\hat{g}\circ \tilde{p}^{-1}\circ \hat{j}^{-1}\circ\hat{j}\circ \mathrm{id}_{S^2\wedge \mathrm{SL}_{2}\wedge \mathrm{SL}_{2}}\wedge\tilde\Delta$ is nullhomotopic. This composite can be written as the sum of the following three morphisms \begin{align*}
	\hat{g}\circ \tilde{p}^{-1}\circ \hat{j}^{-1}\circ i_1\circ \pi_1\circ \mathrm{id}_{S^2\wedge \mathrm{SL}_{2}\wedge \mathrm{SL}_{2}}\wedge\tilde\Delta, \\
	\hat{g}\circ \tilde{p}^{-1}\circ \hat{j}^{-1}\circ i_2\circ \pi_2 \circ\mathrm{id}_{S^2\wedge \mathrm{SL}_{2}\wedge \mathrm{SL}_{2}}\wedge\tilde\Delta, \\
	\hat{g}\circ \tilde{p}^{-1}\circ \hat{j}^{-1}\circ i_3\circ \pi_3\circ \mathrm{id}_{S^2\wedge \mathrm{SL}_{2}\wedge \mathrm{SL}_{2}}\wedge\tilde\Delta \cdot
\end{align*}

Via the identifications  $(\ast)$ and $(\ast \ast)$ on page 20 and by Proposition~\ref{Proposition 3.3} and Proposition~\ref{Proposition 3.4} the sum of these three morphisms can be identified with $2\eta_{4+(4)}+ \eta_{4+(4)}\circ \eta_{4+(5)}\circ \Delta_{4+(6)}$.\\

\begin{cor} The morphism $2\eta_{4+(4)}+ \eta_{4+(4)}\circ \eta_{4+(5)}\circ \Delta_{4+(6)}$ is equal to 0 in $\mathcal{H}_{\bullet}(\mathbb{Z})$.\\
	\end{cor}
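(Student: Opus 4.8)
The plan is to rewrite the morphism as a post-composite with $\eta_{4+(4)}$, replace the inner expression by a hyperbolic map using Lemma~\ref{Lemma 2.7}, and then invoke the suspended unstable relation $\eta\cdot h=0$ from the author's thesis.

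First I would use that $S^4\wedge\mathbb{G}_{m}^5$ is a simplicial suspension, hence a cogroup object in $\mathcal{H}_{\bullet}(\mathbb{Z})$, so that post-composition with a fixed morphism is additive. Writing $2_{4+(5)}=1_{4+(5)}+1_{4+(5)}$ for the degree-$2$ self-map of $S^4\wedge\mathbb{G}_{m}^5$, this gives
\[
2\eta_{4+(4)}+\eta_{4+(4)}\circ\eta_{4+(5)}\circ\Delta_{4+(6)}
=\eta_{4+(4)}\circ\bigl(2_{4+(5)}+\eta_{4+(5)}\circ\Delta_{4+(6)}\bigr).
\]

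Next I would smash the relation $1_{1+(2)}+\eta_{1+(2)}\circ\Delta_{1+(3)}=-\epsilon_{1+(2)}$ of Lemma~\ref{Lemma 2.7} with $S^3\wedge\mathbb{G}_{m}^3$, choosing the spectator factor so that the Tate circle doubled by $\Delta$ lines up with the last one appearing in Proposition~\ref{Proposition 3.4}. Using Lemma~\ref{Lemma 2.5} and Corollaries~\ref{Corollary 2.3} and \ref{Corollary 2.4} to identify the resulting suspensions with the morphisms $\eta_{4+(5)}$, $\Delta_{4+(6)}$ and $\epsilon_{4+(5)}$, I obtain $1_{4+(5)}+\eta_{4+(5)}\circ\Delta_{4+(6)}=-\epsilon_{4+(5)}$. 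Adding $1_{4+(5)}$ to both sides yields $2_{4+(5)}+\eta_{4+(5)}\circ\Delta_{4+(6)}=1_{4+(5)}-\epsilon_{4+(5)}=h_{4+(5)}$, the $S^3\wedge\mathbb{G}_{m}^2$-suspension of the hyperbolic map $h_{1+(3)}$. Hence the morphism in question equals $\eta_{4+(4)}\circ h_{4+(5)}$.

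Finally, $\eta_{4+(4)}\circ h_{4+(5)}$ is exactly the $S^3\wedge\mathbb{G}_{m}^2$-suspension of the composite $\eta_{1+(2)}\circ h_{1+(3)}$, which is $\mathbb{A}^1$-nullhomotopic by \cite[Proposition~3.4.9]{dong2024motivictodabrackets}; since smashing a nullhomotopic morphism with a pointed motivic space again yields a nullhomotopic morphism, we conclude $\eta_{4+(4)}\circ h_{4+(5)}=0$, and therefore $2\eta_{4+(4)}+\eta_{4+(4)}\circ\eta_{4+(5)}\circ\Delta_{4+(6)}=0$ in $\mathcal{H}_{\bullet}(\mathbb{Z})$. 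Together with the discussion preceding the statement this shows that $\eta_{3+(2)}\circ\nu_{3+(3)}$ is $\mathbb{A}^1$-nullhomotopic. The only delicate point is the bookkeeping in the middle step: one has to check that the naive $S^3\wedge\mathbb{G}_{m}^3$-suspension of Lemma~\ref{Lemma 2.7} really reproduces the specific morphisms $\eta_{4+(5)}$ and $\Delta_{4+(6)}$ occurring in Proposition~\ref{Proposition 3.4}, i.e. that the reorderings of smash factors needed to match the Tate coordinates are trivial (or differ by $\epsilon$'s which cancel) in $\mathcal{H}_{\bullet}(\mathbb{Z})$ — which is precisely what the corollaries of Section~2 are meant to supply, but it requires care with signs.
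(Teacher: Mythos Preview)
Your proof is correct and follows essentially the same route as the paper: factor out $\eta_{4+(4)}$, suspend Lemma~\ref{Lemma 2.7} to identify $2_{4+(5)}+\eta_{4+(5)}\circ\Delta_{4+(6)}$ with $h_{4+(5)}$, and then invoke the suspended relation $\eta_{1+(2)}\circ h_{1+(3)}=0$ from \cite[Proposition~3.4.9]{dong2024motivictodabrackets}. Your extra care about the cogroup structure and the coordinate bookkeeping is more explicit than the paper's presentation, but the argument is the same.
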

\begin{proof}
	By Lemma~\ref{Lemma 2.7} we have $1_{1+(2)}+\eta_{1+(2)}\circ \Delta_{1+(3)}=-\epsilon_{1+(2)}$. From this relation we obtain $1_{4+(5)}+\eta_{4+(5)}\circ \Delta_{4+(6)}=-\epsilon_{4+(5)}$. The morphism $2\eta_{4+(4)}+ \eta_{4+(4)}\circ \eta_{4+(5)}\circ \Delta_{4+(6)}$ is also equal to the composite $\eta_{4+(4)}\circ (2_{4+(5)}+\eta_{4+(5)}\circ \Delta_{4+(6)})=\eta_{4+(4)}\circ (1_{4+(5)}-\epsilon_{4+(5)})=\eta_{4+(4)}\circ h_{4+(5)} $. In the author's doctoral thesis \cite[Proposition 3.4.9]{dong2024motivictodabrackets} the relation $\eta_{1+(2)}\circ h_{1+(3)}=0$ is proved. From this relation we get $\eta_{4+(4)}\circ h_{4+(5)}=0$.\\
	\end{proof}

\begin{cor}\label{Corollary 3.6}
The composite $\eta_{3+(2)}\circ \nu_{3+(3)}$ is nullhomotopic.\\	
	\end{cor}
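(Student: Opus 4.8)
The plan is simply to assemble the reductions that have already been carried out in this section. Recall from the discussion opening the section that, using the pointed $\mathbb{A}^1$-weak equivalence $\mathrm{SL}_2\simeq S^1\wedge\mathbb{G}_m^2$ constructed in the proof of Proposition~\ref{Propostion 2.6}, together with Proposition~\ref{Propostion 2.6} itself (which identifies $H(\alpha)$ with $\eta_{2+(2)}$) and the definition of $\nu_{3+(3)}$ via the Hopf construction $H(\xi)$, it is enough to show that the composite
\[
S^2\wedge\mathrm{SL}_2\wedge\mathrm{SL}_2\wedge\mathbb{G}_m \xrightarrow{\mathrm{id}_{S^1}\wedge H(\xi)\wedge\mathrm{id}_{\mathbb{G}_m}} S^2\wedge\mathrm{SL}_2\wedge\mathbb{G}_m \xrightarrow{\mathrm{id}_{S^2}\wedge H(\alpha)} S^2\wedge\mathrm{SL}_2
\]
is $\mathbb{A}^1$-nullhomotopic. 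So the first step is to record this reduction.

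Next I would pass to joins, rewriting the displayed composite through $\tilde\xi$ and $\tilde\alpha$ exactly as in the paragraph preceding Proposition~\ref{Proposition 3.1}, and then invoke Proposition~\ref{Proposition 3.1}: by the commutative diagram proved there, this composite equals $\mathrm{id}_{S^1}\wedge\tilde\xi$ postcomposed with $g\circ f$ (up to the canonical weak equivalences occurring in that diagram), hence it is nullhomotopic as soon as $g\circ f$ is. By the commutative square displayed immediately after Proposition~\ref{Proposition 3.1}, $g\circ f$ agrees, again up to the indicated weak equivalences, with $\hat g\circ(\mathrm{id}\ast\tilde\Delta)$, and Proposition~\ref{Proposition 3.2} asserts precisely that $\hat g\circ(\mathrm{id}\ast\tilde\Delta)$ is $\mathbb{A}^1$-nullhomotopic. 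Chaining these identifications together yields the statement. (Recall also that Proposition~\ref{Proposition 3.2} is itself obtained by splitting the relevant composite, via the stable splitting of $S^1\wedge(\mathbb{G}_m\times\mathbb{G}_m)$, into three summands which by Propositions~\ref{Proposition 3.3} and~\ref{Proposition 3.4} sum to $2\eta_{4+(4)}+\eta_{4+(4)}\circ\eta_{4+(5)}\circ\Delta_{4+(6)}$; Lemma~\ref{Lemma 2.7} rewrites this as $\eta_{4+(4)}\circ h_{4+(5)}$, which vanishes by \cite[Proposition 3.4.9]{dong2024motivictodabrackets}.)

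The main obstacle here is not conceptual but purely a matter of bookkeeping: one must keep careful track of the simplicial circles, the Tate circles and the coordinate transpositions through the whole chain of weak equivalences and suspensions, so that the bidegrees $3+(2)$ and $3+(3)$ appearing in the statement line up correctly with the bidegrees $4+(4)$, $4+(5)$ and $4+(6)$ that show up in Propositions~\ref{Proposition 3.3} and~\ref{Proposition 3.4} and in the corollary preceding this one. All of the genuinely geometric input has already been isolated in Propositions~\ref{Propostion 2.6} and~\ref{Proposition 3.1}--\ref{Proposition 3.4}, so once the indexing is matched up the remaining assembly is formal.
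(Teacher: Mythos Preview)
Your proposal is correct and follows essentially the same route as the paper's proof: the paper too invokes Proposition~\ref{Proposition 3.1} and Proposition~\ref{Proposition 3.2} to conclude that the displayed $\mathrm{SL}_2$-composite is nullhomotopic, and then identifies this composite with $\eta_{3+(2)}\circ\nu_{3+(3)}$ via the isomorphism $\lambda$ and the identifications $(\ast)$, $(\ast\ast)$. Your write-up spells out the intermediate reductions (the join rewriting, the diagram after Proposition~\ref{Proposition 3.1}) more explicitly than the paper does, but there is no substantive difference.
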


\begin{proof}
By Proposition~\ref{Proposition 3.1} and Proposition~\ref{Proposition 3.2} we see that the composite \[\begin{tikzcd}
	{S^1\wedge(\mathrm{SL}_{2}\ast \mathrm{SL}_{2})\wedge\mathbb{G}_{m}} \\
	\\
	{S^2\wedge\mathrm{SL}_{2}\wedge \mathbb{G}_{m}} \\
	{S^1\wedge(\mathrm{SL}_{2}\ast \mathbb{G}_{m})} & {S^2\wedge\mathrm{SL}_{2}}
	\arrow["{{\mathrm{id}_{S^1}\wedge \tilde\xi \wedge \mathrm{id}_{\mathbb{G}_{m}}}}"', from=1-1, to=3-1]
	\arrow["\sim"', from=3-1, to=4-1]
	\arrow["{{\mathrm{id}_{S^1}\wedge \tilde\alpha }}"', from=4-1, to=4-2]
\end{tikzcd}\] is nullhomotopic. Via the isomorphism $\lambda$ on page 14 and the isomorphisms $(\ast)$ and $(\ast \ast)$ on page 20 this composite can be identified with  $\eta_{3+(2)}\circ \nu_{3+(3)}$.\\
\end{proof}

\begin{prop}\label{Proposition 3.7} We have $\eta_{3+(3)}\circ \nu_{3+(4)}= \nu_{3+(3)}\circ \eta_{4+(5)}$. It follows that $\nu_{3+(3)}\circ \eta_{4+(5)}$ is nullhomotopic.\\
\end{prop}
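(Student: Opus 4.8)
The plan is to separate the assertion into the equality $\eta_{3+(3)}\circ\nu_{3+(4)}=\nu_{3+(3)}\circ\eta_{4+(5)}$ and the ensuing vanishing, and to dispatch the latter first, since it is formal. Up to the canonical identifications, $\nu_{3+(4)}$ is the $\mathbb{G}_{m}$-suspension of $\nu_{3+(3)}$ and $\eta_{3+(3)}$ is the $\mathbb{G}_{m}$-suspension of $\eta_{3+(2)}$, so $\eta_{3+(3)}\circ\nu_{3+(4)}$ is the $\mathbb{G}_{m}$-suspension of $\eta_{3+(2)}\circ\nu_{3+(3)}$, hence $\mathbb{A}^{1}$-nullhomotopic by Corollary~\ref{Corollary 3.6}. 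Once the equality is available, $\nu_{3+(3)}\circ\eta_{4+(5)}$ is therefore $\mathbb{A}^{1}$-nullhomotopic as well.

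For the equality I would argue geometrically, in the style of Propositions~\ref{Proposition 3.1}--\ref{Proposition 3.4}. By Proposition~\ref{Propostion 2.6}, $\eta_{2+(2)}$ is the Hopf construction $H(\alpha)$ on the conjugation action $\alpha\colon\mathrm{SL}_{2}\times\mathbb{G}_{m}\to\mathrm{SL}_{2}$, $(A,t)\mapsto g_{t}Ag_{t}^{-1}$ with $g_{t}=\mathrm{diag}(t,1)$, while $\nu_{2+(2)}=H(\xi)$ for the multiplication $\xi$. After the reindexings $\lambda$ (p.~14) and $(\ast)$, $(\ast\ast)$ (p.~20) both sides become composites of suspensions of $\tilde\xi$ and $\tilde\alpha$, and I would build, from the join-associativity $(\mathrm{SL}_{2}\ast\mathrm{SL}_{2})\ast\mathbb{G}_{m}\simeq\mathrm{SL}_{2}\ast(\mathrm{SL}_{2}\ast\mathbb{G}_{m})$ and the symmetry of the join — with all $S^{1}$- and $\mathbb{G}_{m}$-reshuffles controlled by Corollaries~\ref{Corollary 2.3} and~\ref{Corollary 2.4} as in the computations of $\textcircled{1}$, $\textcircled{2}$, $\textcircled{3}$ in Proposition~\ref{Proposition 3.4} — a web of commutative squares identifying \emph{both} composites with the morphism induced on the triple join by the single correspondence $(\overline{(s,A,B)},t)\mapsto\overline{(s,g_{t}Ag_{t}^{-1},g_{t}Bg_{t}^{-1})}$. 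The key algebraic input is that $\alpha(-,t)$ is an automorphism of $(\mathrm{SL}_{2},\xi)$, i.e.\ $\alpha(\xi(A,B),t)=\xi(\alpha(A,t),\alpha(B,t))$: for $\eta_{3+(3)}\circ\nu_{3+(4)}$ this correspondence already appears (after one $\mathbb{G}_{m}$-suspension) in the proof of Corollary~\ref{Corollary 3.6}, whereas for $\nu_{3+(3)}\circ\eta_{4+(5)}$, realising $\eta_{4+(5)}$ through $H(\alpha)$ applied to the first $\mathrm{SL}_{2}$-factor produces instead $(A,t,B)\mapsto\xi(g_{t}Ag_{t}^{-1},B)$, which one must rewrite into the symmetric form by conjugating the second factor as well.

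The step I expect to be the genuine obstacle is exactly this rewriting: the maps $(A,t,B)\mapsto\xi(g_{t}Ag_{t}^{-1},B)$ and $(A,t,B)\mapsto\xi(g_{t}Ag_{t}^{-1},g_{t}Bg_{t}^{-1})$ are not homotopic as maps of motivic spaces — they already disagree on $\{I\}\times\mathbb{G}_{m}\times\mathrm{SL}_{2}$ — so the identification has to be carried out only after passing to the join and smashing with the ambient simplicial circles, using functoriality of the Hopf construction in its source together with the geometric realisation techniques from the proof of Proposition~\ref{Propostion 2.6} and the suspension identities of Corollaries~\ref{Corollary 2.3}--\ref{Corollary 2.4}. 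Should this direct comparison turn out too cumbersome, an alternative is to observe that the bidegree of $\nu_{3+(3)}\circ\eta_{4+(5)}$ lies in the range where $\mathbb{P}^{1}$-stabilisation to $\mathcal{SH}(\mathbb{Z})$ is injective, so the identity follows from the stable relation $\eta\nu=\nu\eta$ of Dugger and Isaksen, and then the first paragraph again yields the vanishing.
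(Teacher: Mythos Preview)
Your first paragraph is fine and matches the paper: the vanishing of $\nu_{3+(3)}\circ\eta_{4+(5)}$ is immediate from Corollary~\ref{Corollary 3.6} once the equality is known.

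For the equality, however, you are working much too hard, and neither of your two routes is complete. The paper's argument is a pure coordinate shuffle and makes no use of the automorphism property of $\alpha(-,t)$, of join associativity, or of any realisation functor. The point is simply that a suspension of $\eta_{1+(1)}$ can be moved from the ``left'' smash factors to the ``right'' ones at the cost of permutations of the $S^{1}$- and $\mathbb{G}_{m}$-coordinates, and by Corollary~\ref{Corollary 2.3} all such permutations that occur are $\pm\mathrm{id}$. Concretely, one chooses explicit permutations $\phi,\psi$ on $S^{4}\wedge\mathbb{G}_{m}^{6}$ and $\phi',\psi'$ on $S^{4}\wedge\mathbb{G}_{m}^{5}$, each equal to $\pm\mathrm{id}$, so that $\phi'\circ\psi'\circ\eta_{4+(5)}\circ\psi\circ\phi$ is literally $\mathrm{id}_{S^{3}\wedge\mathbb{G}_{m}^{4}}\wedge\eta_{1+(2)}$ (up to the sandwiching isomorphisms $\textcircled{1}$, $\textcircled{2}$). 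Since $\nu_{3+(3)}$ is, after the reindexing $\lambda$ and $(\ast)$, of the form $\mathrm{id}_{S^{1}}\wedge H(\xi)\wedge\mathrm{id}_{\mathbb{G}_{m}}$ acting on the first block, it commutes past the right-hand $\eta$ block on the nose, which is exactly $\eta_{3+(3)}\circ\nu_{3+(4)}$. No geometry of $\mathrm{SL}_{2}$ beyond the definition of $\nu$ enters.

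Your geometric plan runs into the obstacle you yourself flag --- the two unpointed maps $(A,t,B)\mapsto\xi(g_{t}Ag_{t}^{-1},B)$ and $(A,t,B)\mapsto\xi(g_{t}Ag_{t}^{-1},g_{t}Bg_{t}^{-1})$ are genuinely different, and you give no mechanism for identifying them after passing to joins; this would need an actual argument, not a hope. Your fallback, invoking injectivity of $\mathbb{P}^{1}$-stabilisation in this bidegree, is asserted but not justified: motivic Freudenthal-type statements over $\mathrm{Spec}\,\mathbb{Z}$ in this range are not available off the shelf, and the whole thrust of the paper is precisely to avoid relying on stable input. So as written the proposal has a gap at the crucial step; the fix is to abandon the $H(\alpha)$ detour and argue by direct coordinate permutation as above.
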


\begin{proof}
We first look at $\eta_{4+(5)}$. It is a morphism from $S^4\wedge \mathbb{G}_{m}^6$ to $S^4\wedge \mathbb{G}_{m}^5$. Let $\phi:S^4\wedge \mathbb{G}_{m}^6\rightarrow S^4\wedge \mathbb{G}_{m}^6 $ be the isomorphism given by $t_1\wedge t_2\wedge t_3\wedge t_4\wedge x_1\wedge x_2\wedge x_3\wedge x_4\wedge x_5\wedge x_6\mapsto t_1\wedge t_2\wedge t_3\wedge t_4\wedge x_5\wedge x_6\wedge x_3\wedge x_4\wedge x_1\wedge x_2$. By Corollary~\ref{Corollary 2.3} the isomorphism $\phi$ is equal to $\mathrm{id}_{S^4\wedge \mathbb{G}_{m}^6}$. Then let $\psi: S^4\wedge \mathbb{G}_{m}^6\rightarrow S^4\wedge \mathbb{G}_{m}^6 $ be the isomorphism defined by $t_1\wedge t_2\wedge t_3\wedge t_4\wedge x_1\wedge x_2\wedge x_3\wedge x_4\wedge x_5\wedge x_6\mapsto t_2\wedge t_3\wedge t_4\wedge t_1\wedge x_1\wedge x_2\wedge x_3\wedge x_4\wedge x_5\wedge x_6$. In particular, $\psi$ is $-\mathrm{id}_{S^4\wedge \mathbb{G}_{m}^6}$. Similarly we define $\psi': S^4\wedge \mathbb{G}_{m}^5\rightarrow S^4\wedge \mathbb{G}_{m}^5 $ by $t_1\wedge t_2\wedge t_3\wedge t_4\wedge x_1\wedge x_2\wedge x_3\wedge x_4\wedge x_5\mapsto t_4\wedge t_1\wedge t_2\wedge t_3\wedge x_1\wedge x_2\wedge x_3\wedge x_4\wedge x_5$. It is equal to $-\mathrm{id}_{S^4\wedge \mathbb{G}_{m}^5}$. Moreover there is also a morphism $\phi':S^4\wedge \mathbb{G}_{m}^5\rightarrow S^4\wedge \mathbb{G}_{m}^5$ given by $t_1\wedge t_2\wedge t_3\wedge t_4\wedge x_1\wedge x_2\wedge x_3\wedge x_4\wedge x_5\mapsto t_1\wedge t_2\wedge t_3\wedge t_4\wedge x_2\wedge x_3\wedge x_4\wedge x_5\wedge x_1$. It is equal to $\mathrm{id}_{S^4\wedge \mathbb{G}_{m}^5}$. Therefore we get $\eta_{4+(5)}= \phi'\circ \psi'\circ \eta_{4+(5)}\circ \psi \circ \phi $.\\

The composite $\phi'\circ \psi'\circ \eta_{4+(5)}\circ \psi \circ \phi$ can also be written as the following  composite: \[\begin{tikzcd}
	{S^4\wedge \mathbb{G}_{m}^6} \\
	{S^3\wedge \mathbb{G}_{m}^4\wedge S^1\wedge\mathbb{G}_{m}^2} \\
	{S^3\wedge \mathbb{G}_{m}^4\wedge S^1\wedge\mathbb{G}_{m}} \\
	{S^4\wedge \mathbb{G}_{m}^5}
	\arrow["{\textcircled{1}}", from=1-1, to=2-1]
	\arrow["{\mathrm{id}_{S^3\wedge \mathbb{G}_{m}^4}\wedge\eta_{1+(2)}}", from=2-1, to=3-1]
	\arrow["{\textcircled{2}}", from=3-1, to=4-1]
\end{tikzcd}\]
where $\textcircled{1}$ is given by $t_1\wedge t_2\wedge t_3\wedge t_4\wedge x_1\wedge x_2\wedge x_3\wedge x_4\wedge x_5\wedge x_6\mapsto t_2\wedge t_3\wedge t_4\wedge x_1\wedge x_2\wedge x_3\wedge x_4\wedge t_1\wedge x_5\wedge x_6$ and $\textcircled{2}$ is defined by $t_1\wedge t_2\wedge t_3\wedge x_1\wedge x_2\wedge x_3\wedge x_4\wedge t_4\wedge x_5\mapsto t_4\wedge t_1\wedge t_2\wedge t_4\wedge x_1\wedge x_2\wedge x_3\wedge x_4\wedge x_5$.\\

Next we look at the composite \[\begin{tikzcd}
	{S^4\wedge \mathbb{G}_{m}^6} \\
	{S^3\wedge \mathbb{G}_{m}^4\wedge S^1\wedge\mathbb{G}_{m}^2} \\
	{S^3\wedge \mathbb{G}_{m}^4\wedge S^1\wedge\mathbb{G}_{m}} \\
	{S^4\wedge \mathbb{G}_{m}^5} \\
	{S^3\wedge \mathbb{G}_{m}^3} & \cdot
	\arrow["{\textcircled{1}}", from=1-1, to=2-1]
	\arrow["{\mathrm{id}_{S^3\wedge \mathbb{G}_{m}^4}\wedge\eta_{1+(2)}}", from=2-1, to=3-1]
	\arrow["{\textcircled{2}}", from=3-1, to=4-1]
	\arrow["{\nu_{3+(3)}}", from=4-1, to=5-1]
\end{tikzcd}\]
Recall that $\nu_{3+(3)}$ is the composite as follows: \[\begin{tikzcd}
	{S^4\wedge \mathbb{G}_{m}^5} \\
	{S^2\wedge S^1\wedge \mathbb{G}_{m}^2\wedge S^1\wedge \mathbb{G}_{m}^2\wedge \mathbb{G}_{m}} \\
	{S^2\wedge \mathrm{SL}_{2}\wedge \mathrm{SL}_{2}\wedge \mathbb{G}_{m}} \\
	{S^2\wedge \mathrm{SL}_{2}\wedge \mathbb{G}_{m}} \\
	{S^3\wedge \mathbb{G}_{m}^3}
	\arrow["{\mathrm{id}_{S^1}\wedge \lambda\wedge\mathrm{id}_{\mathbb{G}_{m}}}", from=1-1, to=2-1]
	\arrow["\sim", from=2-1, to=3-1]
	\arrow["{\mathrm{id}_{S^1}\wedge H(\xi)\wedge\mathrm{id}_{\mathbb{G}_{m}}}", from=3-1, to=4-1]
	\arrow["\sim", from=4-1, to=5-1]
\end{tikzcd}\] where $\lambda$ is defined on page 14. It follows from the definition of $\nu_{3+(3)}$ that we can exchange the order of $\nu$ and $\eta$  in \[\begin{tikzcd}
{S^4\wedge \mathbb{G}_{m}^6} \\
{S^3\wedge \mathbb{G}_{m}^4\wedge S^1\wedge\mathbb{G}_{m}^2} \\
{S^3\wedge \mathbb{G}_{m}^4\wedge S^1\wedge\mathbb{G}_{m}} \\
{S^4\wedge \mathbb{G}_{m}^5} \\
{S^3\wedge \mathbb{G}_{m}^3} & \cdot
\arrow["{\textcircled{1}}", from=1-1, to=2-1]
\arrow["{\mathrm{id}_{S^3\wedge \mathbb{G}_{m}^4}\wedge\eta_{1+(2)}}", from=2-1, to=3-1]
\arrow["{\textcircled{2}}", from=3-1, to=4-1]
\arrow["{\nu_{3+(3)}}", from=4-1, to=5-1]
\end{tikzcd}\] Thus we obtain $\eta_{3+(3)}\circ \nu_{3+(4)}= \nu_{3+(3)}\circ \eta_{4+(5)}$.\\
\end{proof}

\begin{cor}
The relation $\nu_{3+(3)}=-\epsilon_{3+(3)}\circ \nu_{3+(3)}$ holds.\\
\end{cor}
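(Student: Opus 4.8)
The plan is to reduce the claim to the unstable relation $1_{1+(2)}+\eta_{1+(2)}\circ\Delta_{1+(3)}=-\epsilon_{1+(2)}$ of Lemma~\ref{Lemma 2.7} together with the vanishing of $\eta_{3+(2)}\circ\nu_{3+(3)}$ established in Corollary~\ref{Corollary 3.6}. First I would suspend Lemma~\ref{Lemma 2.7} (exactly as is done just before Corollary~\ref{Corollary 3.6}) to obtain the identity $1_{3+(3)}+\eta_{3+(3)}\circ\Delta_{3+(4)}=-\epsilon_{3+(3)}$ in $\mathcal{H}_{\bullet}(\mathbb{Z})$. Composing on the right with $\nu_{3+(3)}$ yields
\[
-\epsilon_{3+(3)}\circ\nu_{3+(3)}=\nu_{3+(3)}+\eta_{3+(3)}\circ\Delta_{3+(4)}\circ\nu_{3+(3)},
\]
so it suffices to show that the composite $\eta_{3+(3)}\circ\Delta_{3+(4)}\circ\nu_{3+(3)}$ is $\mathbb{A}^1$-nullhomotopic.

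The key step is to commute the diagonal past $\nu_{3+(3)}$. Recall from the proof of Proposition~\ref{Proposition 3.7} that, up to the shuffle isomorphisms which are all $\pm\mathrm{id}$ by Corollary~\ref{Corollary 2.3}, the morphism $\nu_{3+(3)}$ is the smash product of $\nu_{2+(2)}$ with the identity on a trailing copy of $\mathbb{G}_{m}$, and that this trailing factor is precisely the one on which $\Delta_{3+(4)}$ acts. Since smashing is functorial, this gives $\Delta_{3+(4)}\circ\nu_{3+(3)}=\nu_{3+(4)}\circ\Delta_{4+(6)}$, where $\nu_{3+(4)}$ denotes the $\mathbb{G}_{m}$-suspension of $\nu_{3+(3)}$. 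Therefore
\[
\eta_{3+(3)}\circ\Delta_{3+(4)}\circ\nu_{3+(3)}=\eta_{3+(3)}\circ\nu_{3+(4)}\circ\Delta_{4+(6)}.
\]
Now $\eta_{3+(3)}\circ\nu_{3+(4)}$ is nothing but the $\mathbb{G}_{m}$-suspension of $\eta_{3+(2)}\circ\nu_{3+(3)}$, which is $\mathbb{A}^1$-nullhomotopic by Corollary~\ref{Corollary 3.6}; hence $\eta_{3+(3)}\circ\nu_{3+(4)}=0$ and the composite above vanishes. Plugging this into the previous display gives $-\epsilon_{3+(3)}\circ\nu_{3+(3)}=\nu_{3+(3)}$, as claimed.

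I expect the main obstacle to be the verification of the commutation $\Delta_{3+(4)}\circ\nu_{3+(3)}=\nu_{3+(4)}\circ\Delta_{4+(6)}$: one must unwind the explicit description of $\nu_{3+(3)}$ used in Proposition~\ref{Proposition 3.7}, check that the trailing $\mathbb{G}_{m}$-coordinate is genuinely left untouched by $\lambda$, by $H(\xi)$ and by all the intervening weak equivalences, and confirm that the shuffle permutations occurring on the two sides agree so that no sign is introduced. This is routine but has to be carried out with the same care as the permutation bookkeeping in the proofs of Proposition~\ref{Proposition 3.4} and Proposition~\ref{Proposition 3.7}; everything else is formal manipulation in the pointed motivic homotopy category.
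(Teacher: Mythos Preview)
Your proposal is correct and follows essentially the same route as the paper: suspend Lemma~\ref{Lemma 2.7} to $1_{3+(3)}+\eta_{3+(3)}\circ\Delta_{3+(4)}=-\epsilon_{3+(3)}$, precompose with $\nu_{3+(3)}$, commute the diagonal past $\nu$ using that it acts on a trailing $\mathbb{G}_m$-factor (the paper does this explicitly via permutations $\omega,\omega'$ shown to be identities by Corollary~\ref{Corollary 2.3}), and then kill the remaining term with Corollary~\ref{Corollary 3.6}. The only cosmetic difference is that the paper writes $\mathrm{id}_{S^4\wedge\mathbb{G}_m^4}\wedge\Delta_{(2)}$ where you write $\Delta_{4+(6)}$, and the paper spells out the shuffle check directly rather than pointing to Proposition~\ref{Proposition 3.7}.
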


\begin{proof}
By Lemma 2.7 we have $1_{1+(2)}+\eta_{1+(2)}\circ \Delta_{1+(3)}=-\epsilon_{1+(2)}$. From this we obtain $1_{3+(3)}+\eta_{3+(3)}\circ \Delta_{3+(4)}=-\epsilon_{3+(3)}$. The suspended diagonal morphism $\Delta_{3+(4)}$ is a morphism from $S^3\wedge \mathbb{G}_{m}^3$ to $S^3\wedge \mathbb{G}_{m}^4$. Let $\omega: S^3\wedge \mathbb{G}_{m}^3\rightarrow S^3\wedge \mathbb{G}_{m}^3$ be the isomorphism given by $t_1\wedge t_2\wedge t_3\wedge x_1\wedge x_2\wedge x_3\mapsto  t_1\wedge t_2\wedge t_3\wedge x_3\wedge x_1\wedge x_2$. Again by Corollary~\ref{Corollary 2.3} it is equal to $\mathrm{id}_{S^3\wedge \mathbb{G}_{m}^3}$. Let $\omega': S^3\wedge \mathbb{G}_{m}^4\rightarrow S^3\wedge \mathbb{G}_{m}^4$ be the isomorphism given by $t_1\wedge t_2\wedge t_3\wedge x_1\wedge x_2\wedge x_3\wedge x_4\mapsto  t_1\wedge t_2\wedge t_3\wedge x_3\wedge x_4\wedge x_1\wedge x_2$. It follows also from Corollary~\ref{Corollary 2.3} that $\omega'$ equals to $\mathrm{id}_{S^3\wedge \mathbb{G}_{m}^4}$.\\

Now we see that $\Delta_{3+(4)}$ is equal to $\omega'\circ \Delta_{3+(4)} \circ \omega$. The composite $\omega'\circ \Delta_{3+(4)} \circ \omega$ is just $\mathrm{id}_{S^3\wedge \mathbb{G}_{m}^2}\wedge \Delta_{(2)}$. Therefore we get $\Delta_{3+(4)}\circ \nu_{3+(3)}= \nu_{3+(4)}\circ (\mathrm{id}_{S^4\wedge \mathbb{G}_{m}^4}\wedge \Delta_{(2)})$. By Corollary~\ref{Corollary 3.6} the relation $\eta_{3+(3)}\circ \Delta_{3+(4)}=0$ holds. Thus we obtain from $1_{3+(3)}+\eta_{3+(3)}\circ \Delta_{3+(4)}=-\epsilon_{3+(3)}$ the relation $\nu_{3+(3)}=-\epsilon_{3+(3)}\circ \nu_{3+(3)}$.\\
\end{proof}

At the end we prove the relation $\nu_{3+(2)}\circ \nu_{4+(4)}=-\nu_{3+(2)}\circ \nu_{4+(4)}$.\\

\begin{prop}\label{Proposition 3.9}
	The relation $\nu_{3+(2)}\circ \nu_{4+(4)}=-\nu_{3+(2)}\circ \nu_{4+(4)}$ holds.\\
\end{prop}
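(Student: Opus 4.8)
This is the unstable incarnation of the stable relation $\nu^2=-\nu^2$, and the plan is to reproduce unstably the derivation sketched in the introduction, where stably $\nu^2=-\nu^2$ is extracted from $\epsilon\nu=-\nu$, $1+\eta\rho=-\epsilon$ and $\nu\eta=0$. Their unstable counterparts are already in hand: the corollary just proved gives $\nu_{3+(3)}=-\epsilon_{3+(3)}\circ\nu_{3+(3)}$, Lemma~\ref{Lemma 2.7} gives $1_{1+(2)}+\eta_{1+(2)}\circ\Delta_{1+(3)}=-\epsilon_{1+(2)}$, and Proposition~\ref{Proposition 3.7} together with Corollary~\ref{Corollary 3.6} gives the vanishing of the relevant composites $\nu\circ\eta$ and $\eta\circ\nu$. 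First I would suspend all three to the bidegrees occurring in $\nu_{3+(2)}\circ\nu_{4+(4)}$, using Corollaries~\ref{Corollary 2.3} and \ref{Corollary 2.4} to move the Tate coordinates into the positions dictated by the definition of $\nu$ (through the reorganizing isomorphisms $\lambda$, $(\ast)$ and $(\ast\ast)$), and desuspending the vanishing statements of Proposition~\ref{Proposition 3.7} and Corollary~\ref{Corollary 3.6} into the stable range (where $\mathbb{G}_{m}$-suspension is injective on the relevant hom-groups), or else re-running the arguments of Propositions~\ref{Proposition 3.1}--\ref{Proposition 3.4} one weight lower.

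The geometric mechanism I would isolate is the following. In $\nu_{3+(2)}\circ\nu_{4+(4)}$ the inner factor $\nu_{4+(4)}$ is built from the Hopf construction $H(\xi)$ on the multiplication $\xi\colon\mathrm{SL}_{2}\times\mathrm{SL}_{2}\to\mathrm{SL}_{2}$. I would precompose the whole composite with the coordinate permutation $T$ of the source of $\nu_{4+(4)}$ that transposes the two $\mathrm{SL}_{2}$-blocks fed into $H(\xi)$. On the one hand, after separating simplicial and Tate circles and identifying $\mathrm{SL}_{2}\simeq S^1\wedge\mathbb{G}_{m}^2$, the permutation $T$ is a single transposition of simplicial circles, contributing $-1$, composed with an even number of transpositions of $\mathbb{G}_{m}$-factors, each of which is $\epsilon_{(1)}$ on one factor by Corollaries~\ref{Corollary 2.3} and \ref{Corollary 2.4} and which therefore contribute the identity; hence $T=-\mathrm{id}$ in $\mathcal{H}_{\bullet}(\mathbb{Z})$. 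On the other hand, $H(\xi)\circ T$ is $H(\xi^{\mathrm{op}})$ up to the sign coming from the reparametrisation of the join, and since $\mathrm{SL}_{2}$ is a group one has $\xi^{\mathrm{op}}=\mathrm{inv}\circ\xi\circ(\mathrm{inv}\times\mathrm{inv})$, so $H(\xi^{\mathrm{op}})$ differs from $H(\xi)$ only by pre- and post-composition with suspensions of the inversion automorphism $\mathrm{inv}$ of $\mathrm{SL}_{2}$.

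Thus the remaining task — and the main obstacle — is to show that replacing $H(\xi)$ by $H(\xi^{\mathrm{op}})$ inside $\nu_{3+(2)}\circ\nu_{4+(4)}$ leaves the composite unchanged, i.e.\ that $\nu_{3+(2)}\circ\nu_{4+(4)}\circ T=\nu_{3+(2)}\circ\nu_{4+(4)}$. Under $\mathrm{SL}_{2}\simeq S^1\wedge\mathbb{G}_{m}^2$ the inversion $\mathrm{inv}$ is identified, via Lemma~\ref{lemma 2.1} and Corollaries~\ref{Corollary 2.3}--\ref{Corollary 2.4}, with an explicit endomorphism of the form $\pm(\text{product of }\epsilon_{(1)}\text{'s})$; substituting $-\epsilon_{1+(2)}=1_{1+(2)}+\eta_{1+(2)}\circ\Delta_{1+(3)}$ (the suspended Lemma~\ref{Lemma 2.7}) for each occurring $\epsilon$ expresses the discrepancy between $\nu_{3+(2)}\circ\nu_{4+(4)}\circ T$ and $\nu_{3+(2)}\circ\nu_{4+(4)}$ as a sum of terms, each factoring through a composite of the shape $\nu\circ\eta$ or $\eta\circ\nu$, all of which are null by Proposition~\ref{Proposition 3.7} and Corollary~\ref{Corollary 3.6}, while the remaining sign-free terms cancel. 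Combining this with $T=-\mathrm{id}$ then gives $\nu_{3+(2)}\circ\nu_{4+(4)}=\nu_{3+(2)}\circ\nu_{4+(4)}\circ T=-\,\nu_{3+(2)}\circ\nu_{4+(4)}$. I expect the bookkeeping of the reorganizing isomorphisms — ensuring that each $\epsilon_{(1)}$ and each simplicial sign lands on the intended coordinate, so that the error terms are genuinely of $\nu\eta$/$\eta\nu$ type rather than something that survives, and that the sign produced by $T$ and the sign from the join reparametrisation combine so as not to cancel the effect — to be the only delicate point, Corollaries~\ref{lemma 2.1}--\ref{Corollary 2.4} being precisely the tools for carrying it out.
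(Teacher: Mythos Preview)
Your plan diverges substantially from the paper's argument, and it contains a genuine gap. The paper never touches the inversion on $\mathrm{SL}_2$; instead it uses the \emph{associativity} of $\xi$. After inserting a harmless cyclic permutation $\Omega$ of simplicial circles, the composite $\nu_{3+(2)}\circ\nu_{4+(4)}$ is expressed as a map out of $S^1\wedge S^1\wedge\mathrm{SL}_2\wedge\mathrm{SL}_2\wedge\mathrm{SL}_2$ that, via the iterated Hopf construction $(\mathrm{id}_{S^1}\wedge H(\xi))\circ(\mathrm{id}_{S^1}\wedge H(\xi)\wedge\mathrm{id}_{\mathrm{SL}_2})$, computes $(AB)C$. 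A join-level analysis (using the geometric realization functor, exactly as in the proof of Proposition~\ref{Propostion 2.6}) shows that this agrees with the composite computing $A(BC)$; re-expressing the latter in terms of $\nu$ forces the simplicial and $\mathbb{G}_m$-coordinates through a permutation that evaluates to $-\mathrm{id}$ by Corollary~\ref{Corollary 2.3}, and this is the desired sign.

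The gap in your route is the assertion that $\mathrm{inv}\colon\mathrm{SL}_2\to\mathrm{SL}_2$ corresponds, under $\mathrm{SL}_2\simeq S^1\wedge\mathbb{G}_m^2$, to ``$\pm(\text{product of }\epsilon_{(1)}\text{'s})$'' via Lemma~\ref{lemma 2.1} and Corollaries~\ref{Corollary 2.3}--\ref{Corollary 2.4}. Those results compute only coordinate \emph{permutations} of motivic spheres; they say nothing about the group inversion, which does not even descend along the last-column projection $\mathrm{SL}_2\to\mathbb{A}^2\setminus 0$. If one instead computes $\Sigma\mathrm{inv}$ from the H-group identity $\xi\circ(\mathrm{id}\times\mathrm{inv})\circ\tilde\Delta\simeq *$ using the splitting of $\Sigma(\mathrm{SL}_2\times\mathrm{SL}_2)$ (analogously to the proof of Lemma~\ref{Lemma 2.7}), one finds
\[
\Sigma\mathrm{inv}\;=\;-\mathrm{id}\;-\;H(\xi)\circ\Sigma(\mathrm{id}\wedge\mathrm{inv})\circ\Sigma\bar\Delta,
\]
where $\bar\Delta$ is the reduced diagonal of $\mathrm{SL}_2$. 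The correction term carries $H(\xi)$---that is, $\nu$ itself---rather than $\eta$. Substituting this into your expression for $\nu_{3+(2)}\circ\nu_{4+(4)}\circ T$ therefore produces error terms of the shape $\nu\circ\nu\circ(\text{reduced diagonal})$, not of the shape $\nu\eta$ or $\eta\nu$; these are not killed by Corollary~\ref{Corollary 3.6} or Proposition~\ref{Proposition 3.7}, and the argument becomes circular. To make your strategy work you would need an independent identification of $\Sigma\mathrm{inv}$ (or a vanishing result for $\Sigma\bar\Delta$ on $\Sigma\mathrm{SL}_2$), neither of which is available from the results in the paper.
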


\begin{proof}
Let $\Omega: S^4\wedge \mathbb{G}_{m}^4\rightarrow S^4\wedge \mathbb{G}_{m}^4$ be the isomorphism given by $t_1\wedge t_2\wedge t_3\wedge t_4\wedge  x_1\wedge x_2\wedge x_3\wedge x_4\wedge \mapsto  t_1\wedge t_3\wedge t_4\wedge t_2\wedge x_1\wedge x_2\wedge x_3\wedge x_4$. In $\mathcal{H}_{\bullet}(\mathbb{Z})$ it is equal to $\mathrm{id}_{S^4\wedge \mathbb{G}_{m}^4}$. Thus we get $\nu_{3+(2)}\circ \nu_{4+(4)}=\nu_{3+(2)}\circ\Omega\circ  \nu_{4+(4)}$. The composite $\nu_{3+(2)}\circ\Omega\circ  \nu_{4+(4)}$ can be written as
\[\begin{tikzcd}
	{S^5\wedge \mathbb{G}_{m}^6} \\
	{S^2\wedge S^1\wedge \mathbb{G}_{m}^2\wedge S^1\wedge \mathbb{G}_{m}^2\wedge S^1\wedge \mathbb{G}_{m}^2} \\
	{S^1\wedge S^1\wedge\mathrm{SL}_2\wedge \mathrm{SL}_2\wedge \mathrm{SL}_2} \\
	{S^1\wedge S^1\wedge\mathrm{SL}_2\wedge \mathrm{SL}_2} \\
	{S^1\wedge S^1\wedge\mathrm{SL}_2} \\
	{S^3\wedge \mathbb{G}_{m}^2}
	\arrow["{{\textcircled{1}}}", from=1-1, to=2-1]
	\arrow["\sim", from=2-1, to=3-1]
	\arrow["{{\mathrm{id}_{S^1}\wedge H(\xi)\wedge \mathrm{id}_{\mathrm{SL}_2} }}", from=3-1, to=4-1]
	\arrow["{{\mathrm{id}_{S^1}\wedge H(\xi) }}", from=4-1, to=5-1]
	\arrow["\sim", from=5-1, to=6-1]
\end{tikzcd}\] where the isomorphism $\textcircled{1}$ is given by $t_1\wedge t_2\wedge t_3\wedge t_4\wedge t_5\wedge  x_1\wedge x_2\wedge x_3\wedge x_4\wedge x_5\wedge x_6 \mapsto  t_1\wedge t_3\wedge t_4\wedge  x_1\wedge x_2\wedge t_5\wedge x_3\wedge x_4\wedge t_2\wedge x_5\wedge x_6$. We first would like to show that $(\mathrm{id}_{S^1}\wedge H(\xi))\circ (\mathrm{id}_{S^1}\wedge H(\xi)\wedge \mathrm{id}_{\mathrm{SL}_2})$ equals to the composite \[\begin{tikzcd}
	{S^1\wedge S^1\wedge\mathrm{SL}_2\wedge \mathrm{SL}_2\wedge \mathrm{SL}_2} \\
	{S^1\wedge\mathrm{SL}_2\wedge S^1\wedge\mathrm{SL}_2\wedge \mathrm{SL}_2} \\
	{S^1\wedge\mathrm{SL}_2\wedge S^1\wedge\mathrm{SL}_2} \\
	{S^1\wedge S^1\wedge\mathrm{SL}_2\wedge\mathrm{SL}_2} \\
	{S^1\wedge S^1\wedge\mathrm{SL}_2}
	\arrow["{\textcircled{2}}", from=1-1, to=2-1]
	\arrow["{\mathrm{id}_{S^1\wedge\mathrm{SL}_2}\wedge H(\xi)}", from=2-1, to=3-1]
	\arrow["{\textcircled{3}}", from=3-1, to=4-1]
	\arrow["{\mathrm{id}_{S^1}\wedge H(\xi)}", from=4-1, to=5-1]
\end{tikzcd}\] where the morphism $\textcircled{2}$ is given by  $t_1\wedge t_2\wedge  x_1\wedge x_2\wedge x_3 \mapsto  t_2\wedge x_1\wedge t_1\wedge  x_3\wedge x_3$ and $\textcircled{3}$ is defined by $t_1\wedge x_1\wedge  t_2\wedge x_2 \mapsto  t_2\wedge t_1\wedge x_1\wedge  x_2$. We first observe that this composite can also be written as \[\begin{tikzcd}
	{S^1\wedge S^1\wedge\mathrm{SL}_2\wedge \mathrm{SL}_2\wedge \mathrm{SL}_2} \\
	{S^1\wedge S^1\wedge\mathrm{SL}_2\wedge \mathrm{SL}_2\wedge \mathrm{SL}_2} \\
	{S^1\wedge\mathrm{SL}_2\wedge S^1\wedge\mathrm{SL}_2\wedge \mathrm{SL}_2} \\
	{S^1\wedge\mathrm{SL}_2\wedge S^1\wedge\mathrm{SL}_2} \\
	{S^1\wedge\mathrm{SL}_2\wedge S^1\wedge\mathrm{SL}_2} \\
	{S^1\wedge S^1\wedge\mathrm{SL}_2\wedge\mathrm{SL}_2} \\
	{S^1\wedge S^1\wedge\mathrm{SL}_2} & \cdot
	\arrow["\tau", from=1-1, to=2-1]
	\arrow["{{\textcircled{2}}'}", from=2-1, to=3-1]
	\arrow["{{\mathrm{id}_{S^1\wedge\mathrm{SL}_2}\wedge H(\xi)}}", from=3-1, to=4-1]
	\arrow["{\tau'}", from=4-1, to=5-1]
	\arrow["{{\textcircled{3}'}}", from=5-1, to=6-1]
	\arrow["{{\mathrm{id}_{S^1}\wedge H(\xi)}}", from=6-1, to=7-1]
\end{tikzcd}\] The morphisms $\tau$ and $\tau'$ are induced by the permutation of the two copies of $S^1$. The morphism $\textcircled{2}'$ is given by  $t_1\wedge t_2\wedge  x_1\wedge x_2\wedge x_3 \mapsto  t_1\wedge x_1\wedge t_2\wedge  x_3\wedge x_3$ and $\textcircled{3}'$ is given by $t_1\wedge x_1\wedge  t_2\wedge x_2 \mapsto  t_1\wedge t_2\wedge x_1\wedge  x_2$. It follows that this composite equals to $(\mathrm{id}_{S^1}\wedge H(\xi))\circ \textcircled{3}'\circ (\mathrm{id}_{S^1\wedge\mathrm{SL}_2}\wedge H(\xi))\circ \textcircled{2}' $. Hence we only need to show that the two composites $(\mathrm{id}_{S^1}\wedge H(\xi))\circ \textcircled{3}'\circ (\mathrm{id}_{S^1\wedge\mathrm{SL}_2}\wedge H(\xi))\circ \textcircled{2}' $ and $(\mathrm{id}_{S^1}\wedge H(\xi))\circ (\mathrm{id}_{S^1}\wedge H(\xi)\wedge \mathrm{id}_{\mathrm{SL}_2})$ coincide.\\

For this we reveal the morphisms involved in $(\mathrm{id}_{S^1}\wedge H(\xi))\circ (\mathrm{id}_{S^1}\wedge H(\xi)\wedge \mathrm{id}_{\mathrm{SL}_2})$ as follows \[\begin{tikzcd}
	{S^1\wedge S^1\wedge\mathrm{SL}_2\wedge \mathrm{SL}_2\wedge \mathrm{SL}_2} \\
	{S^1\wedge (\mathrm{SL}_2\ast\mathrm{SL}_2)\wedge \mathrm{SL}_2} \\
	{S^1\wedge S^1\wedge\mathrm{SL}_2\wedge \mathrm{SL}_2} \\
	{S^1\wedge (\mathrm{SL}_2\ast\mathrm{SL}_2)} \\
	{S^1\wedge S^1\wedge\mathrm{SL}_2} & \cdot
	\arrow["\sim", from=1-1, to=2-1]
	\arrow["{\mathrm{id}_{S^1}\wedge \tilde \xi\wedge \mathrm{id}_{\mathrm{SL}_2}}", from=2-1, to=3-1]
	\arrow["\sim", from=3-1, to=4-1]
	\arrow["{\mathrm{id}_{S^1}\wedge \tilde \xi}", from=4-1, to=5-1]
\end{tikzcd}\] Recall that $\tilde \xi$ is just the morphism induced by the multiplication map $\xi$ on $\mathrm{SL}_2$. Then we consider the diagram \[\begin{tikzcd}
	{S^1\wedge S^1\wedge\mathrm{SL}_2\wedge \mathrm{SL}_2\wedge \mathrm{SL}_2} && {\mathrm{SL}_2\ast(\mathrm{SL}_2\ast \mathrm{SL}_2)} \\
	{S^1\wedge (\mathrm{SL}_2\ast\mathrm{SL}_2)\wedge \mathrm{SL}_2} \\
	{S^1\wedge S^1\wedge\mathrm{SL}_2\wedge \mathrm{SL}_2} \\
	{S^1\wedge (\mathrm{SL}_2\ast\mathrm{SL}_2)} \\
	{S^1\wedge S^1\wedge\mathrm{SL}_2}
	\arrow["\sim", from=1-1, to=2-1]
	\arrow["\iota"', from=1-3, to=1-1]
	\arrow["{\iota'}", from=1-3, to=4-1]
	\arrow["{\mathrm{id}_{S^1}\wedge \tilde \xi\wedge \mathrm{id}_{\mathrm{SL}_2}}", from=2-1, to=3-1]
	\arrow["\sim", from=3-1, to=4-1]
	\arrow["{\mathrm{id}_{S^1}\wedge \tilde \xi}", from=4-1, to=5-1]
\end{tikzcd}\] where $\iota$ is given by $\overline{(s,A,(\overline{t,B,C}))}\mapsto s\wedge t\wedge A\wedge B\wedge C$ and $\iota'$ is induced by the pointed morphism $\mathrm{SL}_2\times (\mathrm{SL}_2\ast\mathrm{SL}_2); (A,\overline{(t,B,C)})\mapsto \overline{(t,\xi(A,B),C)}$. In particular, we want to prove that the diagram \[\begin{tikzcd}
{S^1\wedge S^1\wedge\mathrm{SL}_2\wedge \mathrm{SL}_2\wedge \mathrm{SL}_2} &&& {\mathrm{SL}_2\ast(\mathrm{SL}_2\ast \mathrm{SL}_2)} \\
{S^1\wedge (\mathrm{SL}_2\ast\mathrm{SL}_2)\wedge \mathrm{SL}_2} \\
{S^1\wedge S^1\wedge\mathrm{SL}_2\wedge \mathrm{SL}_2} && {S^1\wedge (\mathrm{SL}_2\ast\mathrm{SL}_2)}
\arrow["\iota"', from=1-4, to=1-1]
\arrow["{\iota'}", from=1-4, to=3-3]
\arrow["\sim", from=2-1, to=1-1]
\arrow["{{\mathrm{id}_{S^1}\wedge \tilde \xi\wedge \mathrm{id}_{\mathrm{SL}_2}}}", from=2-1, to=3-1]
\arrow["\sim"', from=3-3, to=3-1]
\end{tikzcd}\] commutes in $\mathcal{H}_{\bullet}(\mathbb{Z})$. In order to prove the commutativity we need here again the geometric realization functor developed in the author's doctoral thesis \cite[Section 2]{dong2024motivictodabrackets}. After applying the geometric realization functor $|\cdot|$ we obtain a diagram \[\begin{tikzcd}
{|S^1|\wedge |S^1|\wedge|\mathrm{SL}_2|\wedge |\mathrm{SL}_2|\wedge |\mathrm{SL}_2|} && {|\mathrm{SL}_2|\ast(|\mathrm{SL}_2|\ast |\mathrm{SL}_2|)} \\
{|S^1|\wedge (|\mathrm{SL}_2|\ast|\mathrm{SL}_2|)\wedge| \mathrm{SL}_2|} \\
{|S^1|\wedge |S^1|\wedge|\mathrm{SL}_2|\wedge |\mathrm{SL}_2|} && {|S^1|\wedge (|\mathrm{SL}_2|\ast|\mathrm{SL}_2|)}
\arrow["{{|\iota|}}"', from=1-3, to=1-1]
\arrow["{\sigma'}", from=1-3, to=2-1]
\arrow["{{|\iota'|}}", from=1-3, to=3-3]
\arrow["\sim", from=2-1, to=1-1]
\arrow["{{|{\mathrm{id}_{S^1}\wedge \tilde \xi\wedge \mathrm{id}_{\mathrm{SL}_2}}|}}", from=2-1, to=3-1]
\arrow["\sim"', from=3-3, to=3-1]
\end{tikzcd}\] where the weak equivalence $\sigma'$ is defined by
\begin{align*}
\overline{(c,A,  (\overline{b,B,C}))}\mapsto \begin{cases} cb\wedge \overline{(\frac{(1-b)c}{1-cb},A,B)}\wedge C & \text{if} \  c\neq 1 \ \text {or}  \ b\neq 1\ \\
\ast & \text{if} \ c= 1 \ \text {and}  \ b=1\
\end{cases} 
\end{align*} for all $A,B,C\in  \mathrm{SL}_2$. We notice that the pointed continuous map \begin{align*}
I/\partial I\wedge I/\partial I \rightarrow I/\partial I\wedge I/\partial I; c\wedge b\mapsto \begin{cases}
	cb\wedge\frac{(1-b)c}{1-cb}  & \text{if} \  c\neq 1 \ \text {or}  \ b\neq 1\ \\
	\ast & \text{if} \ c= 1 \ \text {and}  \ b=1\
\end{cases}
\end{align*}
is pointed homotopic to the identity. It follows that the \[\begin{tikzcd}
	{|S^1|\wedge |S^1|\wedge|\mathrm{SL}_2|\wedge |\mathrm{SL}_2|\wedge |\mathrm{SL}_2|} && {|\mathrm{SL}_2|\ast(|\mathrm{SL}_2|\ast |\mathrm{SL}_2|)} \\
	{|S^1|\wedge (|\mathrm{SL}_2|\ast|\mathrm{SL}_2|)\wedge| \mathrm{SL}_2|} \\
	{|S^1|\wedge |S^1|\wedge|\mathrm{SL}_2|\wedge |\mathrm{SL}_2|} && {|S^1|\wedge (|\mathrm{SL}_2|\ast|\mathrm{SL}_2|)}
	\arrow["{{|\iota|}}"', from=1-3, to=1-1]
	\arrow["{\sigma'}", from=1-3, to=2-1]
	\arrow["{{|\iota'|}}", from=1-3, to=3-3]
	\arrow["\sim", from=2-1, to=1-1]
	\arrow["{{|{\mathrm{id}_{S^1}\wedge \tilde \xi\wedge \mathrm{id}_{\mathrm{SL}_2}}|}}", from=2-1, to=3-1]
	\arrow["\sim"', from=3-3, to=3-1]
\end{tikzcd}\] commutes in the corresponding homotopy category. Thus the original diagram \[\begin{tikzcd}
{S^1\wedge S^1\wedge\mathrm{SL}_2\wedge \mathrm{SL}_2\wedge \mathrm{SL}_2} &&& {\mathrm{SL}_2\ast(\mathrm{SL}_2\ast \mathrm{SL}_2)} \\
{S^1\wedge (\mathrm{SL}_2\ast\mathrm{SL}_2)\wedge \mathrm{SL}_2} \\
{S^1\wedge S^1\wedge\mathrm{SL}_2\wedge \mathrm{SL}_2} && {S^1\wedge (\mathrm{SL}_2\ast\mathrm{SL}_2)}
\arrow["\iota"', from=1-4, to=1-1]
\arrow["{\iota'}", from=1-4, to=3-3]
\arrow["\sim", from=2-1, to=1-1]
\arrow["{{\mathrm{id}_{S^1}\wedge \tilde \xi\wedge \mathrm{id}_{\mathrm{SL}_2}}}", from=2-1, to=3-1]
\arrow["\sim"', from=3-3, to=3-1]
\end{tikzcd}\] is commutative in $\mathcal{H}_{\bullet}(\mathbb{Z})$. Finally we look at the composite $(\mathrm{id}_{S^1}\wedge H(\xi))\circ \textcircled{3}'\circ (\mathrm{id}_{S^1\wedge\mathrm{SL}_2}\wedge H(\xi))\circ \textcircled{2}'$. By revealing the morphisms involved in this composite we get the following  diagram \[\begin{tikzcd}
{S^1\wedge S^1\wedge\mathrm{SL}_2\wedge \mathrm{SL}_2\wedge \mathrm{SL}_2} && {(\mathrm{SL}_2\ast\mathrm{SL}_2)\ast \mathrm{SL}_2} \\
{S^1\wedge\mathrm{SL}_2\wedge S^1\wedge\mathrm{SL}_2\wedge \mathrm{SL}_2} \\
{S^1\wedge\mathrm{SL}_2\wedge (\mathrm{SL}_2\ast \mathrm{SL}_2)} \\
{S^1\wedge\mathrm{SL}_2\wedge S^1\wedge\mathrm{SL}_2} \\
{S^1\wedge S^1\wedge\mathrm{SL}_2\wedge\mathrm{SL}_2} \\
{S^1\wedge  (\mathrm{SL}_2\ast \mathrm{SL}_2)} \\
{S^1\wedge S^1\wedge\mathrm{SL}_2}
\arrow["{{{\textcircled{2}}'}}", from=1-1, to=2-1]
\arrow["\kappa"', from=1-3, to=1-1]
\arrow["{\kappa'}", curve={height=-24pt}, from=1-3, to=6-1]
\arrow["\sim", from=2-1, to=3-1]
\arrow["{\mathrm{id}_{S^1\wedge\mathrm{SL}_2}\wedge \tilde \xi}", from=3-1, to=4-1]
\arrow["{{{\textcircled{3}'}}}", from=4-1, to=5-1]
\arrow["\sim", from=5-1, to=6-1]
\arrow["{\mathrm{id}_{S^1}\wedge \tilde \xi}", from=6-1, to=7-1]
\end{tikzcd}\] $\kappa$ is given by $\overline{(s,(\overline{t,A,B}),C)}\mapsto s\wedge t\wedge A\wedge B\wedge C$ and $\kappa'$ is induced by the pointed morphism $(\mathrm{SL}_2\ast \mathrm{SL}_2)\times \mathrm{SL}_2; (\overline{(t,A,B)}, C)\mapsto \overline{(t,A,\xi(BC))}$. By exactly the same arguments involving the geometric realization functor as before we can show that the diagram \[\begin{tikzcd}
{S^1\wedge S^1\wedge\mathrm{SL}_2\wedge \mathrm{SL}_2\wedge \mathrm{SL}_2} && {(\mathrm{SL}_2\ast\mathrm{SL}_2)\ast \mathrm{SL}_2} \\
{S^1\wedge\mathrm{SL}_2\wedge S^1\wedge\mathrm{SL}_2\wedge \mathrm{SL}_2} \\
{S^1\wedge\mathrm{SL}_2\wedge (\mathrm{SL}_2\ast \mathrm{SL}_2)} \\
{S^1\wedge\mathrm{SL}_2\wedge S^1\wedge\mathrm{SL}_2} \\
{S^1\wedge S^1\wedge\mathrm{SL}_2\wedge\mathrm{SL}_2} \\
{S^1\wedge  (\mathrm{SL}_2\ast \mathrm{SL}_2)}
\arrow["{{{\textcircled{2}}'}}", from=1-1, to=2-1]
\arrow["\kappa"', from=1-3, to=1-1]
\arrow["{\kappa'}", curve={height=-24pt}, from=1-3, to=6-1]
\arrow["\sim", from=2-1, to=3-1]
\arrow["{\mathrm{id}_{S^1\wedge\mathrm{SL}_2}\wedge \tilde \xi}", from=3-1, to=4-1]
\arrow["{{{\textcircled{3}'}}}", from=4-1, to=5-1]
\arrow["\sim", from=5-1, to=6-1]
\end{tikzcd}\] commutes.\\

Next we use the geometric realization functor again and obtain a diagram \[\begin{tikzcd}
	{(|\mathrm{SL}_2|\ast|\mathrm{SL}_2|)\ast |\mathrm{SL}_2|} && {|\mathrm{SL}_2|\ast(|\mathrm{SL}_2|\ast |\mathrm{SL}_2|)} \\
	{|S^1|\wedge (|\mathrm{SL}_2|\ast |\mathrm{SL}_2|)} && {|S^1|\wedge (|\mathrm{SL}_2|\ast |\mathrm{SL}_2|)} \\
	{|S^1|\wedge |S^1|\wedge|\mathrm{SL}_2|}
	\arrow["{|\kappa'|}"', from=1-1, to=2-1]
	\arrow["{\sigma''}"', from=1-3, to=1-1]
	\arrow["{|\iota'|}", from=1-3, to=2-3]
	\arrow["{|\mathrm{id}_{S^1}\wedge \tilde \xi|}"', from=2-1, to=3-1]
	\arrow["{|\mathrm{id}_{S^1}\wedge \tilde \xi|}", from=2-3, to=3-1]
\end{tikzcd}\] where the weak equivalence $\sigma''$ is given by $\overline{(c,A,  (\overline{b,B,C}))}\mapsto \overline{(cb, \overline{(\frac{(1-b)c}{1-cb},A,B)},C})$ for all $c.b\in I/\partial I$ and $A,B,C\in  \mathrm{SL}_2$. Again we see that this diagram commutes by homotopy. Furthermore by the same arguments as before we also see that \[\begin{tikzcd}
	{(|\mathrm{SL}_2|\ast|\mathrm{SL}_2|)\ast |\mathrm{SL}_2|} && {|\mathrm{SL}_2|\ast(|\mathrm{SL}_2|\ast |\mathrm{SL}_2|)} \\
	& {|S^1|\wedge |S^1|\wedge|\mathrm{SL}_2|\wedge |\mathrm{SL}_2|\wedge |\mathrm{SL}_2|}
	\arrow["{|\kappa|}", from=1-1, to=2-2]
	\arrow["{\sigma''}"', from=1-3, to=1-1]
	\arrow["{|\iota|}"', from=1-3, to=2-2]
\end{tikzcd}\] commutes in the corresponding homotopy category. Altogether we get $|(\mathrm{id}_{S^1}\wedge H(\xi))\circ (\mathrm{id}_{S^1}\wedge H(\xi)\wedge \mathrm{id}_{\mathrm{SL}_2})|\circ |\iota|=|\iota'|\circ |\mathrm{id}_{S^1}\wedge \tilde \xi|=|\kappa'|\circ |\mathrm{id}_{S^1}\wedge \tilde \xi|\circ \sigma''= |(\mathrm{id}_{S^1}\wedge H(\xi))\circ \textcircled{3}'\circ (\mathrm{id}_{S^1\wedge\mathrm{SL}_2}\wedge H(\xi))\circ \textcircled{2}'|\circ|\kappa|\circ \sigma''=  |(\mathrm{id}_{S^1}\wedge H(\xi))\circ \textcircled{3}'\circ (\mathrm{id}_{S^1\wedge\mathrm{SL}_2}\wedge H(\xi))\circ \textcircled{2}'|\circ |\iota|  $. As $|\iota|$ is a weak equivalence, we obtain $|(\mathrm{id}_{S^1}\wedge H(\xi))\circ (\mathrm{id}_{S^1}\wedge H(\xi)\wedge \mathrm{id}_{\mathrm{SL}_2})|=|(\mathrm{id}_{S^1}\wedge H(\xi))\circ \textcircled{3}'\circ (\mathrm{id}_{S^1\wedge\mathrm{SL}_2}\wedge H(\xi))\circ \textcircled{2}'|$. It follows that the two composites $(\mathrm{id}_{S^1}\wedge H(\xi))\circ \textcircled{3}'\circ (\mathrm{id}_{S^1\wedge\mathrm{SL}_2}\wedge H(\xi))\circ \textcircled{2}' $ and $(\mathrm{id}_{S^1}\wedge H(\xi))\circ (\mathrm{id}_{S^1}\wedge H(\xi)\wedge \mathrm{id}_{\mathrm{SL}_2})$ coincide because the left derived functor $|\cdot|$ is an equivalence of the homotopy categories.\\

Recall that $(\mathrm{id}_{S^1}\wedge H(\xi))\circ \textcircled{3}'\circ (\mathrm{id}_{S^1\wedge\mathrm{SL}_2}\wedge H(\xi))\circ \textcircled{2}'$ 	is just the composite \[\begin{tikzcd}
	{S^1\wedge S^1\wedge\mathrm{SL}_2\wedge \mathrm{SL}_2\wedge \mathrm{SL}_2} \\
	{S^1\wedge\mathrm{SL}_2\wedge S^1\wedge\mathrm{SL}_2\wedge \mathrm{SL}_2} \\
	{S^1\wedge\mathrm{SL}_2\wedge S^1\wedge\mathrm{SL}_2} \\
	{S^1\wedge S^1\wedge\mathrm{SL}_2\wedge\mathrm{SL}_2} \\
	{S^1\wedge S^1\wedge\mathrm{SL}_2}
	\arrow["{\textcircled{2}}", from=1-1, to=2-1]
	\arrow["{\mathrm{id}_{S^1\wedge\mathrm{SL}_2}\wedge H(\xi)}", from=2-1, to=3-1]
	\arrow["{\textcircled{3}}", from=3-1, to=4-1]
	\arrow["{\mathrm{id}_{S^1}\wedge H(\xi)}", from=4-1, to=5-1]
\end{tikzcd}\] 
We precompose it with \[\begin{tikzcd}
{S^5\wedge \mathbb{G}_{m}^6} \\
{S^2\wedge S^1\wedge \mathbb{G}_{m}^2\wedge S^1\wedge \mathbb{G}_{m}^2\wedge S^1\wedge \mathbb{G}_{m}^2} \\
{S^1\wedge S^1\wedge\mathrm{SL}_2\wedge \mathrm{SL}_2\wedge \mathrm{SL}_2}
\arrow["{{{\textcircled{1}}}}", from=1-1, to=2-1]
\arrow["\sim", from=2-1, to=3-1]
\end{tikzcd}\] and would like to show that \[\begin{tikzcd}
{S^5\wedge \mathbb{G}_{m}^6} \\
{S^2\wedge S^1\wedge \mathbb{G}_{m}^2\wedge S^1\wedge \mathbb{G}_{m}^2\wedge S^1\wedge \mathbb{G}_{m}^2} \\
{S^1\wedge S^1\wedge\mathrm{SL}_2\wedge \mathrm{SL}_2\wedge \mathrm{SL}_2} \\
{S^1\wedge\mathrm{SL}_2\wedge S^1\wedge\mathrm{SL}_2\wedge \mathrm{SL}_2} \\
{S^1\wedge\mathrm{SL}_2\wedge S^1\wedge\mathrm{SL}_2} \\
{S^1\wedge S^1\wedge\mathrm{SL}_2\wedge\mathrm{SL}_2} \\
{S^1\wedge S^1\wedge\mathrm{SL}_2}
\arrow["{\textcircled{1}}", from=1-1, to=2-1]
\arrow["\sim", from=2-1, to=3-1]
\arrow["{{\textcircled{2}}}", from=3-1, to=4-1]
\arrow["{{\mathrm{id}_{S^1\wedge\mathrm{SL}_2}\wedge H(\xi)}}", from=4-1, to=5-1]
\arrow["{{\textcircled{3}}}", from=5-1, to=6-1]
\arrow["{{\mathrm{id}_{S^1}\wedge H(\xi)}}", from=6-1, to=7-1]
\end{tikzcd}\] is $-\nu_{3+(2)}\circ \nu_{4+(4)}$. By unraveling the definition of the morphisms in the composite above we see that this composite is equal to \[\begin{tikzcd}
{S^5\wedge \mathbb{G}_{m}^6} \\
{S^1\wedge S^1\wedge \mathbb{G}_{m}^2\wedge S^1\wedge S^1\wedge \mathbb{G}_{m}^2\wedge S^1\wedge \mathbb{G}_{m}^2} \\
{S^1\wedge S^1\wedge \mathbb{G}_{m}^2\wedge S^1\wedge S^1\wedge \mathbb{G}_{m}^2} \\
{S^1\wedge S^1\wedge S^1\wedge \mathbb{G}_{m}^2\wedge S^1\wedge \mathbb{G}_{m}^2} \\
{S^1\wedge S^1\wedge S^1\wedge \mathbb{G}_{m}^2}
\arrow["{\textcircled{1}'}", from=1-1, to=2-1]
\arrow["{\mathrm{id}_{S^1\wedge S^1\wedge \mathbb{G}_{m}^2}\wedge\nu_{2+(2)}}", from=2-1, to=3-1]
\arrow["{\tau''}", from=3-1, to=4-1]
\arrow["{{\mathrm{id}_{S^1}\wedge \nu_{2+(2)}}}", from=4-1, to=5-1]
\end{tikzcd}\] where $\textcircled{1}'$ is given by $t_1\wedge t_2\wedge t_3\wedge t_4\wedge t_5\wedge  x_1\wedge x_2\wedge x_3\wedge x_4\wedge x_5\wedge x_6 \mapsto  t_3\wedge t_4\wedge  x_1\wedge x_2\wedge t_1\wedge t_5\wedge x_3\wedge x_4\wedge t_2\wedge x_5\wedge x_6$ and $\tau''$ is defined by $t_1\wedge t_2\wedge   x_1\wedge x_2\wedge t_3\wedge t_4\wedge x_3\wedge x_4\mapsto t_3\wedge t_1\wedge t_2\wedge   x_1\wedge x_2\wedge t_4\wedge x_3\wedge x_4 $. In particular this composite can also be written as \[\begin{tikzcd}
{S^5\wedge \mathbb{G}_{m}^6} \\
{S^5\wedge \mathbb{G}_{m}^6} \\
{S^1\wedge S^1\wedge S^1\wedge \mathbb{G}_{m}^2\wedge S^1\wedge \mathbb{G}_{m}^2\wedge S^1\wedge \mathbb{G}_{m}^2} \\
{S^1\wedge S^1\wedge S^1\wedge \mathbb{G}_{m}^2\wedge  S^1\wedge \mathbb{G}_{m}^2} \\
{S^1\wedge S^1\wedge S^1\wedge \mathbb{G}_{m}^2\wedge  S^1\wedge \mathbb{G}_{m}^2} \\
{S^1\wedge S^1\wedge S^1\wedge \mathbb{G}_{m}^2} & \cdot
\arrow["\mu", from=1-1, to=2-1]
\arrow["{\textcircled{1}}", from=2-1, to=3-1]
\arrow["{\mathrm{id}_{S^1}\wedge \nu_{2+(2)}\wedge\mathrm{id}_{S^1\wedge \mathbb{G}_{m}^2}}", from=3-1, to=4-1]
\arrow["{\mu'}", from=4-1, to=5-1]
\arrow["{{\mathrm{id}_{S^1}\wedge \nu_{2+(2)}}}", from=5-1, to=6-1]
\end{tikzcd}\]
The morphism $\mu$ is given by $t_1\wedge t_2\wedge t_3\wedge t_4\wedge t_5\wedge  x_1\wedge x_2\wedge x_3\wedge x_4\wedge x_5\wedge x_6 \mapsto  t_3\wedge t_4\wedge t_1\wedge t_5\wedge t_2\wedge x_3\wedge x_4\wedge x_5\wedge x_6\wedge x_1\wedge x_2$. By Corollary~\ref{Corollary 2.3} it is equal to $-\mathrm{id}_{S^5\wedge \mathbb{G}_{m}^6}$. The morphism $\mu'$ is given by $t_1\wedge t_2\wedge t_3\wedge x_1\wedge x_2\wedge t_4\wedge x_3\wedge x_4 \mapsto t_2\wedge t_1\wedge t_4\wedge x_3\wedge x_4\wedge t_3\wedge x_1\wedge x_2$. It is equal to the identity. Especially, the composite \[\begin{tikzcd}
	{S^5\wedge \mathbb{G}_{m}^6} \\
	{S^1\wedge S^1\wedge S^1\wedge \mathbb{G}_{m}^2\wedge S^1\wedge \mathbb{G}_{m}^2\wedge S^1\wedge \mathbb{G}_{m}^2} \\
	{S^1\wedge S^1\wedge S^1\wedge \mathbb{G}_{m}^2\wedge  S^1\wedge \mathbb{G}_{m}^2} \\
	{S^1\wedge S^1\wedge S^1\wedge \mathbb{G}_{m}^2\wedge  S^1\wedge \mathbb{G}_{m}^2} \\
	{S^1\wedge S^1\wedge S^1\wedge \mathbb{G}_{m}^2}
	\arrow["{{\textcircled{1}}}", from=1-1, to=2-1]
	\arrow["{{\mathrm{id}_{S^1}\wedge \nu_{2+(2)}\wedge\mathrm{id}_{S^1\wedge \mathbb{G}_{m}^2}}}", from=2-1, to=3-1]
	\arrow["{{\mu'}}", from=3-1, to=4-1]
	\arrow["{{{\mathrm{id}_{S^1}\wedge \nu_{2+(2)}}}}", from=4-1, to=5-1]
\end{tikzcd}\] equals to $\nu_{3+(2)}\circ \nu_{4+(4)}$ as $\mu'$
is the identity. Altogether we conclude that $\nu_{3+(2)}\circ \nu_{4+(4)}=-\nu_{3+(2)}\circ \nu_{4+(4)}$.\\
\end{proof}

\begin{cor}
The Toda brackets  $\{\eta_{3+(2)}, \nu_{3+(3)}, \eta _{4+(5)} \},  \\ \{\nu_{3+(3)}, \eta _{4+(5)}, \nu _{4+(6)} \}$ and $\{\eta_{3+(2)}, (1-\epsilon)_{3+(3)}, \nu_{3+(3)}\circ \nu_{4+(5)}\}$ are well-defined. Moreover, they are non-trivial as their complex realizations are non-trivial.\\
	\end{cor}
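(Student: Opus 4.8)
The plan is to deduce well\nobreakdash-definedness directly from the relations established above, and then to obtain non\nobreakdash-triviality by transporting each bracket, along $\mathbb{P}^1$\nobreakdash-stabilization followed by complex realization, into a classical stable Toda bracket that is known to avoid $0$. For well\nobreakdash-definedness one only needs, for each triple $(\alpha,\beta,\gamma)$, that the two composites $\alpha\circ\beta$ and $\beta\circ\gamma$ are $\mathbb{A}^1$\nobreakdash-nullhomotopic. For $\{\eta_{3+(2)},\nu_{3+(3)},\eta_{4+(5)}\}$ these are Corollary~\ref{Corollary 3.6} ($\eta_{3+(2)}\circ\nu_{3+(3)}$) and Proposition~\ref{Proposition 3.7} ($\nu_{3+(3)}\circ\eta_{4+(5)}$). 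For $\{\nu_{3+(3)},\eta_{4+(5)},\nu_{4+(6)}\}$ one side is again Proposition~\ref{Proposition 3.7}; the other, $\eta_{4+(5)}\circ\nu_{4+(6)}$, is the smash product of the nullhomotopic composite $\eta_{3+(2)}\circ\nu_{3+(3)}$ with $\mathrm{id}_{S^1\wedge\mathbb{G}_m^3}$, hence nullhomotopic. For $\{\eta_{3+(2)},(1-\epsilon)_{3+(3)},\nu_{3+(3)}\circ\nu_{4+(5)}\}$ the composite $\eta_{3+(2)}\circ(1-\epsilon)_{3+(3)}=\eta_{3+(2)}\circ h_{3+(3)}$ is a simplicial suspension of the relation $\eta_{1+(2)}\circ h_{1+(3)}=0$ of \cite[Proposition 3.4.9]{dong2024motivictodabrackets}, while the composite $(1-\epsilon)_{3+(3)}\circ\nu_{3+(3)}\circ\nu_{4+(5)}$ equals $2\cdot\nu_{3+(3)}\circ\nu_{4+(5)}$ once one uses $\epsilon_{3+(3)}\circ\nu_{3+(3)}=-\nu_{3+(3)}$ (the corollary to Proposition~\ref{Proposition 3.7}), and this in turn is the $\mathbb{G}_m$\nobreakdash-suspension of $2\cdot\nu_{3+(2)}\circ\nu_{4+(4)}=0$ from Proposition~\ref{Proposition 3.9}. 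Hence all three brackets are defined.

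For non\nobreakdash-triviality I would argue contrapositively. Suppose one of these brackets contained the class of a constant morphism. The $\mathbb{P}^1$\nobreakdash-stabilization functor $\mathcal{H}_{\bullet}(\mathbb{Z})\to\mathcal{SH}(\mathbb{Z})$ and the complex realization functor $\mathcal{SH}(\mathbb{Z})\to\mathcal{SH}$ both preserve the cone construction and the homotopy cofibre sequences used to form Toda brackets, so their composite sends the unstable motivic bracket into the corresponding classical stable Toda bracket and the constant morphism to $0$. One now identifies the realizations of the entries: since $\mathbb{G}_m(\mathbb{C})\simeq S^1$ we get $\mathrm{Re}_{\mathbb{C}}(S^{s+(w)})\simeq S^{s+w}$, and under this identification every $\eta_{a+(b)}$ realizes to a suspension of the classical Hopf map $\eta\in\pi_1^s$, every $\nu_{a+(b)}$ to a suspension of $\nu\in\pi_3^s$, and $(1-\epsilon)_{a+(b)}$ to multiplication by $2$, because $\epsilon_{(1)}\colon x\mapsto x^{-1}$ realizes to the degree $-1$ self\nobreakdash-map of $S^1$. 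Therefore the images of the three motivic brackets lie in the classical stable Toda brackets $\langle\eta,\nu,\eta\rangle\subseteq\pi_6^s$, $\langle\nu,\eta,\nu\rangle\subseteq\pi_8^s$ and $\langle\eta,2,\nu^2\rangle\subseteq\pi_8^s$, and each of these is classically a non\nobreakdash-trivial coset that does not contain $0$ --- for instance $\pi_5^s=0$ makes the indeterminacy of $\langle\eta,\nu,\eta\rangle$ vanish, leaving the single non\nobreakdash-zero element $\nu^2$. This contradiction shows that the complex realizations, and hence the motivic brackets, are non\nobreakdash-trivial.

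The conceptual input is modest, and the labour is in two bookkeeping points. First, one must check that $\eta_{4+(5)}\circ\nu_{4+(6)}$, $\eta_{3+(2)}\circ h_{3+(3)}$ and $\nu_{3+(3)}\circ\nu_{4+(5)}$ genuinely arise as smash products of the already\nobreakdash-treated composites with identity maps of spheres, under the standing reindexing conventions, so that the suspension arguments are legitimate. Second, one must make precise the compatibility of $\mathbb{P}^1$\nobreakdash-stabilization and of complex realization with the unstable Toda bracket of \cite[Section 2.2]{dong2024motivictodabrackets}, so that the chain of coset inclusions runs in the direction needed above. I expect the genuinely delicate step to be the final one: identifying the precise classical stable Toda brackets in question and verifying that their indeterminacy subgroups --- which are small here, since the adjacent stems $\pi_4^s$ and $\pi_5^s$ vanish or are controlled --- do not already contain $0$, for this is exactly what certifies that the realizations fail to be trivial.
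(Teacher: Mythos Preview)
Your treatment of well-definedness is correct and in fact more explicit than the paper's, which simply takes the nullhomotopies as given from the preceding results. For non-triviality, however, you take a genuinely different route. The paper applies the \emph{unstable} complex realization functor directly (citing \cite[Section A.4]{Panin2009}) and lands in unstable topological Toda brackets such as $\{\Sigma^3\eta_{\mathrm{top}}, \Sigma^2\nu_{\mathrm{top}}, \Sigma^7\eta_{\mathrm{top}}\}$, then invokes specific results from Toda's book (\cite[Lemma 5.12, Lemma 6.2, Theorem 7.1, (6.1)]{Toda+1963}) for the non-triviality. You instead pass first through $\mathbb{P}^1$-stabilization to $\mathcal{SH}(\mathbb{Z})$ and then through stable complex realization to $\mathcal{SH}$, reducing to the stable brackets $\langle\eta,\nu,\eta\rangle$, $\langle\nu,\eta,\nu\rangle$, $\langle\eta,2,\nu^2\rangle$. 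Your route is logically sound---both stabilization and stable realization are exact functors on cofibre sequences, so the inclusion of brackets you need holds---and it has the advantage that the stable indeterminacies are easier to bound (as you note, $\pi_4^s=\pi_5^s=0$ kills most of them). The paper's route is shorter: it avoids the extra stabilization step and the need to independently verify the stable computations, since Toda's unstable results can be cited off the shelf. What you flag as the ``genuinely delicate step''---confirming $0\notin\langle\eta,2,\nu^2\rangle$ stably, where the indeterminacy $\eta\pi_7^s$ is nonzero---is precisely what the paper sidesteps by staying unstable and citing Toda directly.
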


\begin{proof}
Let $\eta_{\mathrm{top}}$ denote the first topological Hopf map and $\nu_{\mathrm{top}}$ the second topological Hopf map. The complex realization functor (see \cite[Section A.4]{Panin2009}) sends the motivic Toda bracket  $\{\eta_{3+(2)}, \nu_{3+(3)}, \eta _{4+(5)} \}$ to the topological Toda bracket $\{\Sigma^3\eta_{\mathrm{top}}, \Sigma^2\nu_{\mathrm{top}}, \Sigma^7\eta _{\mathrm{top}} \}$. By \cite[Lemma 5.12]{Toda+1963} and the remark next to this lemma the topological Toda bracket \begin{align*}
\{\Sigma^3\eta_{\mathrm{top}}, \Sigma^2\nu_{\mathrm{top}}, \Sigma^7\eta _{\mathrm{top}} \}
\end{align*} is not trivial. Similarly, the complex realization functor sends the motivic Toda bracket $\{\nu_{3+(3)}, \eta _{4+(5)}, \nu _{4+(6)} \}$ to the topological Toda bracket $\{\Sigma^2\nu_{\mathrm{top}}, \Sigma^7\eta _{\mathrm{top}}, \Sigma^6\nu _{\mathrm{top}} \}$. By \cite[Lemma 6.2, Theorem 7.1]{Toda+1963} this topological Toda bracket is again not trivial. Finally the complex realization of \begin{align*}
\{\eta_{3+(2)}, (1-\epsilon)_{3+(3)}, \nu_{3+(3)}\circ \nu_{4+(5)}\}
\end{align*} is the topological Toda bracket $\{\Sigma^3\eta_{\mathrm{top}}, 2\mathrm{id}_{S^6}, \Sigma^2\nu_{\mathrm{top}}\circ \Sigma^5\nu_{\mathrm{top}}\}$. By \cite[(6.1)]{Toda+1963} this topological Toda bracket is not trivial.
\end{proof}

			\bibliographystyle{alpha} 
	\addcontentsline{toc}{section}{References}
	\bibliography{references} 
	\end{document}